 \newcommand{\co}{\colon\thinspace}
\newtheorem{theor}{Theorem}[section]
\newtheorem{cor}[theor]{Corollary}
\newtheorem{lemma}[theor]{Lemma}
\newtheorem{prop}[theor]{Proposition}
 \newtheorem{theorintro}{Theorem}
    \newtheoremstyle{TheoremNum}
        {\topsep}{\topsep}              
        {\itshape}                      
        {}                              
        {\bfseries}                     
        {.}                             
        { }                             
        {\thmname{#1}\thmnote{ \bfseries #3}}
    \theoremstyle{TheoremNum}
\newtheorem{duplicate}{Theorem}
\theoremstyle{definition}
\newtheorem{defin}[theor]{Definition}
\newtheorem{rmk}[theor]{Remark}
\DeclareMathOperator{\mcg}{MCG}
\DeclareMathOperator{\Homeo}{Homeo}
\DeclareMathOperator{\ai}{\hat{\iota}}
\DeclareMathOperator{\R}{\mathbb{R}}
\DeclareMathOperator{\Z}{\mathbb{Z}}
\DeclareMathOperator{\N}{\mathbb{N}}
\DeclareMathOperator{\Sp}{Sp}
\DeclareMathOperator{\Aut}{Aut}
\DeclareMathOperator{\Ends}{Ends}
\DeclareMathOperator{\H1}{H_1}
\DeclareMathOperator{\C1}{H^1}
\DeclareMathOperator{\filt}{\mathcal{F}}
\DeclareMathOperator{\supp}{supp}
\DeclareMathOperator{\lends}{\mathcal{L}}
\DeclareMathOperator{\rends}{\mathcal{R}}
\DeclareMathOperator{\ssm}{\smallsetminus}
\newcommand{\st}{\,|\,}
\DeclareMathOperator{\hs}{\mathcal{FS}}
\DeclareMathOperator{\Span}{\rm{Span}}
\title{Big mapping class groups acting on homology}
\author{Federica Fanoni}
\address{Institut de Recherche Math\'ematique Avanc\'ee\\UMR 7501, Universit\'e de Strasbourg et CNRS\\7 rue Ren\'e Descartes\\67000 Strasbourg, France}
\email{federica.fanoni@gmail.com}
\author{Sebastian Hensel}
\address{Mathematisches Institut der Universit\"at M\"unchen\\Theresienstr. 39\\D-80333 M\"unchen, Germany}
\email{hensel@math.lmu.de}
\author{Nicholas G.~Vlamis}
\address{CUNY Queens College, 65-30 Kissena Blvd, Flushing, NY 11367}
\email{nicholas.vlamis@qc.cuny.edu}
\date{\today}
\begin{document}
\maketitle

\begin{abstract} 
We study the action of (big) mapping class groups on the first homology of the corresponding surface. We give a precise characterization of the image of the induced homology representation.
\end{abstract}

\section{Introduction}

Surfaces are among the most basic and most fundamental objects in
geometry and topology. Although, as spaces, they may seem simple to
understand, their symmetries -- mapping classes -- certainly are not.

Given a surface $S$, a first approach to understand its mapping class group $\mcg(S)$ is to consider the natural action on the first homology $\H1(S;\Z)$.
This leads to the \emph{homology representation}
\[\rho_S\co \mcg(S) \to \Aut(\H1(S;\Z)). \]
For a surface $S$ of finite genus $g\geq 1$ with at most one puncture, it is well known that the elements in the image of $\rho_S$ are precisely those which preserve the algebraic intersection form $\ai$ (first shown by \cite{Burkhardt_Grundzuege}, see \cite[Chapter 6]{FM_Primer} for a discussion of the result). 
Usually, this is phrased
as saying that $\rho_S\co\mcg(S)\to\Sp(2g;\Z)$ is surjective (as $\ai$ is a symplectic pairing for such a surface).

\smallskip In this article, we determine the image of $\rho_S$ for any surface and,
in particular, those of infinite type. The first case is that of the Loch Ness monster surface (i.e.\ the surface of infinite genus and one end). Here, the
situation is very similar to the closed case:

\begin{theorintro}\label{monster}
Let $S$ be the Loch Ness monster surface. The image of $\rho_S$ is the group of automorphisms of $\H1(S;\Z)$ that preserve the algebraic intersection form.
\end{theorintro}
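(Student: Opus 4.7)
One inclusion is immediate: the algebraic intersection form is topologically invariant, so $\rho_S(\mcg(S))$ preserves $\ai$. For the other inclusion, the plan is to establish an infinite-genus analogue of the classical ``change of coordinates'' principle, namely that any two geometrically-realized symplectic bases of $\H1(S;\Z)$ are related by a homeomorphism of $S$.

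First I will fix a standard geometric symplectic basis: an exhausting chain of compact subsurfaces $S_1 \subset S_2 \subset \cdots$, each $S_n$ of genus $n$ with one boundary component, so that each ``new handle'' $S_n \ssm \Int(S_{n-1})$ is a twice-bordered torus containing simple closed curves $\alpha_n, \beta_n$ meeting transversely once. Then $\{[\alpha_i], [\beta_i]\}_{i \ge 1}$ is a symplectic basis of $\H1(S;\Z)$. Given $\phi$ preserving $\ai$, the goal will be to produce a homeomorphism $f \co S \to S$ with $f_*[\alpha_n] = \phi[\alpha_n]$ and $f_*[\beta_n] = \phi[\beta_n]$ for every $n$.

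I will build $f$ as a limit of partial homeomorphisms $f_n \co S_n \to \Sigma_n$, where $\Sigma_n \subset S$ is a genus-$n$ one-boundary subsurface containing simple closed curves $\alpha_i', \beta_i'$ ($i \le n$) with $[\alpha_i'] = \phi[\alpha_i]$ and $[\beta_i'] = \phi[\beta_i]$, arranged in the same chain pattern, with $f_n(\alpha_i) = \alpha_i'$, $f_n(\beta_i) = \beta_i'$. At the inductive step: because $\phi$ preserves $\ai$, the classes $\phi[\alpha_n], \phi[\beta_n]$ are symplectically orthogonal to the finite-rank subspace $\H1(\Sigma_{n-1}) \subset \H1(S)$, and hence lift to $\H1(S \ssm \Sigma_{n-1})$. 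I realize them in this complement by a pair of simple closed curves $\alpha_n', \beta_n'$ meeting once, using that a primitive homology class on a positive-genus surface is represented by a simple closed curve and that an algebraically-intersecting pair can be arranged to meet geometrically once. A regular neighborhood defines the new handle of $\Sigma_n$, and the classical change of coordinates on a twice-bordered torus extends $f_{n-1}$ across the new handle of $S_n$ to give $f_n$ matching boundaries with $f_{n-1}$.

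The main obstacle is ensuring $\bigcup_n \Sigma_n = S$, so that $f = \lim f_n$ is a homeomorphism of $S$ rather than merely an embedding with proper image. I plan to handle this by a back-and-forth enhancement of the induction: at each step, before proceeding, I will simultaneously enlarge $S_n$ and $\Sigma_n$ by a common homeomorphic ``filler'' piece to swallow prescribed points from countable compact exhaustions of $S$. The classification of one-ended infinite-genus surfaces by their space of ends guarantees that the complements at each stage remain homeomorphic -- all are Loch Ness monsters with a disk removed -- so the needed homeomorphic extensions exist and the induction can always be continued. The limit homeomorphism $f$ then satisfies $\rho_S(f) = \phi$, completing the proof.
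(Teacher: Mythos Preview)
Your overall strategy matches the paper's: exhaust $S$ by one-holed compact subsurfaces, inductively realize $\phi$-images of a symplectic basis by curves with the standard intersection pattern in the complement of what has already been built, and use a back-and-forth device to force the target subsurfaces to exhaust $S$. You have also correctly identified that this last point is the only real obstacle.

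There is, however, a gap in your back-and-forth step as written. You propose to enlarge $\Sigma_n$ to swallow a prescribed point and then enlarge $S_n$ by a ``common homeomorphic filler piece,'' justified by the observation that both complements are abstractly homeomorphic to a Loch Ness monster with a disk removed. But an arbitrary homeomorphic extension across such a filler will not act as $\phi$ on the new homology carried by the filler, and then the limit map cannot induce $\phi$. The classification of surfaces tells you only that \emph{some} extension exists, not one compatible with $\phi$; the appeal to classification does not supply the needed compatibility.

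The paper's implementation makes the alternation symmetric in $\phi$ and $\phi^{-1}$, and this is exactly the missing ingredient. It fixes a single exhaustion $\{\Sigma_k\}$ and builds two dynamic sequences $\{A_n\}, \{B_n\}$ with homeomorphisms $f_n\co A_n\to B_n$ acting as $\phi$ on homology. At one parity it sets $A_{n+1}$ equal to a large $\Sigma_m$ and realizes the new classes, pushed forward by $\phi$, inside $S\ssm B_n$ to obtain $B_{n+1}$; at the other parity it sets $B_{n+1}$ equal to a large $\Sigma_m$ and realizes the new classes, pulled back by $\phi^{-1}$, inside $S\ssm A_n$ to obtain $A_{n+1}$. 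Neither sequence is the fixed exhaustion, but each contains it cofinally, so both exhaust $S$ and the direct limit is a genuine homeomorphism. In your language: the source-side filler is not chosen abstractly but is dictated by applying $\phi^{-1}$ to the homology of the target-side enlargement and then realizing those classes geometrically.
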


As the Loch Ness monster surface is one-ended, \( \ai \) is symplectic and Theorem~\ref{monster} is equivalent to saying that the natural homomorphism \(  \mcg(S) \to \Sp(\N;\Z) \) is surjective (see Section~\ref{sec:ness} for more details).

\smallskip For more general surfaces, the situation is more
complicated.  For finite-type surfaces, the mapping class group
permutes the punctures (and therefore the homology classes they
define). For an infinite-type surface, one similarly has to encode the
structure of the ends of $S$ in homology to capture the action of the
mapping class group on ends. We do this by defining the \emph{homology
  end filtration} $\filt$ of $\H1(S;\Z)$.
It consists of the collection of the homologies of
unbounded subsurfaces with a single boundary component\footnote{with an extra technical condition -- see Definitions \ref{defin:subs} and \ref{defin:filt}.}. Further, 
for a homology class $[\delta]$ defined by a separating, oriented, simple,
closed curve, we denote by $\lends([\delta])$ the set of ends of $S$ to
the left of $\delta$ (this is well-defined by Lemma~\ref{lem:twosep}).

With this terminology, we can state our main result as follows.
\begin{theorintro}\label{mainthm}
  Let $S$ be an infinite-type surface, different from the Loch Ness monster and the once-punctured Loch Ness monster.  If
  $\phi$ is an automorphism of $\H1(S;\Z)$ preserving both $\ai$ and $\filt$,
  then the following hold:
  \begin{enumerate}[i)]
  \item Exactly one of $\phi$ and $-\phi$ lies in the image of $\rho_S$.
  \item $\phi$ preserves homology classes defined by separating simple
    closed curves.
  \item $\phi$ determines a homeomorphism $f_\phi$ of the space of ends of $S$, 
    and $\phi$ lies in the image of $\rho_S$ exactly if
    \[ f_\phi(\lends([\delta])) = \lends(\phi([\delta])) \]
    for some (hence any) simple separating closed curve $\delta$ which is non-trivial in 
    $H_1(S;\Z)$.
  \end{enumerate}
\end{theorintro}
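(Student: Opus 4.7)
My plan is to first use preservation of $\filt$ to extract a homeomorphism $f_\phi$ of $\Ends(S)$, then use $\hat\iota$-preservation to control $\phi$'s action on separating classes, and finally apply Theorem~\ref{monster} piece-by-piece to realize $\phi$ (or $-\phi$) as the action of a homeomorphism of $S$. Every element of $\filt$ is the first homology of an unbounded one-boundary-component subsurface $\Sigma \subseteq S$, and such a $\Sigma$ is determined up to finite data by its clopen set of ends $E(\Sigma) \subseteq \Ends(S)$. The inclusion--intersection combinatorics of $\filt$ recover the poset $\{E(\Sigma)\}$, which generates the topology on $\Ends(S)$; hence $\filt$-preservation induces a permutation of this clopen basis that extends uniquely to a homeomorphism $f_\phi$ of $\Ends(S)$.

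For (ii), separating classes admit a characterization purely in terms of $\filt$ and $\hat\iota$ (roughly, as primitive $\hat\iota$-radical classes lying in some $V \in \filt$, or equivalently as boundary classes of subsurfaces defining $\filt$); this description is preserved by $\phi$. Consequently, for any non-trivial separating class $[\delta]$, the image $\phi([\delta])$ is again separating, and the two sides of the separating curve it represents must coincide, as clopen subsets of $\Ends(S)$, with the $f_\phi$-images of the two sides of $\delta$. Therefore exactly one of $\lends(\phi([\delta])) = f_\phi(\lends([\delta]))$ or $\lends(\phi([\delta])) = f_\phi(\rends([\delta]))$ holds, defining a sign $\epsilon(\delta) \in \{\pm 1\}$. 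A direct argument using $\hat\iota$-preservation (alternatively, connectedness of a chain of nested separating classes) shows $\epsilon(\delta)$ is independent of $\delta$, giving the ``some (hence any)'' clause in (iii); replacing $\phi$ with $-\phi$ reverses the orientation of every separating class and so flips $\epsilon$.

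After possibly replacing $\phi$ by $-\phi$, assume $\epsilon = +1$. Choose a locally finite family of disjoint non-trivial separating simple closed curves $\{\delta_i\}$ cutting $S$ into unbounded one-boundary-component pieces $\Sigma_j$, each a Loch Ness monster with boundary (up to capping or puncture). By the previous step $\phi$ sends each $[\delta_i]$ to the class of some separating curve $\delta_i'$ and matches $\{\Sigma_j\}$ with the analogous decomposition $\{\Sigma_j'\}$ compatibly with $f_\phi$. Apply Theorem~\ref{monster} to each capped piece to obtain a homeomorphism $f_j \co \Sigma_j \to \Sigma_{\sigma(j)}'$ realizing the restriction of $\phi$ on $H_1(\Sigma_j;\Z)$. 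The condition $\epsilon = +1$ is precisely what forces boundary orientations to match, so the $f_j$ glue (modulo boundary Dehn twists, which act trivially on homology) to a global homeomorphism $f$ of $S$ with $\rho_S(f) = \phi$. Necessity of the compatibility condition in (iii), and hence uniqueness of the realized sign in (i), is immediate since any orientation-preserving homeomorphism must preserve the left-right convention on separating curves.

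The hardest part is the sign/gluing analysis: establishing that $\epsilon(\delta)$ is globally constant and that the local Theorem~\ref{monster}-realizations on the $\Sigma_j$ can be orchestrated into a single homeomorphism of $S$ exactly when $\epsilon = +1$. The exclusion of the (once-punctured) Loch Ness monster is necessary because only outside these cases does $S$ contain a sufficient supply of non-trivial separating classes both to run the $\epsilon$-independence argument and to perform the decomposition-into-pieces argument.
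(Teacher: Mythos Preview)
Your outline for (ii), (iii), and the sign argument tracks the paper closely: $f_\phi$ comes from ultrafilters of $\filt$ (Proposition~\ref{prop:end-homeo}), preservation of simple isotropics is Proposition~\ref{prop:simple-preserving}, and global constancy of $\epsilon$ is Lemma~\ref{lem:preserving-is}.

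The realization step, however, has a genuine gap. Your proposed decomposition --- disjoint separating curves $\{\delta_i\}$ cutting $S$ into unbounded pieces each with a \emph{single} boundary component and each a Loch Ness monster with boundary --- does not exist in general. If every complementary piece has exactly one boundary curve, an elementary count forces there to be at most one $\delta_i$ and two pieces; and even then the pieces need not be Loch Ness monsters: the theorem covers, for instance, infinite-type planar surfaces and finite-genus surfaces with infinitely many ends, where no subsurface carries infinite genus, so Theorem~\ref{monster} is simply unavailable on the pieces. There is also a second issue you gloss over: even granting some decomposition $\{\Sigma_j\}$, preservation of $\filt$ only gives $\phi(\H1(\Sigma_j;\Z)) = \H1(Y_j;\Z)$ for \emph{some} flare surfaces $Y_j$, and arranging these $Y_j$ to be pairwise disjoint and to tile $S$ is real work (this is precisely where Proposition~\ref{prop:realize-nested} is needed). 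The paper avoids both problems by not attempting a global cut-and-glue at all; instead it runs the Richards-style alternating exhaustion from the proof of Theorem~\ref{monster} directly on $S$ (this is Theorem~\ref{thm:iff}), building star surfaces $A_k, B_k$ and homeomorphisms $f_k \co A_k \to B_k$ whose direct limit realizes $\phi$, an argument that handles all topological types of $S$ uniformly.
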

Actually, we can show that the theorem holds for finite-type surfaces with at least four punctures. Furthermore, we can also characterize the image of $\rho_S$ in the case of the once-punctured Loch Ness monster (see Section \ref{sec:ness}).

\smallskip We emphasize that even the proof of Theorem~\ref{monster} already requires ideas not necessary in the finite-type case. Namely, in the classical case one starts with a collection of simple closed curves $\alpha_i, \beta_i$ intersecting in a standard pattern and realizes the classes $\phi([\alpha_i]), \phi([\beta_i])$ with the correct intersection pattern; it is then easy to construct a mapping class with the correct action. In the Loch Ness monster case, to follow this approach one also needs to realize the classes $\phi([\alpha_i]), \phi([\beta_i])$ by curves not accumulating in any compact subset of $S$. To take care of this, we adapt an argument of Richards \cite{Richards_Classification}; the details are discussed in Section \ref{sec:ness}. 

 To prove Theorem~\ref{mainthm}, the first step is to show that (under the given assumption on the surface) ultrafilters of $\filt$ are in correspondence with the ends of the surface (Lemma \ref{lem:ultrafilters}).
It follows that an automorphism $\phi$ preserving $\filt$ induces a permutation of its ultrafilters and hence a map $f_\phi$ of $\Ends(S)$ (Proposition \ref{prop:end-homeo}).

The second step is to deal with homology classes of separating simple closed curves. We note that two such curves induce the same class in homology if and only if the set of ends to the left of one is the same as the set of ends to the left of the other (Lemma \ref{lem:twosep}). Furthermore, we can show that these classes can be detected using $\filt$ (Proposition \ref{prop:simple-preserving}), which implies that they are permuted by any $\phi$ satisfying the hypotheses of the theorem.

To finish the proof, we use again a variation of the same argument of Richards that we employ for the Loch Ness monster case. While the structure of this step is the same in both cases, having to deal with more ends renders the proof less transparent.

\smallskip A natural complement of our study is the investigation of
the kernel of $\rho_S$, called the \emph{Torelli group} of $S$. For
finite-type surface this has been the subject of a sizeable amount of
research (the survey \cite{JohnsonSurvey} gives an
excellent overview over the by-now classical theory). In recent years,
more progress has been made, and the Torelli group is by now fairly
well understood.

Recently, the Torelli group has been investigated
for infinite-type surfaces as well by Aramayona, Ghaswala, Kent,
McLeay, Tao and Winarski \cite{AramayonaBig}. Among the results they
obtain, they characterize which elements belong to this subgroup by
showing that the Torelli group of an infinite-type surface is
topologically generated by its compactly-supported elements and hence
by separating Dehn twists and bounding pair maps.

\subsection{Necessity of the conditions}
In this section we will discuss how all the conditions in Theorem \ref{mainthm} are necessary, by providing examples of automorphisms not induced by mapping classes where one of the hypotheses is not satisfied.

\smallskip Already finite-type surfaces with punctures show that
preserving the algebraic intersection pairing is not sufficient to
guarantee realizability. Indeed, mapping classes of the closed genus-\( g \) surface with \( n \) punctures permute
the punctures, and therefore the mapping class group acts on the
isotropic subspace as a permutation representation
and this fact is not seen by \( \ai \).

\smallskip More interesting examples can be constructed on Jacob's ladder surface, i.e.~the two-ended infinite-genus surface with no planar ends. We consider the homology basis given by the curves depicted in Figure \ref{fig:basisJacob}.
\begin{figure}[t]
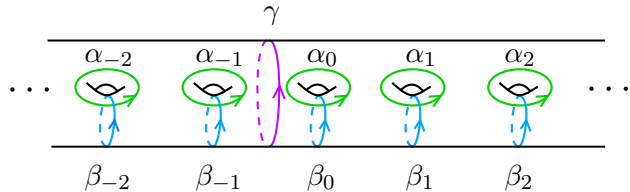

	\vspace*{.5cm}
	\begin{overpic}{basisJacob}
		\put(12,14){$\alpha_{-2}$}
		\put(30,14){$\alpha_{-1}$}
		\put(48,14){$\alpha_{0}$}
		\put(64,14){$\alpha_{1}$}
		\put(80,14){$\alpha_{2}$}
		\put(41,21){$\gamma$}
		\put(12,-6){$\beta_{-2}$}
		\put(30,-6){$\beta_{-1}$}
		\put(48,-6){$\beta_{0}$}
		\put(64,-6){$\beta_{1}$}
		\put(80,-6){$\beta_{2}$}
	\end{overpic}
	\vspace*{.5cm}
	\caption{Curves for a homology basis of Jacob's ladder}\label{fig:basisJacob}
\end{figure}

Consider the automorphism $\phi_1$ fixing $[\gamma]$ and $[\alpha_i],[\beta_i]$ for $i$ even and sending $[\alpha_i],[\beta_i]$ to $[\alpha_{-i}],[\beta_{-i}]$, respectively, for $i$ odd.

The automorphism \( \phi_1 \) cannot be realized by a mapping class because 
the sequence of curves \( \{\alpha_n\}_{n\in \N} \) exit one end, but the representatives of the images under \( \phi_1 \) accumulate to both ends.
One can check that that $\phi_1$ does not preserve the homology end filtration: more precisely, it can be proved that if we denote by $X$ the subsurface to the left of $\gamma$, then $\phi_1(\H1(X;\Z))$ is not in $\filt$.
This example shows how the homology end filtration is important to control which ends are accumulated by non-isotropic vectors.

Next, consider the automorphism $\phi_2$ which fixes all basis elements except for $[\gamma]$, which is mapped to $-[\gamma]$. 
Again, $\phi_2$ cannot be induced by a mapping class. This time it is because the sequence of curves \( \{\alpha_n\}_{n\in\N} \) exit the end to the right of $\gamma$, but to the left of any representative of $-[\gamma]$ (e.g.~ $\gamma$ with the opposite orientation).  In terms of the condition in Lemma \ref{lem:single-coherence},
$$f_{\phi_2}(\lends([\gamma]))\neq \lends (\phi_2([\gamma])).$$
On the other hand, $-\phi_2$ is induced by a mapping class (the involution which can be informally described as the rotation of angle $\pi$ around an axis joining the two ends of $S$, see Figure \ref{fig:rotation}).
\begin{figure}[ht]
\includegraphics{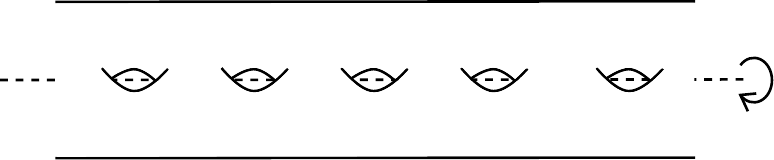}
\caption{A mapping class inducing $-\phi_2$}\label{fig:rotation}
\end{figure}

Note that it is also not enough to require that algebraic intersection and topological type of curves be preserved (where by this we mean that the image of the class of a simple closed curve is the class of a simple closed curve in the same mapping class group orbit), as shown by $\phi_1$ and $\phi_2$.

Finally, one could wonder if there is a characterization of the image of $\rho_S$ in terms of the set of simple isotropic classes instead of the homology end filtration; we comment on this in Section \ref{sec:charwithis}.

\subsection{Structure of the paper}
After some preliminaries about surfaces and their homology (Section \ref{sec:prelim}), we deal with the case of the Loch Ness monster with at most one puncture in Section \ref{sec:ness}. The proof of Theorem \ref{monster} contains many of the ideas that are necessary for the general case, but it is simpler since there is only one end.

In section \ref{sec:endfiltration} we introduce the main new tool, the homology end filtration and we prove the main result (Theorem \ref{mainthm}) in Section \ref{sec:proof}.

We end the paper with an appendix collecting some realization results for homology and cohomology classes: characterizations of homology classes represented by simple closed curves that the authors could not find in the literature (which may be of independent interest) and a description of which cohomology classes are given by intersection with proper arcs joining two ends.

\section*{Acknowledgements}
The first named author would like to thank Anna Wienhard for asking the question that started the project and Gabriele Viaggi for useful discussions. She was partially supported by the project \emph{Rigidity, deformations and limits of maximal representations} of the DFG Priority Program SPP 2026 \emph{Geometry at infinity}.

The third named author was supported in part by NSF RTG grant 1045119.

\section{Preliminaries}\label{sec:prelim}
Throughout, a \emph{surface} will refer to an oriented, connected, second countable, Hausdorff two-dimensional manifold. 
Unless stated otherwise, a surface does not have boundary -- the one notable
exception being subsurfaces of other surfaces.
A surface is \emph{of finite type} if its fundamental group is finitely generated and \emph{of infinite type} otherwise.

The \emph{mapping class group} of the surface $S$ is the group of orientation preserving homeomorphisms of $S$ up to isotopy:
\[\mcg(S) = \mathrm{Homeo}^+(S)/\mathrm{isotopy}.\]

Throughout the article, a \emph{curve} will refer to a simple, closed, oriented curve; in addition, we will routinely conflate the isotopy class of a curve with a representative.

A curve is \emph{essential} if it bounds neither a disk nor a once-punctured disk; it is \emph{separating} if its complement is disconnected and \emph{non-separating} otherwise.

When discussing subsurfaces, we assume that every boundary component is an essential curve with the induced orientation, i.e.~such that the subsurface is to the left of the curve.

We will denote by $i$ the \emph{geometric intersection number} of two curves (note: geometric intersection does not take into account the orientation of the curves).
A collection of curves \( \{\alpha_i, \beta_i\}_{i\in I} \) has the \emph{standard (symplectic) intersection pattern} if \( i(\alpha_i, \alpha_j)=0, i(\beta_i, \beta_j) = 0 \), and \( i(\alpha_i, \beta_j) = \delta_{ij} \) for all \( i,j \in I \). 

An \emph{arc} in a surface is the image of a proper embedding of either \( (0,1) \), \([0,1)\) or \([0,1]\) into the surface. When a boundary point of the interval is included, the corresponding point on the surface must belong to a boundary component.
As with curves, we do not distinguish between an arc and its isotopy class (isotopies of arcs are taken relative to the boundary where appropriate).

\subsection{Ends of a surface}
An \emph{end} of a surface is an equivalence class of a descending chain $U_1\supset U_2\supset \dots$ of open connected subsurfaces with compact boundary and such that for any compact $K$ there is an index $n_K$ such that for all $n\geq n_K$, $K\cap U_n=\emptyset$.
Two such chains $U_1\supset U_2\supset \dots$ and $V_1\supset V_2\supset \dots$ are equivalent if for every $n$ there is an $N$ such that $U_N\subset V_n$ and $V_N\subset U_n$.

The \emph{space of ends} $\Ends(S)$ is the set of ends endowed with the topology generated by sets of the form $U^*$, where $U$ is an open subset with compact boundary, and
$$U^*:=\{[U_1\supset U_2\supset\dots]\st \exists n : U_n\subset U\}.$$

An end \( [U_1 \supset U_2 \supset\dots] \) is \emph{planar} if there exists an integer \( n \) such that \( U_n \) is homeomorphic to a subset of the plane (or, equivalently, has genus $0$).
Otherwise, the end is \emph{non-planar}, and every \( U_n \) has infinite genus\footnote{Sometimes in the literature a non-planar end is also referred to as \emph{accumulated by genus} as every neighborhood has infinite genus.}. 
An end is \emph{isolated} if it is an isolated point of the space of ends.
We will routinely refer to an isolated planar end as a \emph{puncture}.

It is easy to check that $\Ends_g(S)$, the subset of non-planar ends, is a closed subset of $\Ends(S)$.

Ker\'ekj\'art\'o and Richards \cite{Richards_Classification} showed that surfaces are topologically classified by the triple $(g, (\Ends(S),\Ends_g(S)))$, where $g\in\N\cup\{0,\infty\}$ is the genus and $(\Ends(S),\Ends_g(S))$ is considered as a pair of topological spaces, up to homeomorphism.

\subsection{Homology of surfaces}
The main focus of this article is the first homology of a surface considered with integral coefficients; accordingly, when referring to the homology of a surface \( S \), we are referring to \( \H1(S;\Z) \).

Every homology class in \( \H1(S;\Z) \) can be represented by a -- possibly non-simple -- loop in \( S \). 
Given a homology class \( x \in \H1(S;\Z) \), we say that \( x \) is \emph{simple} if there is a simple closed curve \( \alpha \) such \( [\alpha]= x \).
In this case, we say that \( x \) is \emph{represented by \( \alpha \)}.

The \emph{algebraic intersection number}, denoted $\ai$, defines a bilinear, antisymmetric form on $\H1(S;\Z)$.
An element \( x \)  of \( \H1(S;\Z) \) is \emph{isotropic} if \( \ai(x,y) = 0 \) for every \( y \in \H1(S;\Z) \).
If neither complementary component of a separating curve on a non-compact surface has compact closure, then the curve is non-trivial in homology; hence, we see that the form $\ai$ is symplectic if and only if $|\Ends(S)|\leq 1$.
Note that  if \( x \) is a simple (non-)isotropic homology class and \( \alpha \) is a curve representing \( x \), then \( \alpha \) is (non-)separating.

Also note that if $a$ is an arc, algebraic intersection of homology
classes with $a$ is a well defined linear functional
$\ai(a, \cdot):\H1(S;\mathbb{Z})\to\mathbb{Z}$ and hence gives a cohomology class in \( H^1(S;\Z) \).

Throughout the paper we will be interested into two special types of subsurfaces, \emph{star} and \emph{flare} surfaces.
\begin{defin}\label{defin:subs}
A \emph{star surface} is a connected finite-type subsurface so that all boundary components are separating curves in $S$ and all complementary components are unbounded.\\
A \emph{flare surface} is an unbounded subsurface $X$ with a single boundary component, which is separating, and such that the closure of $S\ssm X$ is not a finite-type surface with at most one puncture.
\end{defin}
For a star or flare surface $X$, we will denote by $\H1(X;\Z)$ the image of the homology of $X$ under the monomorphism induced by the inclusion $X\hookrightarrow S$. Note that for a general subsurface the map in homology need not be injective as the image of some boundary components might be zero.

\subsection{Homology classes of simple closed curves}
In our study, we require some results on the interaction of simple closed curves with homology classes. 
The first lemma is a criterion to detect simple non-isotropics. The proof is standard (also in the infinite-type setting)
and is delegated to Appendix~\ref{sec:curves}. As mentioned in the introduction, in the appendix we also collect a number of further results on the interplay between homology and simple representability that are not required for the main argument, but may be of independent interest.
\begin{lemma}\label{lem:detectingnonsep1}
	Let $S$ be any surface and $x \in \H1(S;\Z)$.
	Then $x$ is a simple non-isotropic if and only if there exists $y \in \H1(S; \Z)$ such that $\ai(x,y) = 1$. \qed
\end{lemma}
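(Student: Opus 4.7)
My plan for the forward direction: given $x$ simple and non-isotropic, write $x=[\alpha]$ for a non-separating simple closed curve $\alpha$. Because $\alpha$ is non-separating, $S$ cut along $\alpha$ is a connected surface with two new boundary circles (the two sides of $\alpha$); any embedded arc joining them reglues to a simple closed curve $\beta$ with $i(\alpha,\beta)=1$. Hence $\ai([\alpha],[\beta])=\pm 1$, and $y=\pm[\beta]$ satisfies $\ai(x,y)=1$. This argument works uniformly for any surface, finite or infinite type.

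For the converse, assume $\ai(x,y)=1$. I would first observe that $x$ is primitive in $\H1(S;\Z)$: any factorization $x=nx'$ with $n\ge 2$ would give $1=n\,\ai(x',y)$, a contradiction. Next I would reduce to the compact case: pick singular cycles representing $x$ and $y$ and let $\Sigma\subset S$ be a compact connected subsurface (with boundary) containing the supports of both. The class $x$ admits a lift $\tilde x\in \H1(\Sigma;\Z)$, and $\tilde x$ is primitive in $\H1(\Sigma;\Z)$: any relation $\tilde x=n\tilde x'$ there would push forward under the inclusion-induced map to a divisibility relation for $x$ in $\H1(S;\Z)$, contradicting primitivity of $x$.

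The proof is then completed by invoking the classical realization principle that every primitive class in the first homology of a compact orientable surface is represented by a simple closed curve. Applied to $\tilde x$, this yields a simple closed curve $\alpha\subset\Sigma$ with $[\alpha]=\tilde x$, hence with $[\alpha]=x$ in $\H1(S;\Z)$; since $\alpha$ is simple in $\Sigma$, it is simple in $S$.

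The main nontrivial ingredient is the realization principle in the compact setting, which is classical. The one subtlety particular to infinite-type surfaces is the compact reduction together with the preservation of primitivity, both of which are short algebraic checks. I expect no other obstacles.
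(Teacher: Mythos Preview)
Your forward direction and your reduction to a compact subsurface $\Sigma$ match the paper's argument. The gap is in the final step: the ``classical realization principle that every primitive class in the first homology of a compact orientable surface is represented by a simple closed curve'' is \emph{false} for surfaces with boundary. For instance, in a pair of pants with boundary curves $\gamma_1,\gamma_2,\gamma_3$, the class $2[\gamma_1]+[\gamma_2]$ is primitive in $\H1(\Sigma;\Z)\cong\Z^2$ but is not represented by any simple closed curve (compare Theorem~\ref{thm:mp}(ii)). So primitivity of $\tilde x$ alone is not enough.

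The fix is to use the class $y$ more fully. You already chose $\Sigma$ to contain a cycle for $y$; lift it to $\tilde y\in\H1(\Sigma;\Z)$ and observe that $\ai_\Sigma(\tilde x,\tilde y)=\ai_S(x,y)=1$, since the intersection number is computed from representatives in $\Sigma$. Now cap off the boundary of $\Sigma$ to obtain a closed surface $\hat\Sigma$; the images satisfy $\ai(i_*\tilde x,i_*\tilde y)=1$, so $i_*\tilde x$ is a nonzero primitive class in $\H1(\hat\Sigma;\Z)$, and the closed-surface realization (equivalently Theorem~\ref{thm:mp}(i)) gives a non-separating simple closed curve in $\Sigma$ representing $\tilde x$. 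This is exactly the paper's route. In short, your argument for primitivity via push-forward is unnecessary and insufficient; what you actually need is that the pairing $\ai(\tilde x,\tilde y)=1$ survives into $\Sigma$, which is immediate.
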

The complementary components of a separating curve \( \gamma \) in a surface \( S \) determine two disjoint clopen sets \( \lends(\gamma) \) and \( \rends(\gamma) \) that partition \( \Ends(S) \).
The sets are labelled so that, when considering the orientation of \( \gamma \), the set \( \lends(\gamma) \) consists of the ends to the left of \( \gamma \) and \( \rends(\gamma) \) those to the right.

First, we give a lemma determining when simple separating curves define the same homology class.
\begin{lemma}\label{lem:twosep}
	Let $S$ be any surface and let $\alpha, \beta$ be two separating simple closed curves. Then
	$[\alpha] = [\beta]$ if and only if $\lends(\alpha) = \lends(\beta)$.
\end{lemma}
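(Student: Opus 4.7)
The plan is to prove the two directions of the biconditional separately: the backward direction by constructing an explicit compact 2-chain with boundary $\beta - \alpha$, and the forward direction by pairing with cohomology classes of proper arcs between ends.

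For $\lends(\alpha) = \lends(\beta) \Rightarrow [\alpha] = [\beta]$, I would place $\alpha$ and $\beta$ in transverse position. Writing $L_\gamma, R_\gamma$ for the components of $S \ssm \gamma$ with the left-of-boundary convention, consider the open subsurface
\[
U := (L_\beta \cap R_\alpha) \cup (L_\alpha \cap R_\beta).
\]
An end of $S$ lies in $U$ precisely when it belongs to $\lends(\alpha) \triangle \lends(\beta)$, which is empty by hypothesis. Hence $U$ has compact closure in $S$, and the signed sum
\[
C := (L_\beta \cap R_\alpha) - (L_\alpha \cap R_\beta)
\]
is a finite singular 2-chain. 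Computing $\partial C$ contribution-by-contribution along the subarcs of $\alpha$ and $\beta$ in $\overline{U}$, one finds $\partial(L_\beta \cap R_\alpha) = \beta|_{R_\alpha} - \alpha|_{L_\beta}$ and $\partial(L_\alpha \cap R_\beta) = \alpha|_{R_\beta} - \beta|_{L_\alpha}$; the differences reassemble into $\partial C = \beta - \alpha$, so $[\alpha] = [\beta]$ in $\H1(S;\Z)$.

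For $[\alpha] = [\beta] \Rightarrow \lends(\alpha) = \lends(\beta)$, I would argue by contradiction. If the two end-sets differ, then one can find distinct ends $e, e' \in \Ends(S)$ which are separated by $\alpha$ and $\beta$ in incompatible ways — either separated by one curve but not the other, or separated by both with opposite side assignments. Pick a proper arc $a$ joining $e$ to $e'$; as noted in the preliminaries, $a$ defines a cohomology class $\ai(a,\cdot) \in \C1(S;\Z)$. For a separating class $[\gamma]$, the value $\ai(a, [\gamma])$ is $\pm 1$ when $\gamma$ separates $e$ from $e'$ (with sign dictated by the orientation of $\gamma$ and by whether $e \in \lends(\gamma)$), and $0$ otherwise. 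By the choice of $e, e'$ this forces $\ai(a, [\alpha]) \neq \ai(a, [\beta])$, contradicting $[\alpha] = [\beta]$. The degenerate case $\lvert \Ends(S)\rvert \leq 1$ is handled directly, since then every separating curve bounds a compact subsurface on some side, all such classes are null-homologous, and $\lends$ takes only the trivial values allowed by the side-convention.

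The main obstacle is the boundary computation for $C$ when $\alpha$ and $\beta$ intersect: each of $L_\beta \cap R_\alpha$ and $L_\alpha \cap R_\beta$ may then have several connected components, each bounded by arcs alternating between $\alpha$ and $\beta$, and one has to check carefully that all the pieces of $\alpha$ (respectively $\beta$) recombine (with the correct signs produced by the left-of-boundary convention) into the full curve $\alpha$ (respectively $\beta$). Once this verification is in place, both directions of the biconditional follow.
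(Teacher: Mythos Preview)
Your forward direction---pairing with the cohomology class of a proper arc between ends---is exactly the paper's argument (stated there in one sentence). For the backward direction, however, the paper takes a different and somewhat cleaner route that sidesteps the boundary computation you flag as the main obstacle. Rather than building a $2$-chain directly from the pieces of $S\ssm(\alpha\cup\beta)$, the paper encloses $\alpha\cup\beta$ in a compact subsurface $\Sigma$ whose complementary components $X_i$ are all unbounded. Since $\lends(\alpha)=\lends(\beta)$, each $X_i$ lies to the right of $\alpha$ if and only if it lies to the right of $\beta$; letting $I_r$ be that common set of indices, the compact piece of $\Sigma$ to the right of $\alpha$ (respectively $\beta$) gives
\[
[\alpha]=\sum_{i\in I_r}\sum_{\gamma\subset\partial X_i}[\gamma]=[\beta].
\]
Both classes are thereby expressed as the \emph{same} linear combination of the fixed boundary classes of $\Sigma$, and the question of whether or how $\alpha$ and $\beta$ intersect never enters. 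Your explicit $2$-chain $C=(L_\beta\cap R_\alpha)-(L_\alpha\cap R_\beta)$ is correct and conceptually direct, but the paper's trick of passing to an ambient compact subsurface trades the arc-by-arc reassembly for a one-line identity in terms of curves that are already simple and disjoint.
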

\begin{proof}
	First observe that two ends are on different sides of $\alpha$ exactly if there is an arc connecting these
	ends so that $\ai([\alpha], a) \neq 0$. This shows that homologous curves induce the same decomposition of
	ends.
	
Now, suppose the set of ends to the left of $\alpha$ is the same as the set of ends to the left of $\beta$. 	Let $\Sigma$ be a compact subsurface containing $\alpha\cup\beta$ (where we allow $\Sigma$ to have boundary components homotopic to punctures), such that all connected components  $\{X_i\st i\in I\}$ of $S\smallsetminus \Sigma$ are unbounded.
	Since $\alpha$ and $\beta$ induce the same partition of ends, a surface $X_i$ is to the right of $\alpha$ if and only if it is to the right of $\beta$. But then if $I_r$ is the set of indices $i$ such that $X_i$ is to the right of $\alpha$, we have
	$$[\alpha]=\sum_{i\in I_r}\sum_{\gamma\subset\partial X_i}[\gamma]=[\beta].$$
\end{proof}

\begin{lemma}\label{lem:outside-curves}
	Let $S$ be any surface and let $X\subset S$ be a subsurface with separating boundary components.
	Suppose that $\alpha$ is a simple closed curve which is disjoint from $X$.
	If $[\alpha]=[\beta]$ for some loop $\beta\subset X$ , then $[\alpha] = \pm[\partial_j X]$, where $\partial_j X$ is one of the boundary curves of $X$.
\end{lemma}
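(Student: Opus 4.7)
The plan is to combine a Mayer--Vietoris computation on the decomposition $S = X \cup (S\ssm\Int(X))$ with Lemmas~\ref{lem:detectingnonsep1} and~\ref{lem:twosep}, exploiting that $\alpha$ is simple.

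First, I would show every component of $S\ssm\Int(X)$ meets $\partial X$ in a unique curve. If a component $Y$ met two distinct boundary curves $\partial_i X$ and $\partial_k X$, concatenating an arc in $Y$ from $\partial_i X$ to $\partial_k X$ with an arc in $X$ between the same endpoints would produce a loop in $S$ meeting $\partial_i X$ transversally in exactly one point, contradicting the fact that $[\partial_i X]$ is isotropic (which follows from $\partial_i X$ being separating). Let $Y$ be the component containing $\alpha$, with unique boundary $\partial_{j_0} X$.

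Next, I would rule out $\alpha$ being non-separating in $Y$. Otherwise Lemma~\ref{lem:detectingnonsep1} applied inside $Y$ would give a loop $\gamma \subset Y$ with $\ai([\alpha],[\gamma]) = 1$; since $\gamma$ is disjoint from $X\supset\beta$, we would also have $\ai([\beta],[\gamma]) = 0$, contradicting $[\alpha]=[\beta]$. Thus $\alpha$ separates $Y$ into two subsurfaces $Y'$ and $Y''$ with $\partial_{j_0} X \subset Y'$, so $Y''$ has $\alpha$ as its only boundary in $S$. A Mayer--Vietoris computation for $S = X \cup (S\ssm\Int(X))$ applied to the equality $[\alpha]=[\beta]$ shows that $([\beta],[\alpha])$ lifts to a class $c = \sum_j n_j [\partial_j X] \in H_1(\partial X;\Z)$; the one-boundary conclusion of the first step forces $[\alpha] = n\,[\partial_{j_0}X]$ in $H_1(Y;\Z)$, and hence in $H_1(S;\Z)$, for some integer $n$.

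Finally, to pin down $n = \pm 1$ I would use the simplicity of $\alpha$ once more. Since $[\alpha]$ is a multiple of the separating (hence isotropic) class $[\partial_{j_0} X]$, it is itself isotropic; by Lemma~\ref{lem:detectingnonsep1} the curve $\alpha$ is therefore separating in $S$. Evaluating on an arc $a$ joining two ends on opposite sides of $\partial_{j_0} X$ gives $\ai(a,[\alpha]) = n\cdot\ai(a,[\partial_{j_0}X]) = \pm n$, while simultaneously $\ai(a,[\alpha]) \in \{0,\pm 1\}$ because $\alpha$ is simple and separating. Combined with the cobounding picture of $Y'$ and $Y''$ from the second step, this forces $n = \pm 1$, giving $[\alpha] = \pm[\partial_{j_0} X]$. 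The main obstacle I anticipate is the Mayer--Vietoris bookkeeping in the infinite-type setting together with the degenerate case $n=0$ (when $Y''$ is compact while $Y'$ is not), which would need separate handling, likely by exhibiting a different index $j$ with $[\partial_j X]=0$ in $H_1(S;\Z)$.
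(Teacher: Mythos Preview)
Your approach is correct for the main case $[\alpha]\neq 0$ but takes a genuinely different and more elaborate route than the paper. The paper avoids Mayer--Vietoris entirely: it first observes directly that $[\alpha]$ is isotropic (since $\alpha$ is disjoint from $X$, it pairs trivially with $\H1(X;\Z)$, and since $[\alpha]=[\beta]\in\H1(X;\Z)$, it pairs trivially with classes supported in the complement of $X$), hence $\alpha$ is separating in $S$. Orienting $\alpha$ so that $X$ lies to its left and letting $\partial_jX$ be the unique boundary component of $X$ with $\alpha$ on its right, one has $\lends(\partial_jX)\subseteq\lends(\alpha)$; equality gives $[\alpha]=[\partial_jX]$ immediately by Lemma~\ref{lem:twosep}, while strict inclusion produces an arc in $S\ssm X$ pairing nontrivially with $[\alpha]$ but trivially with every class in $\H1(X;\Z)$, contradicting $[\alpha]=[\beta]$. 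Thus the paper replaces your Mayer--Vietoris computation by a single appeal to Lemma~\ref{lem:twosep}. Your route has the merit of not invoking that lemma, but at the cost of extracting the integer $n$ and then running a second arc argument to pin it down; the paper's arc appears only in the contradiction branch.

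On the degenerate case $n=0$: your instinct that it needs separate handling is correct, but your proposed fix (exhibiting some other $j$ with $[\partial_jX]=0$) does not work in general. Take $X$ to be a compact star surface in a surface with at least three ends, so that every $[\partial_jX]$ is nonzero, and let $\alpha$ bound a compact positive-genus subsurface inside one complementary flare; then $[\alpha]=0=[\beta]$ for $\beta$ a trivial loop in $X$, yet no $[\partial_jX]$ vanishes. The paper's argument has the same blind spot (when $\rends(\alpha)=\emptyset$ the arc in its contradiction step cannot exist). In the paper's applications the lemma is only invoked with $[\alpha]\neq 0$, so this is harmless, but you should add that hypothesis explicitly rather than attempt to patch the trivial case.
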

\begin{proof}
Let $i\co X\hookrightarrow S$ be the inclusion and $i_*$ the map induced on homology.
	As \( \alpha \) is disjoint from \( X \), we have that \( \ai([\alpha],v) = 0 \) for all \( v \in i_*\H1(X,\Z) \) and, as \( [\alpha] \in i_*\H1(X, \Z) \), we can conclude that \( \ai([\alpha], v) = 0 \) for all \( v \in \H1(S, \Z) \). 
	So, \( [\alpha] \) is isotropic and hence \( \alpha \) is separating. 

	Without loss of generality, suppose that \( X \) is to the left of \( \alpha \).
	Now, there exists a unique \( j \) such that \( \alpha \) is to the right of \( \partial_j X \), which implies  \( \lends(\partial_j X) \subseteq \lends(\alpha) \).
	If equality holds, then \( [\alpha] = [\partial_j X] \).
	If equality fails, then there is an arc \( a \) in \( S \ssm X \) connecting an end in \( \lends(\alpha) \ssm \lends(\partial X) \) to an end in \( \rends(\alpha) \).
	It follows that \( |\ai([\alpha], a)| = 1 \) and that  \( \ai(x,a)=0 \) for every \( x \in i_*\H1(X;\Z) \), which is a contradiction.
\end{proof}

\section{The Loch Ness Monster surface}\label{sec:ness}
In this section we discuss the case of the Loch Ness monster surface -- the infinite-genus surface with a unique end -- and of the once-punctured Loch Ness Monster. 
For these surfaces, the complication of preserving the structure of ends is not necessary, and so the result
takes a form very reminiscent of the closed case.

\begin{theor}\label{thm:monsters}
	If $S$ is the Loch Ness monster surface with at most one puncture, then the map $\rho_S:\mcg(S)\to\Aut(\H1(S;\Z))$	is a surjection onto the group of automorphisms of homology preserving the algebraic intersection form and acting as the identity on the isotropic subspace.
\end{theor}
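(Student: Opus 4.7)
The plan is to follow the template of the closed-surface case while grafting onto it an exhaustion argument of Richards that pushes all constructions into the unique non-planar end. First fix a reference collection $\{\alpha_i, \beta_i\}_{i\in\N}$ of simple closed curves on $S$ with the standard symplectic intersection pattern and with no accumulation in any compact subset of $S$; such a collection is easy to produce directly on the Loch Ness monster (and in the complement of a small disk around the puncture when present). Together with the class $[\pi]$ of a small peripheral loop around the puncture, if there is one, these classes give a basis of $\H1(S;\Z)$.

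Now let $\phi$ be an automorphism preserving $\ai$ and restricting to the identity on the isotropic subspace (which is trivial when $S$ has no puncture and is generated by $[\pi]$ otherwise), and set $a_i := \phi([\alpha_i])$, $b_i := \phi([\beta_i])$. The family $\{a_i, b_i\}$ satisfies the standard intersection pattern at the level of $\ai$, and by Lemma~\ref{lem:detectingnonsep1} each class is representable by a non-separating simple closed curve. The proof reduces to realizing the entire family simultaneously by disjointly placed curves $\alpha'_i, \beta'_i$ whose \emph{geometric} intersection pattern is standard and which do not accumulate in any compact subset of $S$. Once this is accomplished, the Ker\'ekj\'art\'o--Richards classification, applied in a back-and-forth fashion to compatible compact exhaustions of $(S, \bigcup_i \alpha_i\cup\beta_i)$ and $(S, \bigcup_i \alpha'_i\cup\beta'_i)$, yields a homeomorphism $f\co S\to S$ with $f(\alpha_i) = \alpha'_i$ and $f(\beta_i) = \beta'_i$, and such an $f$ induces $\phi$ on $\H1(S;\Z)$.

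The main obstacle is producing the simultaneous \emph{escaping} realization of the $a_i, b_i$; this is the feature absent from the finite-type case. For it we adapt Richards's argument. Fix a compact exhaustion $K_1 \subset K_2 \subset \cdots$ of $S$ and inductively build nested finite-type subsurfaces $\Sigma_n \supset K_n$ of genus $n$ with a single separating boundary component, together with curves $\alpha'_1, \beta'_1, \ldots, \alpha'_n, \beta'_n$ in $\Sigma_n$ realizing $\{a_i, b_i\}_{i\leq n}$ with the standard geometric pattern. The inductive step uses the classical finite-type change of coordinates inside $\Sigma_{n+1}$: the hypothesis that $a_{n+1}$ and $b_{n+1}$ have trivial $\ai$ with every $a_i$ and $b_i$ for $i\leq n$ forces them outside the image of $\H1(\Sigma_n;\Z)$ in $\H1(S;\Z)$, which allows their representatives to be chosen inside $\Sigma_{n+1}\ssm\Sigma_n$, leaving the previously placed curves undisturbed. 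Since $S\ssm\Sigma_n$ remains of Loch Ness monster type with infinite genus, there is always room to continue, and $\bigcup_n \Sigma_n = S$ because $\Sigma_n\supset K_n$. In the once-punctured case the entire construction is carried out away from a fixed disk neighborhood of the puncture, and the condition $\phi([\pi]) = [\pi]$ lets us extend the resulting homeomorphism by the identity across that neighborhood.
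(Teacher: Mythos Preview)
Your overall plan is close to the paper's, but there is a genuine gap in the inductive construction of the target surfaces $\Sigma_n$. You ask that $\Sigma_n$ simultaneously have genus exactly $n$ (so that $\H1(\Sigma_n;\Z)=\Span\{a_1,b_1,\dots,a_n,b_n\}$ and the next pair can be pushed off it) and contain the prechosen compact $K_n$ (so that the $\Sigma_n$ exhaust). These two requirements are in general incompatible: $\Sigma_n\supset K_n$ forces $\H1(K_n;\Z)\subset\H1(\Sigma_n;\Z)=\phi\bigl(\Span\{[\alpha_1],\dots,[\beta_n]\}\bigr)$, i.e.\ $\phi^{-1}(\H1(K_n;\Z))$ must lie in the homology of the first $n$ reference handles. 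Since $\phi$ is an arbitrary symplectic automorphism there is no reason this holds, and an exhaustion $\{K_n\}$ of the Loch Ness monster cannot consist of disks, so the obstruction cannot be sidestepped by taking the $K_n$ small. If instead you choose $\Sigma_{n+1}\supset K_{n+1}$ first and then try to realize $a_{n+1},b_{n+1}$ inside $\Sigma_{n+1}\ssm\Sigma_n$ by ``change of coordinates'', the problem is that these classes need not lie in $\H1(\Sigma_{n+1};\Z)$ at all. Without exhaustion the curves $\alpha_i',\beta_i'$ may accumulate in $S$, and then no homeomorphism can carry the escaping source system to them; the back-and-forth you invoke afterwards cannot repair this.

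The paper resolves exactly this tension by weaving the back-and-forth \emph{into} the curve placement rather than postponing it. It builds two nested sequences $A_n,B_n$ of one-boundary subsurfaces with homeomorphisms $f_n\colon A_n\to B_n$ inducing $\phi$; at odd steps $A_n$ is forced to contain a member of a fixed exhaustion and the new target curves are found outside $B_{n-1}$, while at even steps the roles of $A,B$ (and of $\phi,\phi^{-1}$) are swapped. This alternation is precisely what guarantees that \emph{both} sequences exhaust $S$, so the direct limit of the $f_n$ is a genuine homeomorphism rather than a non-surjective self-embedding of $L$. This is exactly the point the paper flags in the remark following its proof.
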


Note that for the Loch Ness monster the isotropic subspace is trivial, so we recover Theorem \ref{monster}.

\begin{proof}
We first prove the result for the Loch Ness monster $L$, where the condition on the action on the isotropic subspace is void.

	Clearly, a mapping class preserves algebraic intersection, so we just need to prove that if $\phi$ is an automorphism preserving $\ai$, then it is induced by a mapping class.
	
	Fix a compact exhaustion $\{\Sigma_n\}_{n\in\N}$ of $L$, where $\Sigma_n$ has genus $n$ and connected boundary. We want to construct two sequences of subsurfaces $\{A_n\}$ and $\{B_n\}$, each with connected boundary, and homeomorphisms $f_n\co A_n\to B_n$ such that:
	\begin{enumerate}
		\item $\Sigma_n\subset A_n$ for every odd $n$ and $\Sigma_n\subset B_n$ for every even $n$,
		\item \( f_n |_{A_{n-1}} = f_{n-1} \), and
		\item the induced homomorphism \( (f_n)_* \co H_1(A_n ; \Z) \to H_1(B_n; \Z) \) agrees with $\phi\big|_{\H1(A_n;\Z)}$.
	\end{enumerate}
	Note that condition~(1) implies that both sequences \( \{A_n\}
        \) and \( \{B_n\} \) are exhaustions. Therefore, using
        condition~(2) implies that we can take the direct limit\footnote{Formally, we view \( \{A_n\} \) and \( \{B_n\} \) as directed systems with respect to inclusion and,  as both sequences are exhaustions of \( S \), both of their direct limits are exactly \( S \). It is in this setting that we use the universal property of direct limits to obtain the map \( f \).}
        of the \( f_n \), and the resulting map $f$ is a homeomorphism of $L$. 
        Condition~(3) then implies that $f$ acts as $\phi$ on homology.
	
	We construct the desired sequence of subsurfaces via induction.
	
	\textbf{Base case:} Set $g_1=1$ and $A_1=\Sigma_1$.
	Choose a geometric homology basis $\alpha_1,\beta_1$ of $\H1(\Sigma_1;\Z)$ and realize the image classes $\phi([\alpha_1]),\phi([\beta_1])$ by non-separating curves $\alpha_1',\beta_1'$ intersecting once  (as in \cite[Theorem 6.4]{FM_Primer}).
	Let $B_1$ be the one-holed torus obtained by taking a regular neighborhood of \( \alpha_1'\cup \beta_1' \) and let $f_1$ be a homeomorphism between $A_1$ and $B_1$ sending $\alpha_1$ to $\alpha_1'$ and $\beta_1$ to $\beta_1'$.
	
	\textbf{Induction step:} Suppose that we are given $A_n$, $B_n$ and $f_n$ satisfying conditions (1)-(3) above.
	
	If $n$ is even, set $A_{n+1}:=\Sigma_m$, where $m\geq n+1$ is such that $A_n\subset\Sigma_m$ and $A_n$ is not homotopic to $\Sigma_m$.
	Set $g_{n+1}$ to be the genus of $A_{n+1}$.
	Choose curves $\alpha_{g_n+1},\dots,\beta_{g_{n+1}}$ in $A_{n+1}\ssm A_n$ with the standard intersection pattern.
	Note that the images $\phi([\alpha_i])$, $\phi([\beta_i])$, for $i>g_n$, belong to $\H1(S\ssm B_n;\Z)$, as they have algebraic intersection zero with all vectors in a basis for $\H1(B_n;\Z)$.
	Hence, as the $B_n$ have a single boundary component, the classes can be realized by curves $\alpha_i',\beta_i'$ outside $B_n$ and with the standard intersection pattern.
	Let $B_{n+1}$ be a genus $g_{n+1}$ surface with one boundary component containing $B_n$ and all the curves constructed.
	$B_{n+1}\ssm B_n$ and $A_{n+1}\ssm A_n$ are both surfaces with genus $g_{n+1}-g_n$ with two boundary components, so we can extend $f_n$ to a homeomorphism $f_{n+1}$ sending the $\alpha_i,\beta_i$, for $g_n+1\leq i\leq g_{n+1}$ to the corresponding $\alpha_i',\beta_i'.$
	
	If $n$ is odd, the argument is similar: set \( B_{n+1} = \Sigma_m \), where \( m \geq n+1 \) is such that \( B_n \subset \Sigma_m \) is not homotopic to \( \Sigma_m \).
	Proceed identically to the above case switching the roles of \( A_n, \alpha_i \), and \( \beta_i \) with those of \( B_n, \alpha_i',  \) and \( \beta_i' \), respectively, in every instance.
	Now after constructing \( A_{n+1} \), we extend \( f_n^{-1} \co B_n \to A_n \) to a homeomorphism \( h \co B_{n+1} \to A_{n+1} \) mapping \( \alpha_i', \beta_i' \), for \( g_{n}+1 \leq i \leq g_{n+1} \), to the corresponding \( \alpha_i, \beta_i \).
	We finish by setting \( f_{n+1} = h^{-1} \).

In the case of the once-punctured Loch Ness monster surface \( L' \), we take an exhaustion of finite-type surface  $\{\Sigma_n\}_{n\in\N}$  so that \( \Sigma_n \) has genus $n$, connected boundary, and contains the unique puncture of \( L' \). The same proof then yields the result.
\end{proof}

\begin{rmk}
The role of alternating between constructing \( A_n \) and \( B_n \) is a bit subtle: the main purpose is that doing so allows us to simultaneously build both \( f \) and \( f^{-1} \).
If we only constructed the \( A_n \), we would not be able to guarantee that the resulting map \( f \) is a homeomorphism: the issue is that there are non-surjective embeddings of infinite-type surfaces into themselves and such maps can arise as direct limits.
In this case, the boundary curves of the images of the \( A_n \) would have to accumulate in \( S \). 
Therefore, one should view this alternating technique as a means to avoid this accumulation issue.
Note that this technique appears in Richards's paper on the classification of surfaces \cite{Richards_Classification}.
\end{rmk}

\subsection{The infinite-degree integral symplectic group.}

Consider an infinite-rank \( \Z \)-module $V$ with a countable basis $\{a_i,b_i\st i\in \N\}$ and a symplectic form $\omega$ such that for every $i,j\in \N$
\begin{align*}
\omega(a_i,b_j)&=\delta_{i,j}\\
\omega(a_i,a_j)&=\omega(b_i,b_j)=0.
\end{align*}

The \emph{infinite-degree integral symplectic group} $\Sp(\N;\Z)$ is the group of linear automorphisms of $V$ preserving $\omega$. 
It is clear that the group of automorphisms of \( \H1(L ; \Z) \) preserving \( \ai \) is isomorphic to \( \Sp(\N; \Z) \). 
Under this isomorphism, we have the immediate corollary of Theorem \ref{monster}:

\begin{cor}
If \( L \) is the Loch Ness monster surface, then the action of \( \mcg(L) \) on \( H_1(S; \Z) \) induces an epimorphism \( \mcg(L) \to \Sp(\N; \Z) \).  \qed
\end{cor}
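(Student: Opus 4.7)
The plan is to derive the corollary directly from Theorem \ref{monster} (equivalently, the $n=0$ case of Theorem \ref{thm:monsters}) by identifying $(\H1(L;\Z), \ai)$ with the model symplectic module $(V,\omega)$.

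First I would exhibit an explicit symplectic basis of $\H1(L;\Z)$. Fix a compact exhaustion $\{\Sigma_n\}_{n\in\N}$ of $L$ by genus-$n$ subsurfaces with connected boundary, as in the proof of Theorem \ref{thm:monsters}. Inductively choose curves $\alpha_n, \beta_n \subset \Sigma_n \ssm \Sigma_{n-1}$ so that $\{\alpha_i,\beta_i\}_{i \le n}$ has the standard symplectic intersection pattern on $\Sigma_n$; this is possible since $\Sigma_n \ssm \Sigma_{n-1}$ is a two-holed torus to which a symplectic pair can be added. Because $\H1(\Sigma_n;\Z)$ is free abelian of rank $2n$ on the classes $\{[\alpha_i],[\beta_i]\}_{i \le n}$, and because $\H1(L;\Z) = \varinjlim \H1(\Sigma_n;\Z)$ under the inclusions (which respect these bases), the classes $\{[\alpha_i],[\beta_i]\}_{i \in \N}$ form a free $\Z$-basis of $\H1(L;\Z)$.

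Next, sending $a_i \mapsto [\alpha_i]$ and $b_i \mapsto [\beta_i]$ defines a $\Z$-module isomorphism $\Phi \co V \to \H1(L;\Z)$. The standard intersection pattern, together with the fact that $\ai$ is bilinear and antisymmetric, shows that $\Phi$ carries $\omega$ to $\ai$. Consequently, conjugation by $\Phi$ yields a group isomorphism between $\Sp(\N;\Z)$ and the subgroup of $\Aut(\H1(L;\Z))$ preserving $\ai$. Since $L$ has a single end, the form $\ai$ is symplectic on $\H1(L;\Z)$, so there is no further isotropic subspace to worry about and the preservation condition in Theorem \ref{thm:monsters} reduces to preservation of $\ai$.

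Finally, Theorem \ref{monster} asserts that $\rho_L$ surjects onto the group of $\ai$-preserving automorphisms of $\H1(L;\Z)$. Composing $\rho_L$ with the isomorphism induced by $\Phi^{-1}$ gives the desired epimorphism $\mcg(L) \to \Sp(\N;\Z)$. The only substantive step is the construction and basis verification in the first paragraph; everything else is a formal translation. I do not expect any genuine obstacle, since the hard work is already encapsulated in Theorem \ref{thm:monsters}.
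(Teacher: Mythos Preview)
Your argument is correct and matches the paper's approach exactly: the paper simply notes that the $\ai$-preserving automorphisms of $\H1(L;\Z)$ are isomorphic to $\Sp(\N;\Z)$ and deduces the corollary immediately from Theorem~\ref{monster}, and you have merely spelled out the identification via a geometric symplectic basis. Nothing more is needed.
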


\begin{rmk}
We get an epimorphism $\mcg(S)\to \Sp(\N;\Z)$ also if $S$ is the once-punctured Loch Ness monster, by looking at the action on the quotient of homology by its isotropic subspace.
\end{rmk}

We endow $\Sp(\N;\Z)$  with the topology whose subbasis is given by sets of the form
\[
U_v = \{ A \in \Sp(\N, \R) \st Av = v \}
\]
and their left translates.
This topology, often referred to as the \emph{permutation topology}, turns $\Sp(\N;\Z)$ into a topological group.
We also consider \( \mcg(L) \) as topological group by endowing it with the quotient topology coming from \( \Homeo^+(S) \) equipped with the compact-open topology.
Using the curve graph, this topology on \( \mcg(L) \) can also be described as a permutation topology (see \cite[Section 2.4]{APV_Cohomology} for details).
In particular, one can readily show that the homomorphism \( \mcg(L) \to \Sp(\N; \Z) \) is continuous.

For any $g$, we can naturally embed $\Sp(2g;\Z)$ in $\Sp(\N;\Z)$; this is accomplished by making any element of $\Sp(2g;\Z)$ act on the first $2g$ basis vectors and extending it to the identity on the other basis vectors.
Similarly, we have natural inclusions of $\Sp(2g;\Z)$ in $\Sp(2g';\Z)$ for every $g\leq g'$.
This gives us a directed system and we can consider the direct limit $\Sp(2\infty; \Z) = \varinjlim \Sp(2g;\Z)$, which is a proper subgroup of $\Sp(\N;\Z)$.

The obvious analogy is to consider the directed system of mapping class groups of surfaces $S_{g,1}$ of genus $g$ with one boundary component.
A mapping class is \emph{compactly supported} if it can be represented by a homeomorphism which is the identity outside of a compact set.
The direct limit $\varinjlim\mcg(S_{g,1})$ is the subgroup of compactly supported mapping classes $\mcg_c(L)$ of the Loch Ness monster.

The above discussion yields the following commutative diagram of topological groups:
\[
\xymatrix{\mcg(S_{g,1}) \ar@{^{(}->}[r]\ar@{->>}[d]  & \mcg_c(L)\ar@{^{(}->}[r]\ar@{->>}[d] &\mcg(L)\ar@{->>}[d]  \\
	\Sp(2g;\Z)\ar@{^{(}->}[r] & \Sp(2\infty;\Z)\ar@{^{(}->}[r] &\Sp(\N;\Z)}
\]
where all maps are continuous.
For the Loch Ness monster,  \( \mcg_c(L) \) is dense in \( \mcg(L) \) \cite[Theorem 4]{PV_Algebraic}.
Therefore, the surjectivity of \( \mcg(L) \to \Sp(\N; \Z) \) tells us that \( \Sp(2\infty, \Z) \) is dense in \( \Sp(\N; \Z) \).
(Following the the proof of \cite[Theorem 4]{PV_Algebraic}, one can prove this directly as well.)

\begin{cor}
\( \Sp(2\infty, \Z) \) is dense in \( \Sp(\N;\Z) \). \qed
\end{cor}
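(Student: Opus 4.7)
The plan is to extract the density statement directly from the commutative diagram assembled above, using only three ingredients: \( \rho_L \co \mcg(L) \to \Sp(\N;\Z) \) is a continuous surjection, \( \mcg_c(L) \) is dense in \( \mcg(L) \) by \cite[Theorem 4]{PV_Algebraic}, and the image of \( \mcg_c(L) \) under \( \rho_L \) lies in \( \Sp(2\infty;\Z) \). The general principle at play is that a continuous surjective map carries dense subsets to dense subsets: for any nonempty open \( U \subseteq \Sp(\N;\Z) \), surjectivity makes \( \rho_L^{-1}(U) \) a nonempty open subset of \( \mcg(L) \), and density of \( \mcg_c(L) \) then yields \( f \in \mcg_c(L) \) with \( \rho_L(f) \in U \). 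Thus \( \rho_L(\mcg_c(L)) \) is dense in \( \Sp(\N;\Z) \), and once the containment in \( \Sp(2\infty;\Z) \) is checked, the corollary follows.

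To set up the containment \( \rho_L(\mcg_c(L)) \subseteq \Sp(2\infty;\Z) \), I would first align the abstract symplectic basis \( \{a_i,b_i\}_{i\in\N} \) of \( V = \H1(L;\Z) \) with a geometric basis coming from a compact exhaustion. Fix an exhaustion \( \{\Sigma_n\}_{n\in\N} \) of \( L \) in which \( \Sigma_n \) has genus \( n \) and connected boundary, and choose curves \( \alpha_i, \beta_i \subset \Sigma_i \ssm \Sigma_{i-1} \) with the standard symplectic intersection pattern, so that \( \{[\alpha_1],[\beta_1],\dots,[\alpha_n],[\beta_n]\} \) is a basis of \( \H1(\Sigma_n;\Z) \) for every \( n \). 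Identifying this with \( \{a_i,b_i\} \), the embedding \( \Sp(2g;\Z)\hookrightarrow \Sp(\N;\Z) \) of the diagram is exactly the one acting as the identity on \( a_i,b_i \) for \( i>g \). Now any \( f \in \mcg_c(L) \) is supported in some \( \Sigma_g \), so it fixes each \( \alpha_i, \beta_i \) for \( i>g \) pointwise; its image \( \rho_L(f) \) therefore fixes the corresponding basis vectors and lies in \( \Sp(2g;\Z)\subseteq \Sp(2\infty;\Z) \), as required.

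The main (and only) potential obstacle is this bookkeeping step, ensuring that the directed-system structure on the symplectic side genuinely matches the one on the mapping class group side. This is settled by the compatibility of the chosen symplectic basis with the exhaustion defining \( \{\mcg(S_{g,1})\} \); beyond this, the corollary is an immediate consequence of the continuous-surjection-plus-density argument of the first paragraph.
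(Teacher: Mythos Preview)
Your proposal is correct and matches the paper's argument exactly: the paper deduces the corollary from the continuity and surjectivity of \( \rho_L \), the density of \( \mcg_c(L) \) in \( \mcg(L) \) \cite[Theorem 4]{PV_Algebraic}, and the containment \( \rho_L(\mcg_c(L)) \subseteq \Sp(2\infty;\Z) \), just as you do. Your write-up simply makes explicit the general principle that continuous surjections carry dense subsets to dense subsets and verifies the containment via a compatible exhaustion; the paper leaves these as implicit and marks the corollary with a \qed.
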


\section{The homology end filtration}\label{sec:endfiltration}

In this section we introduce an extra structure associated to the homology of a surface, called  the homology end filtration. Its main purpose is to capture the necessary information of the space of ends of the surface.
This structure is a poset of a class of submodules of $\H1(S;\Z)$ whose space of ultrafilters will correspond to the space of ends of the surface.
This will give us a way to associate a self map of $(\Ends(S),\Ends_g(S))$ to an automorphism of homology preserving the homology end filtration.

Throughout this section we routinely require an additional condition on a surface, which we denote \( (\star) \) and is defined as follows:

\begin{quote}
A surface satisfies \( (\star) \) if it is either planar with at least 4 ends; of finite positive genus with at least 3 ends; or infinite-genus and not homeomorphic to either the Loch Ness monster or the once-punctured Loch Ness monster surface.
\end{quote}

\subsection{Flare surfaces and their homology}
Recall that a \emph{flare surface} is an unbounded subsurface $X$ whose boundary is a single separating simple closed curve and such that $\rends(\partial X)$ is neither empty nor a single puncture.
Note that by definition of flare surface, its boundary is non-trivial in homology.
Let $\hs$ be the set of all flare surfaces.

The main reason why we are interested in these subsurfaces is the following consequence of the definition of the space of ends.

\begin{lemma}\label{lem:clean-basis}
Given a surface $S$ satisfying \( (\star) \), the set
\[ \{ \lends(\partial X) : X\in\hs \} \]
is a  subbasis for $\Ends(S)$ consisting of clopen sets. \qed
\end{lemma}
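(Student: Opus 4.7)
My plan is to handle the two assertions separately. The clopen assertion is essentially definitional: a flare surface has a single simple closed curve as boundary, so $\partial X$ is compact, and both $X$ and $S \smallsetminus \bar X$ are open with compact boundary. Consequently, $\lends(\partial X) = X^*$ and $\rends(\partial X) = (S \smallsetminus \bar X)^*$ are open in $\Ends(S)$, so $\lends(\partial X)$ is clopen.

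For the subbasis claim, I will actually prove the stronger statement that for every end $e \in \Ends(S)$ and every open neighborhood $V$ of $e$ there is a \emph{single} flare $X$ with $e \in \lends(\partial X) \subset V$. The first ingredient is a standard neighborhood-basis argument for the end topology: fix a compact exhaustion $K_1 \subset K_2 \subset \cdots$ of $S$ chosen so that every complementary component is unbounded and has connected boundary (achieved by enlarging a given exhaustion with arcs joining boundary components inside each complementary region). Letting $Y_n$ be the unique component of $S \smallsetminus K_n$ containing a tail of $e$, the sets $Y_n^*$ form a neighborhood basis of $e$, and each $Y_n$ is an open connected subsurface with a single separating boundary curve $c_n$. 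For $n$ large, $Y_n^* \subset V$.

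It then remains to upgrade $Y_n$ to a flare. The subsurface $Y_n$ fails to be a flare precisely when $\overline{S \smallsetminus Y_n}$ is finite-type with at most one puncture, which forces $\rends(c_n)$ to be either empty or a single isolated planar end $e'$. In the empty case, $V$ must be all of $\Ends(S)$, and one simply shrinks $Y_n$ further to a proper neighborhood; in the $\{e'\}$ case, I use condition $(\star)$ to locate a third end $e'' \in \Ends(S) \smallsetminus \{e, e'\}$ and further refine $Y_n$ to $X \subset Y_n$ with $e \in X^*$ and $e'' \notin X^*$. This refinement guarantees $\rends(\partial X) \supset \{e', e''\}$, so $\overline{S \smallsetminus X}$ has at least two punctures and is not finite-type with at most one puncture: $X$ is then a flare, and $e \in \lends(\partial X) \subset V$ as desired.

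The main obstacle will be verifying that condition $(\star)$ exactly rules out the configurations in which no such third end $e''$ exists. A case analysis organized by the genus of $S$ and the cardinality of $\Ends(S)$ should show that the excluded surfaces — the Loch Ness and once-punctured Loch Ness surfaces, planar surfaces with at most three ends, and positive-finite-genus surfaces with at most two ends — are precisely the configurations in which the bad case (where $\rends(c_n)$ is a single puncture and no alternative third end is available) cannot be avoided.
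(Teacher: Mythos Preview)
Your argument has a genuine gap: the ``stronger statement'' you set out to prove --- that a \emph{single} flare $X$ with $e\in\lends(\partial X)\subset V$ always exists --- is false in general, so the collection is only a subbasis and not a basis. Take $S$ the four-punctured sphere (which satisfies $(\star)$), let $e$ be one of the punctures, and let $V=\{e\}$. Any subsurface $X$ with $\lends(\partial X)=\{e\}$ is a once-punctured disk, so $\partial X$ bounds a once-punctured disk and is therefore not essential; by the standing convention on subsurfaces in the paper, $X$ is not a flare. Hence no single flare has $\lends(\partial X)=\{e\}$, and you genuinely need an intersection of two flares (say those with $\lends$ equal to $\{e,e_1\}$ and $\{e,e_2\}$) to isolate $e$.

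The source of the error is your claim that ``$Y_n$ fails to be a flare \emph{precisely} when $\overline{S\smallsetminus Y_n}$ is finite-type with at most one puncture.'' This ignores the essentiality requirement on $\partial Y_n$: if $Y_n$ is itself a once-punctured disk (which will happen in your exhaustion whenever $e$ is a puncture and $n$ is large), then $\rends(c_n)=\Ends(S)\smallsetminus\{e\}$ may be large while $c_n$ is still inessential, so $Y_n$ is not a flare even though your stated criterion is satisfied. Your case analysis on $\rends(c_n)$ never detects this. The fix is either to drop the stronger claim and argue directly for a subbasis using pairs of flares in the planar/puncture case, or (when $S$ has positive genus) to arrange that $Y_n$ carries genus so that its boundary is essential; the planar case then needs separate treatment. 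The paper, incidentally, gives no proof at all --- the lemma is stated with a \qed\ as an immediate consequence of the definition of the end topology --- so there is no detailed argument there to compare against.
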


As a flare surface \( X \) in \( S \) is a closed subset of \( S \), the inclusion \( X \hookrightarrow S \) is a proper map and hence induces a map \( \Ends(X) \to \Ends(S) \); moreover, the fact that the boundary of \( X \) is connected guarantees that this map is injective.
In particular, it is a homeomorphism onto its image, which allows us to naturally identify \( \Ends(X) \) with \( \lends(\partial X) \).

A consequence of Lemma \ref{lem:clean-basis} and of the fact that the ends space is Hausdorff is the following:
\begin{lemma}\label{lem:separate-ends}
Suppose that $Y$ is a flare surface with at least three ends and $e,e'\in \lends(\partial Y)$ are distinct elements.
Then there is a flare surface $X\subset Y$ so that $e\in \lends(\partial X), e'\notin\lends(\partial X)$. \qed
\end{lemma}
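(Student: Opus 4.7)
The plan is to build $X$ directly as a chosen complementary region inside $Y$, rather than appealing to the subbasis description in Lemma~\ref{lem:clean-basis} abstractly (which need not contain a single subbasic element separating $e$ from $e'$). Identifying $\Ends(Y)$ with $\lends(\partial Y)$ via the inclusion $Y \hookrightarrow S$, the first step will be to use the definition of ends to choose a compact subsurface $K_0 \subset Y$ containing $\gamma := \partial Y$ such that $e$ and $e'$ lie in distinct components of $Y \ssm K_0$. Let $U_0$ denote the component accumulating to $e$, with boundary curves $\alpha_1, \ldots, \alpha_m \subset \partial K_0$.

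The second step will be to enlarge $K_0$ by a ``tubing'' move so that the complementary region accumulating to $e$ has a single boundary circle. I would pick pairwise disjoint arcs $\beta_1, \ldots, \beta_{m-1}$ in $U_0$ with $\beta_i$ joining $\alpha_i$ to $\alpha_{i+1}$ (possible since $U_0$ is connected) and set $K := K_0 \cup N(\beta_1) \cup \ldots \cup N(\beta_{m-1})$, where $N(\beta_i)$ is a regular neighborhood of $\beta_i$ in $U_0$. Because the dual graph of these arcs (vertices $\alpha_i$, edges $\beta_i$) is a tree whose edges each join two distinct boundary circles, the component of $Y \ssm K$ accumulating to $e$ --- call it $X$ --- is connected and has a single boundary curve $\delta$. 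Since all arcs lie inside $U_0$, the end $e'$ still lies in a different component of $Y \ssm K$.

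The third step will be to verify that $X$ is a flare surface in $S$ with the desired end properties. The curve $\delta$ is separating in $S$: it separates $Y$ by construction, and since $Y$ sits on one side of $\gamma$ in $S$, removing $\delta$ disconnects $S$ into $X$ and the rest. The subsurface $X$ is unbounded because $e \in \Ends(X)$. The crucial remaining point is that $\overline{S \ssm X}$ contains $\overline{S \ssm Y}$ as a subsurface; since $Y$ is a flare surface, $\overline{S \ssm Y}$ is not a finite-type surface with at most one puncture, and both its genus and its ends (which lie in $\rends(\gamma) \subset \rends(\delta)$) are inherited by $\overline{S \ssm X}$, so $\overline{S \ssm X}$ is not such a surface either. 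By construction $e \in \lends(\delta)$ and $e' \notin \lends(\delta)$.

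I expect the main obstacle to be the tubing step --- ensuring that after attaching the bands, the particular complementary component of interest is simultaneously connected and has a single boundary curve. The key observation is that the chosen arcs $\beta_i$ form a tree connecting the boundary circles of $U_0$, so cutting along them fuses all $m$ boundary circles into a single curve without disconnecting $U_0$. Once this is set up carefully, the flare condition for $X$ transfers from $Y$ essentially by inspection of the definition.
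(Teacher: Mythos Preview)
Your proof is correct and takes a different, more explicit route than the paper. The paper states the lemma as an immediate consequence of the clopen subbasis of Lemma~\ref{lem:clean-basis} together with the Hausdorff property of $\Ends(S)$ and gives no further argument. Its implicit reasoning is: separate $e$ from $e'$ by some $\lends(\partial X_0)$ coming from the subbasis, then arrange the separating flare surface to sit inside $Y$. Your construction --- pick a compact $K_0 \subset Y$ separating the two ends, then tube the relevant complementary region down to a single boundary circle --- carries out that second step by hand and is entirely self-contained. It also makes the flare condition on $X$ transparent via $\rends(\delta) \supset \rends(\partial Y) \cup \{e'\}$.

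One small correction to your motivating parenthetical: a subbasis of a Hausdorff space \emph{does} always contain a single subbasic set separating any two given points, because if $e$ lies in a finite intersection $S_1 \cap \cdots \cap S_n$ of subbasic sets that excludes $e'$, then already $e' \notin S_i$ for some $i$. The genuine reason Lemma~\ref{lem:clean-basis} alone does not finish the job is not the absence of a single separating subbasic set, but that the flare surface it hands you need not lie inside $Y$; that is exactly the gap your tubing construction fills. Incidentally, your argument never invokes the hypothesis that $Y$ has at least three ends.
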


We now show that inclusion of homologies of flare surfaces gives inclusion of the corresponding spaces of ends.
\begin{lemma}\label{lem:endset}
If  \( X \) and \( Y \) are two flare surfaces such that
\( \H1(X;\Z) \leq \H1(Y;\Z) \),
then
\( \lends(\partial X) \subseteq \lends (\partial Y). \)
Moreover, if $\H1(X;\Z) = \H1(Y;\Z)$, then $\lends(\partial X)=\lends(\partial Y)$.
\end{lemma}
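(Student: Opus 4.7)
The plan is to exploit the fact that $[\partial X] \in \H1(Y;\Z)$ (which follows from $\H1(X;\Z) \leq \H1(Y;\Z)$) to constrain how the ends of the complement $\overline{S\ssm Y}$ are partitioned by $\partial X$, and then to invoke Lemma~\ref{lem:outside-curves} to eliminate the wrong sign. Throughout, I use that $\overline{S\ssm Y}$ is connected, since $\partial Y$ is a single separating simple closed curve.

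First, I would observe that any class $x \in \H1(Y;\Z)$ is realized by a cycle inside $Y$, so $\ai(x,a) = 0$ for every arc $a$ disjoint from $Y$. Since $\overline{S\ssm Y}$ is connected, any two ends $e_1, e_2 \in \rends(\partial Y) = \Ends(\overline{S\ssm Y})$ can be joined by a proper arc $a$ contained in its interior. Applying the observation to $x = [\partial X]$ and combining with the arc criterion used in the proof of Lemma~\ref{lem:twosep}, $e_1$ and $e_2$ lie on the same side of $\partial X$. Hence either $\rends(\partial Y) \subseteq \lends(\partial X)$, or $\rends(\partial Y) \subseteq \rends(\partial X)$; in the latter case, we are done, since it gives $\lends(\partial X) \subseteq \lends(\partial Y)$.

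The main obstacle is to rule out the possibility $\rends(\partial Y) \subseteq \lends(\partial X)$. Assume for contradiction that it holds. Then the subsurface $X \cap \overline{S\ssm Y}$ has end set equal to $\rends(\partial Y) = \Ends(\overline{S\ssm Y})$, so $\overline{S\ssm Y}\ssm X$ contains no ends and must be compact. The flare condition on $Y$ guarantees that $\overline{S\ssm Y}$ is neither a disk nor a once-punctured disk, so one can pick a simple closed curve $\gamma$ in the interior of $\overline{S\ssm Y}$ whose class is not an integer multiple of $[\partial Y]$: for instance, a non-separating curve when $\overline{S\ssm Y}$ has positive genus, and otherwise a loop around one of its isolated planar ends (the flare condition ensures that at least two such ends exist in this case). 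Since $\overline{S\ssm Y}\ssm X$ is compact, $\gamma$ can be isotoped inside $\overline{S\ssm Y}$ to lie in $X \cap \overline{S\ssm Y}$. Then $[\gamma] \in \H1(X;\Z) \leq \H1(Y;\Z)$, while $\gamma$ is a simple closed curve disjoint from $Y$; Lemma~\ref{lem:outside-curves}, applied with $Y$ in the role of the subsurface and $\gamma$ in the role of $\alpha$, forces $[\gamma] = \pm[\partial Y]$, contradicting the choice of $\gamma$.

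For the equality statement, applying the containment just proved to each of the two inclusions $\H1(X;\Z) \leq \H1(Y;\Z)$ and $\H1(Y;\Z) \leq \H1(X;\Z)$ yields $\lends(\partial X) = \lends(\partial Y)$.
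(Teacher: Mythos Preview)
Your arc argument in the first paragraph is correct and gives a clean route to the dichotomy $\rends(\partial Y)\subseteq\lends(\partial X)$ or $\rends(\partial Y)\subseteq\rends(\partial X)$. The paper never isolates this dichotomy; it argues by contradiction directly on $\lends(\partial X)\smallsetminus\lends(\partial Y)$, producing one or two separating curves in $X\smallsetminus Y$ and feeding them to Lemma~\ref{lem:outside-curves}.

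The second paragraph, however, has a genuine gap. The inference ``since $\overline{S\ssm Y}\ssm X$ is compact, $\gamma$ can be isotoped inside $\overline{S\ssm Y}$ to lie in $X\cap\overline{S\ssm Y}$'' is not valid: compactness of a subsurface does not let you push an arbitrary essential curve off it --- a non-separating curve sitting in a compact handle cannot be isotoped out of that handle. Your case split for $\gamma$ is also imprecise: when $\overline{S\ssm Y}$ has genus zero its ends need not be isolated (they could form a Cantor set), so ``a loop around one of its isolated planar ends'' may not exist.

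Both issues disappear if you choose $\gamma$ inside $X\cap\overline{S\ssm Y}$ from the start, exploiting that every end in $\rends(\partial Y)$ already lies in $X$. The flare hypothesis on $Y$ leaves two cases. If $\rends(\partial Y)$ contains a non-planar end $e$, take a non-separating $\gamma$ in a small neighborhood of $e$ contained in $X$; then $[\gamma]$ is non-isotropic, hence not $\pm[\partial Y]$. Otherwise every end in $\rends(\partial Y)$ is planar, so (since a single planar end would be a puncture) there are at least two; take $\gamma$ to be the boundary of a small one-holed neighborhood, contained in $X$, of one such end, so that $\gamma$ separates it from the others and $[\gamma]\neq\pm[\partial Y]$ by Lemma~\ref{lem:twosep}. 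Either way $\gamma\subset X$ and $\gamma\cap Y=\emptyset$, so $[\gamma]\in\H1(X;\Z)\leq\H1(Y;\Z)$ and Lemma~\ref{lem:outside-curves} gives the contradiction. This repaired endgame is close in spirit to the paper's own case analysis.
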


\begin{proof} For a contradiction, suppose that \( \lends(\partial X) \not\subset \lends(\partial Y) \). 

First suppose that \( \lends(\partial X) \smallsetminus \lends(\partial Y) \) has at least two elements and let \( e_1 \) and \( e_2 \) be two such ends.
Using that \( \partial X \cup \partial Y \) is compact, \( \lends(\partial X) \smallsetminus \lends(\partial Y) \) is clopen, and \( \Ends(S) \) is Hausdorff, there exist simple separating closed curves \( \gamma_1 \) and \( \gamma_2 \) in \( X \smallsetminus Y \) such that \( e_i \in \rends(\gamma_i) \), \( \gamma_1 \cap \gamma_2 = \emptyset \) and $e_i \notin \rends(\gamma_j)$ if $i\neq j$.
As \( [\gamma_i] \in \H1(X, \Z) < \H1(Y, \Z) \), by Lemma \ref{lem:outside-curves} we have that \( [\gamma_i] = \pm[\partial Y] \) for \( i \in \{1,2\} \).
But as $e_i\in \rends(\partial Y)$,  we get $[\gamma_1]=[\partial Y]=[\gamma_2]$, which is impossible since \( \lends(\gamma_1) \neq \lends(\gamma_2) \).

So, we may assume that \( \lends(\partial X) \smallsetminus \lends(\partial Y) \) contains a single end, call it \( e \). 
Repeating the same argument, we can find a simple separating closed curve \( \gamma \) contained in \( X \smallsetminus Y \) such that \( e \in \rends(\gamma) \).
Since \( |\lends(\partial X) \smallsetminus \lends(\partial Y)|=1 \), we have that $\rends(\gamma)=\{e\}$.
Again, we find \( [\gamma] = [\partial Y] \) implying \( \Ends(S) \smallsetminus \lends(\partial Y) = \{e\} \).
By the definition of flare surface,  \( e \) must be non-planar; hence,  \( X\smallsetminus Y \) has infinite genus and \( \H1(X, \Z) \) cannot be a subspace of \( \H1(Y, \Z) \), a contradiction.
\end{proof}

This lemma is the motivation for requiring that $\rends(\partial X)$ not be a single puncture for a flare surface \( X \). 
Indeed, if we allowed this, we could, for instance, construct flare surfaces with the same homology but different spaces of ends, as Figure \ref{fig:samehomology-ends} shows.

\begin{figure}[h]
\vspace*{.5cm}
\begin{overpic}{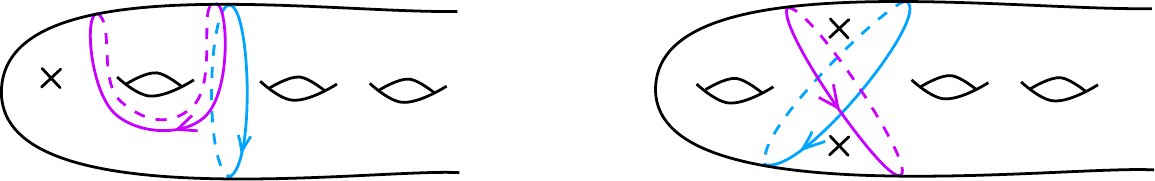}
\put(20,16.5){$\partial X$}
\put(78,16.5){$\partial X$}
\put(6,16){$\partial Y$}
\put(65,16){$\partial Y$}
\end{overpic}
\caption{Two pairs of flare surfaces with the same homology, but different spaces of ends}
\label{fig:samehomology-ends}
\end{figure}

We also note that two disjoint flare surfaces that do not cover the entire space of ends have homologies that intersect trivially:

\begin{lemma}\label{lem:flaredisjoint}
If $X$ and $Y$ are disjoint flare surfaces such that $\lends(\partial X)\cup\lends(\partial Y)\neq \Ends(S)$,
then $\H1(X;\Z)\cap\H1(Y;\Z)=\{0\}.$
Moreover, $\lends(\partial X)\cup\lends(\partial Y)= \Ends(S)$ if and only if $[\partial X]= -[\partial Y]$
\end{lemma}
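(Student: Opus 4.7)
The second statement follows quickly from Lemma \ref{lem:twosep}. Since $X$ and $Y$ are disjoint, no end of $S$ is approached from within both, so $\lends(\partial X) \cap \lends(\partial Y) = \emptyset$. Thus $\lends(\partial X) \cup \lends(\partial Y) = \Ends(S)$ is equivalent to $\rends(\partial X) = \lends(\partial Y)$, i.e.\ $\lends(-\partial X) = \lends(\partial Y)$; applying Lemma \ref{lem:twosep} to $-\partial X$ and $\partial Y$ then gives equivalence with $[\partial X] = -[\partial Y]$.

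For the first statement I would proceed by contradiction. Assume $v \in \H1(X;\Z) \cap \H1(Y;\Z)$ is nonzero, and represent it by cycles $c_X \subset X$ and $c_Y \subset Y$. Since these cycles are homologous in $S$, there is a $2$-chain $\sigma$ with $\partial \sigma = c_X - c_Y$; after perturbing $\sigma$ to be transverse to $\partial X \cup \partial Y$, I decompose $\sigma = \sigma_X + \sigma_Y + \sigma_W$ so that the pieces lie in $X$, $Y$, and $W := S \ssm (X \cup Y)$. Writing $\partial \sigma_X = c_X + b_X$ for a $1$-chain $b_X$ on $\partial X$, since both $\partial \sigma_X$ and $c_X$ are cycles, so is $b_X$; as $\partial X \cong S^1$, this forces $[b_X] = -m[\partial X]$ for some $m \in \Z$. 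Because $\partial \sigma_X$ bounds in $X$, I conclude that $[c_X] = m[\partial X]$ in $\H1(X;\Z)$, and hence $v = m[\partial X]$ in $\H1(S;\Z)$.

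To finish, the hypothesis $\lends(\partial X) \cup \lends(\partial Y) \neq \Ends(S)$ furnishes an end $e_0 \in \rends(\partial X) \cap \rends(\partial Y)$; pick any $e_1 \in \lends(\partial X)$, which exists since $X$ is unbounded. Because $e_0, e_1$ both lie in $\rends(\partial Y)$, I can join $e_1$ to $e_0$ by an arc $a$ in $S \ssm Y$ that crosses $\partial X$ transversally exactly once. Then $\ai(v,a) = 0$, since $v$ has the representative $c_Y \subset Y$, which is disjoint from $a$; on the other hand $\ai(v,a) = m \cdot \ai([\partial X], a) = \pm m$, forcing $m = 0$ and thus $v = 0$, contrary to assumption. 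The main technical step is the chain decomposition yielding $v = m[\partial X]$: one must arrange transversality of $\sigma$ with $\partial X \cup \partial Y$ and carefully track orientations on the boundary circles. An alternative would be to invoke Mayer--Vietoris for a cover of $S$ by neighborhoods of $X \cup Y$ and of $W$; since $H_2(S;\Z) = 0$, the resulting exact sequence directly forces $v$ to be an integer multiple of $[\partial X]$, after which the arc argument concludes as above.
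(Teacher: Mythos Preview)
Your proof is correct, but it takes a genuinely different route from the paper's for the first statement.

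For the second statement, both you and the paper invoke Lemma~\ref{lem:twosep}; your argument is slightly more explicit but essentially the same.

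For the first statement, the paper argues as follows: any $x$ in the intersection is isotropic (since a representative in $X$ pairs trivially with classes outside $X$, and vice versa), and then one chooses a compact star surface $K$ containing $\partial X\cup\partial Y$ with $x\in\H1(K;\Z)$ and exactly one boundary curve $\eta$ of $K$ lying in $W=S\ssm(X\cup Y)$. The remaining boundary curves $\gamma_1,\dots,\gamma_n\subset X$ and $\delta_1,\dots,\delta_m\subset Y$ then form a basis of the isotropic part of $\H1(K;\Z)$; since $x$ must simultaneously be a combination of the $[\gamma_i]$ and of the $[\delta_j]$, it vanishes.

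Your approach instead first shows, via a chain decomposition (or, more cleanly, via Mayer--Vietoris using $H_2(S;\Z)=0$), that any $v\in\H1(X;\Z)\cap\H1(Y;\Z)$ must equal $m[\partial X]$ for some integer $m$, and then kills $m$ with a single arc joining an end of $X$ to an end of $W$ while avoiding $Y$. This is a more homological argument, and it has the pleasant feature of isolating exactly where the hypothesis $\lends(\partial X)\cup\lends(\partial Y)\neq\Ends(S)$ enters (only in producing the arc), whereas the paper's argument uses it to guarantee the curve $\eta$ exists so that the $[\gamma_i],[\delta_j]$ are independent. The paper's version avoids the transversality/chain-splitting technicality you flag, trading it for the combinatorics of boundary curves in a compact subsurface; your Mayer--Vietoris alternative is the cleanest path and would make a fine replacement for the chain argument if you want to avoid that technical step entirely.
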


\begin{proof}
Suppose $x\in\H1(X;\Z)\cap\H1(Y;\Z)$.
Then it must be an isotropic vector: since it can be realized in $X$, it must pair to zero with all vectors that can be realized outside of $X$, but at the same time it can be realized in $Y$ and hence it must pair to zero with all verctors of $\H1(X;\Z)$.
Now we can choose a compact subsurface $K$ with (possibly peripheral) separating boundary components  containing \( \partial X \) and \( \partial Y \) in its interior, and such that \( x \in \H1(K;\Z) \).
Additionally, we choose \( K \) so that there is a single component of \( \partial K \) contained in \( K \ssm (X\cup Y) \).
Let
$$\partial K=\{\gamma_1,\dots,\gamma_n,\delta_1,\dots,\delta_m,\eta\}$$
where $\gamma_1,\dots, \gamma_n$ are curves in $X$,  $\delta_1,\dots, \delta_m$ are curves in $Y$ and $\eta$ is the curve outside $X\cup Y$.
Note that \( \eta \) is homologous to $[\partial X]+[\partial Y]$.

The classes $[\gamma_1],\dots [\gamma_n]$ form a basis for the isotropic subspace of $\H1(K\cap X;\Z)$, the classes $[\delta_1], \dots ,[\delta_m]$ a basis for the isotropic subspace of $\H1(K\cap Y;\Z)$, and all together they form a basis for the isotropic subspace of $\H1(K;\Z)$.
As $x$ must be written at the same time as a linear combination of the $[\gamma_i]$ and as a linear combination of the $[\delta_j]$, it must be the zero vector.

The second part of lemma is now a direct consequence of Lemma \ref{lem:twosep}.
\end{proof}

\begin{figure}[H]
\begin{overpic}{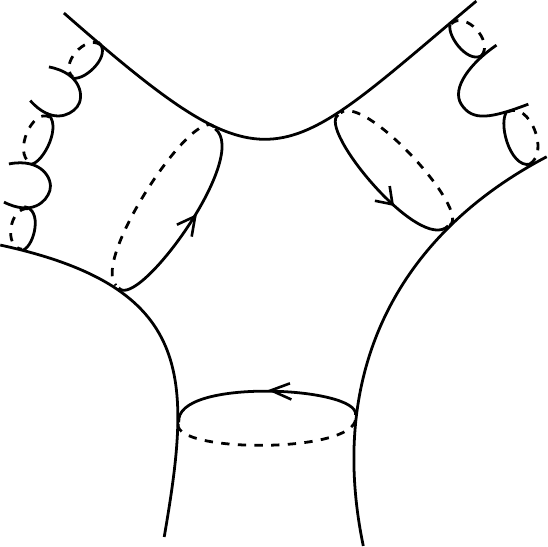}
\put(15,38){$\partial X$}
\put(80,50){$\partial Y$}
\put(47,47){$K$}
\put(-6,57){$\gamma_1$}
\put(-3,75){$\gamma_2$}
\put(2,92){$\gamma_3$}
\put(90,95){$\delta_1$}
\put(100,75){$\delta_2$}
\put(47,32){$\eta$}
\end{overpic}
\caption{Disjoint flare surfaces whose homologies have trivial intersection and the subsurface $K$ in the proof of Lemma \ref{lem:flaredisjoint}.}
\label{fig:flaredisjoint}
\end{figure}

We end this section by showing how nesting at the homology level can
be translated into geometric nesting in the case of flare
surfaces. Since complicated mapping classes can act trivially on the
homology of the surface, geometrically intersecting surfaces can have
nested homologies.
However, we will show that we can find a nested flare 
surface with the correct homology.

We prove first this type of result in the finite-type case.
\begin{lemma}\label{lem:nested-finite-type}
Let $K$ be a finite-type surface and $X', Y \subset K$ two subsurfaces each cut off by a single separating curve (not homotopic to boundary components), so that $\partial K\cap X'\subset Y$, each puncture of $X'$ is a puncture of $Y$, and
\( \H1(X';\Z) < \H1(Y;\Z). \)\\
Then there is a subsurface $X\subset K$ bounded by a single curve such that $X\subset Y$, $\partial K\cap X=\partial K\cap X'$ and \( \H1(X;\Z) = \H1(X';\Z). \)
\end{lemma}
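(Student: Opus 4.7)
My plan is to construct $X$ explicitly inside $Y$ as a subsurface that realizes a geometric basis of $\H1(X';\Z)$. Let $g'$ be the genus of $X'$ and fix a symplectic basis of the non-isotropic part of $\H1(X';\Z)$ given by curves $\alpha_1,\beta_1,\ldots,\alpha_{g'},\beta_{g'}$ lying in $X'$ with the standard symplectic intersection pattern. Since $\H1(X';\Z) < \H1(Y;\Z)$, each class $[\alpha_i]$ and $[\beta_i]$ already lies in $\H1(Y;\Z)$, which in particular forces $g' \leq g(Y)$. Applying the standard symplectic realization result on finite-type surfaces (cf.~\cite[Theorem~6.4]{FM_Primer}) inside $Y$, I find curves $\tilde\alpha_i,\tilde\beta_i \subset Y$ with $[\tilde\alpha_i]=[\alpha_i]$, $[\tilde\beta_i]=[\beta_i]$, and satisfying the standard intersection pattern.

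Next, since by hypothesis $\partial K \cap X' \subset Y$ and every puncture of $X'$ is a puncture of $Y$, every boundary component of $K$ in $X'$ and every puncture of $K$ in $X'$ already lies in $Y$. I form the desired $X$ by taking a regular neighborhood in $Y$ of $\bigcup_i(\tilde\alpha_i \cup \tilde\beta_i)$ together with thin collars of the components of $\partial K$ inside $X'$ and small punctured-disk neighborhoods of each puncture of $X'$, then joining these pieces with disjoint arcs in $Y$, and finally successively merging pairs of remaining boundary components via annular tubes in $Y$ until only a single boundary component is left. Each of these moves (connecting by an arc, or merging two boundary components by an annular tube) is genus-preserving, so the resulting $X$ still has genus exactly $g'$. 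The arcs and tubes are chosen to avoid $\partial Y$, so $X$ lies in the interior of $Y$ and $\partial X$ is a single simple closed curve which is separating in $K$, since it separates the ends in $X$ (i.e.\ the components of $\partial K$ and the punctures of $K$ in $X'$) from the rest.

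Finally I verify the conclusions. By construction $X \subset Y$, the curve $\partial X$ is a single separating curve of $K$ not homotopic to a boundary component, and $\partial K \cap X = \partial K \cap X'$. Moreover, $X$ has the same genus, the same components of $\partial K$, and the same punctures of $K$ as $X'$. Consequently the submodules $\H1(X;\Z)$ and $\H1(X';\Z)$ of $\H1(K;\Z)$ are both generated by the classes $[\tilde\alpha_i]=[\alpha_i]$, $[\tilde\beta_i]=[\beta_i]$, together with the classes of the boundary components of $K$ in $X'$ and the puncture loops, hence they coincide. The main obstacle is the arc-and-tube step: one must ensure that the construction can actually be carried out inside $Y$ so as to produce exactly one separating boundary component while preserving the genus and the topological data of $X$. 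This is a standard cut-and-paste argument, but requires some care to be carried out cleanly inside the finite-type surface $Y$.
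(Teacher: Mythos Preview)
Your approach is essentially the same as the paper's: realize a symplectic basis of the non-isotropic part of $\H1(X';\Z)$ by curves inside $Y$, take a one-holed genus-$g'$ neighborhood $Z$ of these curves, and then connect $Z$ to collars of the $\partial K$--components (and puncture neighborhoods) of $X'$ by disjoint arcs in $Y\smallsetminus Z$. The paper's version is slightly cleaner in that it first fills punctures by boundary components and then just writes $X$ as the regular neighborhood of $Z\cup\bigcup_i(\gamma_i\cup\delta_i)$.

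There is, however, a genuine slip in your write-up: the ``annular tube'' step is both unnecessary and \emph{not} genus-preserving. If $X$ is connected and you glue an embedded annulus in $Y$ along two of its boundary components, the Euler characteristic is unchanged while the number of boundary components drops by two, so the genus goes up by one; this would destroy the equality $\H1(X;\Z)=\H1(X';\Z)$. Fortunately you never need this step: the regular neighborhood of $\bigcup_i(\tilde\alpha_i\cup\tilde\beta_i)$ already has a single boundary component, and each time you attach one arc from the current surface to a new disjoint collar or punctured disk, you add exactly one boundary component of $K$ (or one puncture) and keep exactly one interior boundary circle. So after attaching all the arcs you already have a single interior boundary $\partial X$, and no tubes are required. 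With that correction your argument is complete and coincides with the paper's.
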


\begin{proof}
To simplify the notation, replace all punctures by boundary components.

Let  \( \gamma_1, \ldots, \gamma_r \) be the boundary components of \( K \) contained in \( X' \).
Let \( g \) denote the genus of \( X' \).
For \( i \leq g \), choose \( a_i, b_i \in \H1(X'; \Z) \) so that \( \ai(a_i,b_j) = \delta_{ij} \) and \( \ai(a_i,a_j) = \ai(b_i,b_j) = 0 \) for all \( i,j \in \{1, \ldots, g\} \).
Observe that
\[
\H1(X'; \Z) = \Span\{ a_i, b_i, \gamma_j : i \leq g, j \leq r\}.
\]

Let \( \alpha_i \) and \( \beta_i \) be simple closed curves in \( Y \) homologous to \( a_i \) and \( b_i \), respectively, and whose geometric intersection is the same as the algebraic intersection of the corresponding classes.
Fix a surface \( Z \subset Y \) with a single boundary component containing the \( \alpha_i \) and \( \beta_i \) and such that 
\[
\H1(Z; \Z) = \Span\{a_i, b_i : i \leq g\}.
\]
Choose pairwise disjoint simple arcs \( \delta_1, \ldots, \delta_r \) contained in \( Y \smallsetminus Z \) such that \( \delta_i \) connects \( \partial Z \) and \( \gamma_i \).
Define \( X \) to be a regular neighborhood of \( Z \cup \bigcup_{i=1}^r (\gamma_i \cup \delta_i) \). 
By construction, $X$ satisfies the requirements.
\end{proof}

\begin{prop}\label{prop:realize-nested}
  If $X',Y$ are a flare surfaces such that
  \( \H1(X';\Z) < \H1(Y;\Z), \)
  then there is a flare surface $X$ with $\H1(X;\Z) = \H1(X';\Z)$ and $X\subset Y$.
\end{prop}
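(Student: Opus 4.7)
The plan is to upgrade Lemma~\ref{lem:nested-finite-type} to the infinite-type setting by a compact exhaustion argument, building $X$ as a nested union of finite-type pieces inside $Y$ (in the spirit of the Loch Ness monster construction in Section~\ref{sec:ness}). First I would choose a compact exhaustion $K_1\subset K_2\subset\dots$ of $S$ by connected finite-type subsurfaces with separating boundary components in $S$, none bounding a disk or once-punctured disk, arranged so that $\partial X'\cup\partial Y\subset\Int(K_1)$ and so that $X'_n:=X'\cap K_n$ and $Y_n:=Y\cap K_n$ are connected (absorbing compact fillers if necessary). Using that $\H1(X';\Z)\leq\H1(Y;\Z)$ together with Lemma~\ref{lem:endset}, we have $\lends(\partial X')\subseteq\lends(\partial Y)$. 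Since $\partial Y\subset K_n$, every unbounded component of $S\ssm K_n$ meeting $X'$ is disjoint from $\partial Y$, has ends contained in $\lends(\partial X')\subseteq\lends(\partial Y)$, and therefore lies entirely in $Y$. This gives the key geometric fact that every boundary component of $X'_n$ lying on $\partial K_n$ is contained in $Y_n$, so that the hypotheses of Lemma~\ref{lem:nested-finite-type} are satisfied inside each $K_n$.

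Next I would inductively construct nested finite-type surfaces $X_1\subset X_2\subset\dots$ with $X_n\subset Y_n$, $\H1(X_n;\Z)=\H1(X'_n;\Z)$, and boundary consisting of $\partial X'$ together with the components of $\partial K_n$ contained in $X'_n$. The base case $n=1$ is a direct application of Lemma~\ref{lem:nested-finite-type} inside $K_1$. For the inductive step, I would imitate the proof of that lemma relative to the already-built $X_n$: choose a symplectic basis for a complement of $\H1(X_n;\Z)$ in $\H1(X'_{n+1};\Z)$, realize these new classes by disjoint simple closed curves in $Y_{n+1}\ssm X_n$ with the standard intersection pattern, enclose them in a finite-type subsurface $Z_{n+1}\subset Y_{n+1}\ssm X_n$ with a single boundary component having $\H1(Z_{n+1};\Z)$ equal to the span of the new classes, and then connect $Z_{n+1}$ to $X_n$ and to the new $\partial K_{n+1}$-boundary components of $X'_{n+1}$ by pairwise disjoint arcs inside $Y_{n+1}$. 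A regular neighborhood of this union is the desired $X_{n+1}$.

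Setting $X:=\bigcup_n X_n$ yields an open subsurface of $Y$, and since every cycle lies in some compact piece of $X$, we get
\[\H1(X;\Z)=\bigcup_n\H1(X_n;\Z)=\bigcup_n\H1(X'_n;\Z)=\H1(X';\Z).\]
Each auxiliary boundary component of $X_n$ lies on $\partial K_n$, so it is pushed into the interior of $X_{n+1}$ (and off to infinity in the limit), leaving $\partial X=\partial X'$ as the only boundary component of $X$. Consequently $X$ is unbounded with a single separating boundary, and $\lends(\partial X)=\lends(\partial X')$ ensures $\rends(\partial X)$ is not a single puncture, so $X$ is a flare surface.

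The main obstacle is the inductive step, namely the relative realization of the new homology classes by curves in $Y_{n+1}$ that avoid $X_n$, together with the patching of these into a single finite-type surface with the correct free boundary and homology. This requires that classes outside $\H1(X_n;\Z)$ within $\H1(X'_{n+1};\Z)$ can be represented in $Y_{n+1}\ssm X_n$; this is available because $X_n$ has a single separating boundary inside $Y_{n+1}$ (so the complement carries all classes pairing trivially with $\H1(X_n;\Z)$), and because Lemma~\ref{lem:detectingnonsep1} together with the standard realization results for surfaces lets us realize the chosen symplectic basis geometrically with the correct intersection pattern. Once this relative version of Lemma~\ref{lem:nested-finite-type} is in place, the induction and passage to the limit are routine.
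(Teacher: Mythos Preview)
Your approach is more elaborate than necessary, and the inductive step has a real gap. You correctly isolate the key geometric fact: once a compact star surface $K$ contains $\partial X'\cup\partial Y$, every unbounded complementary component of $K$ lying in $X'$ already lies in $Y$ (via Lemma~\ref{lem:endset}). The paper exploits this observation \emph{once} rather than inductively: with a single such $K$, the unbounded pieces $U_1,\dots,U_r$ of $X'$ outside $K$ are already contained in $Y$, so it suffices to replace the compact piece $K_{X'}:=K\cap X'$ by a homeomorphic copy $K_X\subset K_Y:=K\cap Y$ with the same homology and the same attaching curves on $\partial K$, using Lemma~\ref{lem:nested-finite-type} just one time. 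Then $X:=U_1\cup\dots\cup U_r\cup K_X$ is the desired flare surface. No exhaustion, no limit.

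The gap in your argument is the justification of the inductive step. You assert that ``$X_n$ has a single separating boundary inside $Y_{n+1}$'', but by your own construction $\partial X_n$ consists of $\partial X'$ \emph{together with} the components of $\partial K_n$ lying in $X'$, and these latter curves sit in the interior of $Y_{n+1}$. So $X_n$ has many boundary components inside $Y_{n+1}$, and the claim that classes pairing trivially with $\H1(X_n;\Z)$ are automatically representable in $Y_{n+1}\ssm X_n$ does not follow from the one-boundary argument you invoke. Relatedly, you never verify that $\H1(X'_n;\Z)\leq\H1(Y_n;\Z)$ inside the finite-type piece $K_n$; this inclusion is not automatic from $\H1(X';\Z)\leq\H1(Y;\Z)$ and requires the kind of isotropic-coefficient analysis the paper carries out (showing that any decomposition $v=x+y$ with $x\in\H1(K_Y;\Z)$ forces the residual isotropic part $y$ to be a multiple of $[\partial Y]$). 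Without that, you cannot even apply Lemma~\ref{lem:nested-finite-type} at each stage. Both issues can be repaired, but once you have done so for $n=1$ you are essentially done by the paper's one-step argument, and the remaining induction is superfluous.
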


\begin{proof}
Let $K \subset S$ be a star surface which contains $\partial X' \cup \partial Y$.
Denote by $U_1, \ldots, U_k$ the complementary components of \( K \).
Observe that each $U_i$ is either contained in, or disjoint from, $X'$ as they are disjoint from $\partial X$ (and analogously for $Y$).

Observe that if \( U_i \subset X' \), then \( \H1(U_i; \Z) \leq \H1(X';\Z) \leq \H1(Y;\Z ) \); hence, by Lemma \ref{lem:endset}, \( \lends(U_i) \subset \lends(Y) \).
It follows by the choice of \( K \) that \( U_i \subset Y \).

So, up to reordering, we have:
\[ X' = U_1 \cup \cdots U_r\cup K_{X'} \]
\[ Y = U_1 \cup \dots U_{r+s}\cup K_{Y} \]
where \( K_{X'} = K \cap X' \) and \(  K_{Y} = K \cap Y \). 
Note also that all punctures of $K_{X'}$ are punctures of $K_Y$ as well since $\lends(\partial X')\subset\lends(\partial Y)$.

We now want to show that $ \H1(K_{X'};\Z) < \H1(K_{Y};\Z).$

Since we have seen that $\partial U_j\subset K_Y$ for all $j\leq r$ and that all punctures of $K_{X'}$ are also punctures of $K_Y$, we know that every isotropic vector in $\H1(K_{X'};\Z)$ is also in $\H1(K_{Y};\Z)$.
Look now at any non-isotropic vector $v\in\H1(K_{X'},\Z)<\H1(K;\Z)$. Choose a standard basis for homology of $K$ such that the non-separating curves are either completely contained in $K_Y$ or in $K\ssm K_Y$ and all boundaries of $K_Y$ are part of the basis. If we decompose $v$ with respect to this basis, we get
 $$v=x+y$$
 where $x\in\H1(K_Y;\Z)$ and $y$ is a linear combination of classes of curves in $K\ssm K_Y$. If $y$ were not isotropic, it would have non-zero intersection with some curve in $K\ssm K_Y\subset Y$ and hence so would $v$, a contradiction since $v\in\H1(Y;\Z)$. So
 $$y=\sum_{i=r+s+1}^k c_i [\partial U_i]+\sum_{i=k+1}^{k+p}c_i[\gamma_i],$$
where $p$ is the number of punctures in $K\ssm K_Y$ and each $\gamma_i$ is a curve surrounding one puncture of $K\ssm K_Y$ (and leaving it to the right).
 
If all  $c_i$ are the the same, then $y$ is a multiple of $\partial Y$ and hence belongs to $\H1(K_Y;\Z)$ and so does $v$. Otherwise there is an arc $\alpha\in K\ssm K_Y$ that intersects $y$ non-trivially and hence it intersects $v$ non-trivially, a contradiction.

So also all non-isotropic vectors of $ \H1(K_{X'};\Z)$ belong to $\H1(K_{Y};\Z)$, which shows that $ \H1(K_{X'};\Z) < \H1(K_{Y};\Z)$.

Note that all boundary components of $K$ that are in $K_{X'}$ are in $K_{Y}$ as well. This implies that we can apply Lemma~\ref{lem:nested-finite-type} to find a subsurface $K_X \subset K$ cut off by a single curve, contained in $K_{Y}$, with $\partial K\cap K_X=\partial U_1\cup\dots\cup\partial U_r$ and
\[ \H1(K_X;\Z) = \H1(K_{X'};\Z). \]
Hence
\[ X = U_1 \cup \cdots U_r \cup K_X \]
is the desired subsurface.
\end{proof}

\subsection{The homology end filtration and its ultrafilters}\label{sec:ultrafilters}

The following is the central object of this section.
\begin{defin}\label{defin:filt}
We define
$$\filt=\{V< \H1(S;\Z) \st V=\H1(X,\Z) \mbox{ for some } X\in\hs\}$$
and for every $e\in\Ends(S)$ we define $\filt_e\subset\filt$ to be 
$$\filt_e=\{V< \H1(S;\Z) \st V=\H1(X,\Z) \mbox{ for some } X\in\hs \mbox{  with } e\in\lends(\partial X)\}.$$
We call $\filt$ the \emph{homology end filtration} and we say that an automorphism of $\H1(S;\Z)$ \emph{preserves $\filt$} if it induces a permutation of $\filt$.
\end{defin}
We emphasize that $\filt$ contains only the homology group, without the data
of which flare surface yielded the group.
Note that $\filt$ is endowed with a natural partial order given by inclusion and if $\phi$ is an automorphism of \( \H1(S;\Z) \) preserving the homology end filtration, then it induces an automorphism of $\filt$ as a poset.

We first want to show that if an automorphism of \( \H1(S;\Z) \) preserves the homology end filtration, then it induces a permutation of the set $\{\filt_e\st  e\in\Ends(S)\}$.
This will allow us to define an associated map of the space of ends.

To get the result, we will show how these subsets  of $\filt$ correspond to ultrafilters in $\filt$.
Recall that, if $(P,\leq)$ is a poset, a \emph{filter} is a non-empty subset $F$ of $P$ such that:
\begin{enumerate}
\item for all $ x,y\in F$ there exists $z\in F$ with $z\leq x, z\leq y$;
\item if $x\in F$ and $x\leq y$, then $y\in F$.
\end{enumerate}
A filter $U$ is called an \emph{ultrafilter} if it is a maximal proper filter of $P$, that is, \( U \neq P \) and if  $F$ is a proper filter such that $U\subseteq F$, then \( F = U \).

First, we discuss the homology end filtration and its ultrafilters.

\begin{lemma}\label{lem:ultrafilters}
Given a surface \(S\) satisfying \( (\star) \), $U$ is an ultrafilter if and only if $U=\filt_e$  for some $e\in\Ends(S)$.
\end{lemma}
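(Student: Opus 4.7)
I will verify the two implications of the equivalence separately, using Lemmas \ref{lem:endset} and \ref{lem:clean-basis} as the main technical input, together with compactness of $\Ends(S)$ for the converse.

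For the forward direction, each $\filt_e$ must be shown to be a proper filter and maximal. Upward closure is immediate from Lemma \ref{lem:endset}, since enlarging the underlying flare enlarges its set of left-ends. For directedness, given $V_i = \H1(X_i;\Z) \in \filt_e$ for $i = 1, 2$, I choose a neighborhood $N$ of $e$ contained in $X_1 \cap X_2$ with compact connected boundary (which exists by the definition of an end); condition $(\star)$ is precisely what ensures that after possibly shrinking $N$ further, it may be taken to be a flare, i.e., $\rends(\partial N)$ is neither empty nor a single puncture, so $\H1(N;\Z) \in \filt_e$ lies below both $V_i$. Both properness and maximality rest on the following observation: if flares $X, Y$ satisfy $\lends(\partial X) \cap \lends(\partial Y) = \emptyset$, then no flare $Z$ can satisfy $\H1(Z;\Z) \leq \H1(X;\Z)$ and $\H1(Z;\Z) \leq \H1(Y;\Z)$ simultaneously, since Lemma \ref{lem:endset} would force the non-empty set $\lends(\partial Z)$ into an empty intersection. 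Given a hypothetical $U \supsetneq \filt_e$ containing some $V = \H1(Y;\Z)$ with $e \in \rends(\partial Y)$, Lemma \ref{lem:clean-basis} combined with Hausdorffness of $\Ends(S)$ produces a flare $X$ with $e \in \lends(\partial X) \subset \rends(\partial Y)$; then $\H1(X;\Z) \in \filt_e \subset U$ admits no common lower bound with $V$ in $U$, contradicting the filter axioms. The same observation produces a flare witnessing properness.

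For the converse, let $U$ be any ultrafilter. The collection of clopen subsets $\{\lends(\partial X_V) : V \in U\}$ of $\Ends(S)$ satisfies the finite intersection property: for any finitely many $V_i \in U$, directedness gives a common lower bound $W = \H1(Z;\Z) \in U$, and Lemma \ref{lem:endset} gives $\emptyset \neq \lends(\partial Z) \subset \bigcap_i \lends(\partial X_{V_i})$. Since $\Ends(S)$ is compact, the total intersection contains some end $e$, whence $U \subset \filt_e$, and maximality of $U$ forces equality. The principal obstacle is the directedness step above, where one must extract a flare neighborhood of $e$ inside an arbitrary flare containing it; this is exactly where $(\star)$ is genuinely used, to prevent the complementary side of such a neighborhood from degenerating to a single puncture. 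All remaining steps then flow formally from Lemma \ref{lem:endset}, Lemma \ref{lem:clean-basis}, and compactness.
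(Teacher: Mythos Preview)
Your proposal is correct and follows essentially the same strategy as the paper's proof: upward closure via Lemma~\ref{lem:endset}, directedness by extracting a small flare neighborhood of $e$ inside $X_1\cap X_2$, maximality by exhibiting an element of $\filt_e$ with no common lower bound with the offending $V$, and the converse via the finite intersection property plus compactness of $\Ends(S)$. The only cosmetic difference is that for maximality the paper finds a flare \emph{geometrically disjoint} from the witness of $V$, whereas you find one whose left-end set is disjoint from that of $V$; both reductions collapse to the same contradiction through Lemma~\ref{lem:endset}.
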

\begin{proof}
We show first that for every $e$, $\filt_e$ is an ultrafilter.

Let $V,W\in\filt_e$ and let \( X \) and \( Y \) be flare surfaces such that $V=\H1(X;\Z)$ and $W=\H1(Y;\Z)$.
The intersection $X\cap Y$ contains a flare surface -- say $T$ -- with $e$ as an end. 
Then $\H1(T;\Z)\in\filt_e$ and $\H1(T;\Z)\leq V$, $\H1(T;\Z)\leq W$.
So property (1) of a filter holds.

Property~(2) follows from Lemma \ref{lem:endset}.

Finally, suppose there exists a proper filter $U$ containing $\filt_e$.
Let $V \in U \smallsetminus \filt_e$ and choose a flare-surface $X$ so that $\H1(X;\Z)=V$ and hence $e\notin\lends(\partial {X})$. By the assumption on the topology of \( S \), we can find a flare surface $Y$ containing $e$ and disjoint from $X$.
Property~(1) of a filter guarantees then the existence of a flare surface $Z$ such that
\[ \H1(Z;\Z) \subset \H1(X;\Z)\cap \H1(Y;\Z) \]
contradicting Lemma \ref{lem:endset}.

Conversely, let $U$ be an ultrafilter and consider \( \lends_U = \{\lends(\partial X) : \H1(X;\Z) \in U \} \).
Property (1) of filters together with Lemma \ref{lem:endset} implies that the intersection of any finite collection of sets in \( \lends_U \) is non-empty (i.e.~ \( \lends_U \) has the finite intersection property). 
Hence, as each element of \( \lends_U \) is closed and \( \Ends(S) \) is compact, the intersection 
\(
\bigcap_{C \in \lends_U} C
\)
is non-empty.
If \( e \) is an element in the intersection, then \( U \subset \filt_e \); hence, by maximality, \( U = \filt_e \).
\end{proof}

\subsection{A homeomorphisms of the space of ends}

Let \( \mathcal{U}(\filt) \) be the set of ultrafilters of \( \filt \) and, for each \( V \in \filt \), let \[ N_V = \{ U \in \mathcal{U}(\filt) \st V \in U \}. \] 
We define a topology on \( \mathcal{U}(\filt) \) by declaring the sets of the form \( N_V \) to be a basis.
By Lemma \ref{lem:ultrafilters} and since different ends define different ultrafilters, we have a bijective map \( \theta \co \Ends(S) \to \mathcal{U}(\filt) \) defined by \( \theta(e) = \filt_e \). 

\begin{lemma}
For a surface $S$ satisfying \( (\star) \), the map
\( \theta \) is a homeomorphism. 
\end{lemma}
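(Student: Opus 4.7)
The plan is to verify that $\theta$ is a continuous open bijection. Bijectivity is asserted just before the lemma statement: surjectivity comes from Lemma~\ref{lem:ultrafilters}, and injectivity follows because any two distinct ends can be separated by a subbasic clopen set $\lends(\partial X)$ produced by Lemma~\ref{lem:clean-basis}, so that $\H1(X;\Z)$ lies in exactly one of $\filt_e, \filt_{e'}$. The entire argument then rests on the single identity
\[ \theta(\lends(\partial X)) = N_{\H1(X;\Z)} \qquad \text{for every } X \in \hs, \]
which is immediate from unfolding the definitions: $e \in \lends(\partial X)$ is equivalent to $\H1(X;\Z) \in \filt_e$, which is in turn equivalent to $\filt_e \in N_{\H1(X;\Z)}$.

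Continuity reduces to checking preimages of basic opens in $\mathcal{U}(\filt)$. Any such basic open has the form $N_V$ with $V = \H1(X;\Z)$ for some flare surface $X$, and the identity above gives $\theta^{-1}(N_V) = \lends(\partial X)$, which is clopen in $\Ends(S)$ by Lemma~\ref{lem:clean-basis}.

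For openness, Lemma~\ref{lem:clean-basis} supplies the subbasis $\{\lends(\partial X) : X \in \hs\}$ for the topology on $\Ends(S)$. Since $\theta$ is a bijection it commutes with finite intersections and arbitrary unions, so it suffices to show that each subbasic open maps to an open set of $\mathcal{U}(\filt)$; this is exactly what the key identity provides.

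I do not anticipate any substantial obstacle: all of the nontrivial content has already been packaged into Lemma~\ref{lem:ultrafilters} (identifying ends with ultrafilters) and Lemma~\ref{lem:clean-basis} (the subbasis description of $\Ends(S)$), and what remains is a short formal verification that $\theta$ transports one subbasis onto the other.
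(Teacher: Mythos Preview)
Your approach is essentially the same as the paper's: both reduce everything to the identity $\theta^{-1}(N_{\H1(X;\Z)}) = \lends(\partial X)$ and use it for continuity and openness simultaneously. There is one small but genuine point you have glossed over. You write that ``$e \in \lends(\partial X)$ is equivalent to $\H1(X;\Z) \in \filt_e$'' is ``immediate from unfolding the definitions.'' One direction is indeed definitional (take $X$ itself as the witnessing flare surface), but the converse is not: by definition, $\H1(X;\Z)\in\filt_e$ only gives you \emph{some} flare surface $X'$ with $\H1(X';\Z)=\H1(X;\Z)$ and $e\in\lends(\partial X')$, and to conclude $e\in\lends(\partial X)$ you need Lemma~\ref{lem:endset} (equal homology implies equal end sets). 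The paper's proof makes this explicit. Once that citation is added, your argument is complete and matches the paper.
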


\begin{proof}
Fix \( V \in \filt \) and let \( N = N_V \).
We can then choose \( X \in \hs \) such that \( V = \H1(X; \Z) \). 
Tracing definitions, we have that 
\begin{align*}
\theta^{-1}(N) & = \{ e \in \Ends(S) \st \filt_e \in N \}\\
& = \{ e \in \Ends(S) \st V \in \filt_e\} \\
& = \{ e \in \Ends(S) \st e \in \lends(X)\} \\
& = \lends(X),
\end{align*}
where the third equality is a consequence of Lemma \ref{lem:endset}.
Therefore, \( \theta \) is a continuous bijective map; moreover, the above chain of equalities (in reverse) shows that \( \theta \) is an open map and hence a homeomorphism.
\end{proof}

Ultrafilters of \( \filt \) are preserved under poset automorphisms of \( \filt \) and, as a consequence, any such automorphism of \( \filt \) will induce a homeomorphism of \( \mathcal{U}(\filt) \).
Consequently, given an automorphism \( \phi \) of \( \H1(S; \Z) \) preserving the homology end filtration \( \filt \), we can define the homeomorphism \( f_\phi \co \Ends(S) \to \Ends(S) \) by \( f_\phi(e) = \theta \circ \hat\phi \circ \theta^{-1} \), where \( \hat \phi \co \mathcal{U}(\filt) \to \mathcal{U}(\filt) \) is the homeomorphism of \( \mathcal{U}(\filt) \) defined by \( \hat \phi(N_V) = N_{\phi(V)} \).
We record this in the following proposition:

\begin{prop}
\label{prop:end-homeo}
Let \( S \) be a surface satisfying \( (\star) \).
An automorphism \( \phi \) of \( \H1(S;\Z) \) preserving the homology end filtration induces a homeomorphism \( f_\phi \) of \( \Ends(S) \) defined by the property \( \filt_{f_\phi(e)} = \hat{\phi}(\filt_e)  \). \qed
\end{prop}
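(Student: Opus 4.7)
The plan is to assemble the proposition directly from the machinery built up in the preceding pages — essentially all of the substantive work has already been done, and what remains is a bookkeeping exercise.

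First I would verify that $\phi$ restricts to a poset isomorphism of $(\filt,\subseteq)$. Since $\phi$ is assumed to preserve $\filt$ setwise and is a linear automorphism of $\H1(S;\Z)$, and since containment of submodules is preserved by any linear automorphism, both $\phi$ and $\phi^{-1}$ respect the partial order on $\filt$. A purely order-theoretic check then shows that any such poset isomorphism carries ultrafilters to ultrafilters (the finite meet property and upward closure transfer under any order isomorphism, as does maximality among proper filters). This gives a well-defined bijection $\hat\phi$ on $\mathcal{U}(\filt)$, sending an ultrafilter $U$ to $\{\phi(V) : V \in U\}$, which agrees with the formula $\hat\phi(N_V) = N_{\phi(V)}$ on basic open sets. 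Continuity is immediate because $\hat\phi^{-1}(N_V) = N_{\phi^{-1}(V)}$ is again a basic open set; running the same argument with $\phi^{-1}$ shows $\hat\phi$ is a homeomorphism.

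Next I would transport $\hat\phi$ across the homeomorphism $\theta \co \Ends(S) \to \mathcal{U}(\filt)$ established in the previous lemma by setting $f_\phi := \theta^{-1} \circ \hat\phi \circ \theta$. By Lemma \ref{lem:ultrafilters} combined with the definition of $\theta$, we have $\theta(e) = \filt_e$ for every end $e$, so the defining identity $\filt_{f_\phi(e)} = \hat\phi(\filt_e)$ holds by construction. There is no real obstacle at this step — the genuine content of the argument lives in Lemma \ref{lem:ultrafilters} (identifying ultrafilters of $\filt$ with ends of $S$), which relies on the hypothesis $(\star)$; once that correspondence is granted, passing from a poset automorphism of $\filt$ to a homeomorphism of $\Ends(S)$ is routine.
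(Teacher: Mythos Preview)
Your proposal is correct and follows exactly the route the paper takes: the paper records this proposition with a \qed, the content being the discussion immediately preceding it (that $\phi$ induces a poset automorphism of $\filt$, hence a homeomorphism $\hat\phi$ of $\mathcal{U}(\filt)$, which one conjugates by $\theta$). You have simply spelled out those steps in slightly more detail, including the order-theoretic check that ultrafilters go to ultrafilters and the verification that $\hat\phi$ is a homeomorphism via $\hat\phi^{-1}(N_V)=N_{\phi^{-1}(V)}$.
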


\subsection{The homology end filtration and simple isotropics}\label{sec:simpleis}
As we saw in the last section, an automorphism of the homology end filtration induces a homeomorphism on the space of ends.
Given the correspondence between simple isotropics and clopen subsets of the end space, we expect that any automorphism of the homology end filtration must preserve the set of simple isotropics; indeed:

\begin{prop}
\label{prop:simple-preserving}
Let \( S \) be a surface satisfying \( (\star) \).
If \( \phi \) is an automorphism of \( \H1(S; \Z) \) preserving the homology end filtration, then \( \phi \) preserves the set of simple isotropic elements of \( \H1(S; \Z) \). 
\end{prop}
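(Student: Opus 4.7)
My plan is to show that each simple isotropic class $x$ generates a rank-one subgroup of $\H1(S;\Z)$ which is definable from $\filt$ alone, and therefore preserved (up to primitive generator) by $\phi$.

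Write $x = [\delta]$ for a simple separating curve with $L = \lends(\delta)$ a non-trivial clopen subset of $\Ends(S)$. Under $(\star)$, at least one side of $\delta$ is a flare surface --- otherwise $S$ would be built from two finite-type pieces with at most one puncture each. Let $X$ be such a flare side, with $\lends(\partial X) = L$, and set $V := \H1(X;\Z) \in \filt$.

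In the main case, the other side $Z$ of $\delta$ is also a flare surface: then $X$ and $Z$ are disjoint, $[\partial X] = -[\partial Z]$ by Lemma~\ref{lem:flaredisjoint}, and $x \in V \cap W$ for $W := \H1(Z;\Z) \in \filt$. The key identity is $V \cap W = \Z x$: applying Mayer--Vietoris to the open cover $S = A \cup B$ with $A, B$ open thickenings of $X$ and of the closure of $S \ssm X$, whose overlap deformation-retracts to the connected curve $\partial X$, shows that the kernel of $\H1(X) \oplus \H1(S \ssm X) \to \H1(S)$ is generated by the image of $[\partial X] \in \H1(\partial X;\Z)$; hence $\H1(X;\Z) \cap \H1(S \ssm X;\Z) = \Z[\partial X]$, and since $Z \subset S \ssm X$ we get $V \cap W \subseteq \Z x$. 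In the exceptional case, $Z$ is not a flare surface, which by Definition~\ref{defin:subs} and the non-triviality of $\delta$ forces $X$ itself to be finite-type with exactly one puncture $p$: then $L = \{p\}$, $x$ is the puncture-loop class at $p$, and a small punctured-disk neighborhood $Y_0$ of $p$ is a flare surface (its complement is not finite-type with at most one puncture, by $(\star)$) with $\H1(Y_0;\Z) = \Z[\partial Y_0] = \Z x \in \filt$.

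Either way, $\Z x$ is exhibited through $\filt$; applying $\phi$ yields $\Z\phi(x)$ with the analogous description, where the associated end-sets transform by $f_\phi$ (Proposition~\ref{prop:end-homeo}, Lemma~\ref{lem:endset}). In the main case, any flare realizers $X', Z'$ of $\phi(V), \phi(W)$ have complementary ends $f_\phi(L), f_\phi(L^c)$, so Lemma~\ref{lem:twosep} gives $[\partial X'] = -[\partial Z'] \in \H1(X';\Z) \cap \H1(Z';\Z) = \Z\phi(x)$. In the exceptional case, $\phi(\H1(Y_0;\Z)) \in \filt$ has rank one, hence equals $\H1(Y_0';\Z)$ for a flare surface that Euler characteristic forces to be a punctured-disk neighborhood of $f_\phi(p)$, generated by $[\partial Y_0']$. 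Either way, primitivity of simple classes forces $\phi(x) = \pm[\partial X']$ or $\pm[\partial Y_0']$, a simple isotropic. The principal obstacle is the Mayer--Vietoris step --- a natural complement to Lemma~\ref{lem:flaredisjoint}, exploiting that each flare surface has a single separating boundary curve.
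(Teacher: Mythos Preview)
Your proof is correct and follows essentially the same strategy as the paper: express $\Z x$ in terms of the filtration $\filt$, then transport through $\phi$. The case split (both sides of $\delta$ flare vs.\ one side a once-punctured finite-type piece) matches the paper's.

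A few technical points differ and are worth recording. In the main case, the paper simply asserts that $\H1(X;\Z)\cap\H1(Z;\Z)$ is generated by $[\partial X]$ and later invokes Lemma~\ref{lem:outside-curves}; your Mayer--Vietoris computation is a clean independent justification and makes the step self-contained. In the exceptional case the paper appeals to Lemma~\ref{lem:filt_isolated} (the intersection of the full ultrafilter $\filt_p$ is $\Z x$), whereas you observe more directly that a punctured-disk neighbourhood $Y_0$ of $p$ is itself a flare surface with $\H1(Y_0;\Z)=\Z x\in\filt$; both arguments work, yours is shorter. Finally, the paper treats Jacob's ladder separately, which your argument absorbs into the main case.

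Two small comments on presentation. Your phrase ``Euler characteristic forces $Y_0'$ to be a punctured disk'' is correct but slightly oblique: the real point is that a flare surface with $\H1\cong\Z$ has $\pi_1\cong\Z$ (free groups are determined by their abelianisation), hence is a punctured disk; alternatively, you only need that $[\partial Y_0']$ is a nonzero primitive element of a rank-one module, which already gives $\phi(x)=\pm[\partial Y_0']$. And the final step ``primitivity of simple classes'' deserves a one-line justification: any nonzero simple isotropic pairs to $\pm1$ with an arc joining the two sides, hence is primitive.
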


To prove this, we need the following lemma:

\begin{lemma}\label{lem:filt_isolated}
Let \( S \) be a surface satisfying \( (\star) \) and let $e\in \Ends(S)$.  If $e$ is isolated, then
$$\bigcap_{V\in \filt_e}V=\Span(c)$$
where $c$ is a simple isotropic with $\lends(c)=\{e\}$.
Moreover, \( e \) is isolated if and only if $$\bigcap_{V\in \filt_e}V\neq \{0\}.$$
\end{lemma}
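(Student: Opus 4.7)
The plan is to prove both statements by using the long exact sequence of a pair to translate membership in $\H1(Y;\Z)$ for flares $Y \in \filt_e$ into a concrete statement about the boundary class $[\partial Y]$. Under the hypothesis that $e$ is isolated, $(\star)$ together with Lemma \ref{lem:clean-basis} produces a flare surface $X_0$ with $\lends(\partial X_0) = \{e\}$; set $c = \partial X_0$, so $[c]$ is simple isotropic and $[c] \neq 0$ because both complementary components of $c$ are unbounded (using $|\Ends(S)| \geq 2$ under $(\star)$). The easy containment $\Span(c) \subseteq \bigcap_{V \in \filt_e} V$ holds because any $V = \H1(Y;\Z) \in \filt_e$ admits a sub-flare $X' \subseteq Y$ with $\lends(\partial X') = \{e\}$ (since $e$ is still isolated in the clopen subspace $\lends(\partial Y) \subseteq \Ends(S)$), and Lemma \ref{lem:twosep} then gives $[c] = [\partial X'] \in V$.

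The rest of the argument hinges on the following observation. Let $Y$ be a flare with boundary $c_Y = \partial Y$, let $R_Y = \overline{S \smallsetminus Y}$, and suppose $x \in \H1(Y;\Z)$ has a compact representative $\alpha \subset R_Y$. The long exact sequence of $(S,Y)$, excision, and the long exact sequence of $(R_Y, c_Y)$ combine to give $\H1(S, Y) \cong \H1(R_Y, c_Y) \cong \H1(R_Y) / \Z\cdot [c_Y]_{R_Y}$; the hypothesis $x \in \H1(Y;\Z)$ forces the image of $[\alpha]$ in this quotient to vanish, so $[\alpha]_{R_Y} \in \Z\cdot[c_Y]_{R_Y}$, and pushing forward to $\H1(S;\Z)$ yields $x = k\cdot[c_Y]$ for some $k \in \Z$.

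Fix $x \in \bigcap_{V \in \filt_e} V$ with compact representative supported in a compact subsurface $K$. In the isolated case, take a flare $Y$ with $\lends(\partial Y) = \{e\}$ and $Y \cap K = \emptyset$; the observation together with Lemma \ref{lem:twosep} gives $x = k\cdot[c]$, so $x \in \Span(c)$. In the non-isolated case, pick a sequence of distinct ends $e_j \to e$ and use Lemma \ref{lem:clean-basis} (plus Lemma \ref{lem:separate-ends}) to build flares $Y_j$ with $e \in \lends(\partial Y_j)$, $e_j \notin \lends(\partial Y_j)$, the $\lends(\partial Y_j)$ pairwise distinct, and $Y_j \cap K = \emptyset$. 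The observation yields $x = k_j\cdot[\partial Y_j]$ for every $j$; moreover, for $i \neq j$, picking an end $q$ in the symmetric difference $\lends(\partial Y_i) \triangle \lends(\partial Y_j)$ and pairing with a proper arc from $e$ to $q$ shows $[\partial Y_i]$ and $[\partial Y_j]$ are linearly independent in $\H1(S;\Z)$, forcing $k_i = k_j = 0$ and hence $x = 0$. This, combined with $[c] \neq 0$, resolves the ``moreover'' clause.

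The main obstacle is the construction of the flares $Y_j$ in the non-isolated case: they must be simultaneously disjoint from $K$, realize the required clopen neighborhoods of $e$, and have pairwise distinct boundary $\lends$'s. This combinatorial bookkeeping relies on an iterated application of the subbasis from Lemma \ref{lem:clean-basis} together with Lemma \ref{lem:separate-ends}.
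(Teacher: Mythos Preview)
Your proof is correct and takes a genuinely different route from the paper. The paper handles the non-isolated case by invoking Lemma~\ref{lem:flaredisjoint}: it finds a flare $X$ disjoint from $K$ and a proper sub-flare $Y\subset X$ with $e\in\lends(\partial Y)$, notes that $x\in\H1(\overline{S\ssm X};\Z)\cap\H1(Y;\Z)$, and concludes $x=0$ since those two flares are disjoint and do not cover $\Ends(S)$. For the isolated case the paper is terse, simply asserting that one can ``easily construct explicit flare surfaces whose homologies intersect in the span.'' By contrast, you prove a single observation via the long exact sequence of the pair $(S,Y)$ and excision---essentially a strengthening of Lemma~\ref{lem:outside-curves} from simple closed curves to arbitrary cycles---which forces any $x\in\H1(Y;\Z)$ with a representative in $R_Y$ to lie in $\Z\cdot[\partial Y]$. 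You then apply this uniformly: one flare suffices in the isolated case, and two flares with linearly independent boundary classes (detected by pairing with an arc) finish the non-isolated case. Your approach is more self-contained and makes the isolated case fully explicit; the paper's approach is shorter on the page because the work is outsourced to Lemma~\ref{lem:flaredisjoint}. Note that in your non-isolated argument you really only need two flares $Y_1,Y_2$ with distinct $\lends$, not a full sequence---the arc-pairing step already forces $k_j=0$ once two such flares exist.
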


\begin{proof}
Suppose first that $e$ is isolated. Let $V\in\filt_e$ and let $X$ be a flare surface such that $V=\H1(X;\Z)$ and $e\in\lends(\partial X)$. Then there is a separating simple closed curve $\alpha\subset X$ such that $\lends(\alpha)=\{e\}$. Then by Lemma \ref{lem:twosep}, $c=[\alpha]$ and thus $\Span(c)\subset V$.
To show that the intersection is not bigger than the span of $c$, one can easily construct explicit flare surfaces whose homologies intersect in the span.

Suppose now that $e$ is not isolated and let $x\in\bigcap\{V \st V\in\filt_e\}$. Then $x$ can be realized by some loop $\alpha$ in a compact subsurface $K$ with separating boundary curves. Let $X$ be a flare surface containing $e$ with $|\Ends(S)\ssm\lends(\partial X)|\geq 2$ and disjoint from $K$. Furthermore, let $Y\subset X$ be another flare surface with $\lends(\partial X)\ssm\lends(\partial Y)\neq \emptyset$ and \( e \in \lends(\partial Y) \). Then $x\in \H1(S\ssm X;\Z)$, because we can realize it in $K$, and $x\in\H1(Y;\Z)$ since $x$ is in the homology of all flare surfaces containing $e$. By Lemma \ref{lem:flaredisjoint}, $\H1(S\ssm X;\Z)\cap\H1(Y;\Z)=\{0\}$; hence,  $x=0$.
\end{proof}

\begin{proof}[Proof of Proposition \ref{prop:simple-preserving}]
Let \( a \in \H1(S; \Z) \) be a simple isotropic. 
First suppose that \( S \) is Jacob's ladder.
In this case, \( \pm a \) are the unique primitive homology classes contained in the homology of every flare surface; hence, \( \phi(a) = \pm a \) and \( \phi \) preserves the unique simple isotropic element.

We can now assume that \( S \) is not Jacob's ladder.
Now suppose first that neither $\lends(a)$ nor $\rends(a)$ is a single isolated puncture. Then there is a flare surface \( X \) with \( [\partial X] = a \).
Let \( Y \) be the closure of the complement of \( X \) in \( S \), so that \( Y \) is a flare surface satisfying \( [\partial Y] = -a \). 
The intersection \( \H1(X; \Z) \cap \H1(Y; \Z) \) is generated by \( a \).
Note that for every end \( e \) of \( S \), either \( \H1(X; \Z) \in \filt_e \) and \( \H1(Y;\Z) \notin \filt_e \) or vice versa. 

As \( \phi \) induces a homeomorphism on the space of ends (Proposition \ref{prop:end-homeo}), it follows that for every end \( e \) of \( S \), either \( \phi(\H1(X; \Z)) \in \filt_e \) and \( \phi(\H1(Y;\Z)) \notin \filt_e \) or vice versa. 
Let \( X' \) and \( Y' \) be such that \( \H1(X';\Z) = \phi(\H1(X; \Z)) \) and \( \H1(Y'; \Z) = \phi(\H1(Y;\Z)) \). 
We know that \( \H1(X'; \Z) \cap \H1(Y'; \Z) \) is cyclic and generated by \( \phi(a) \).
Further, \( \Ends(S) = \lends(\partial X') \sqcup \lends(\partial Y') \) implying that \( [\partial Y'] = -[\partial X'] \).
It follows from Lemma \ref{lem:outside-curves} that in fact \( \H1(X'; \Z) \cap \H1(Y';\Z) \) is generated by \( [\partial X'] \).
Therefore, \( \phi(a) = \pm [\partial X'] \) and hence is a simple isotropic.

If $\lends(a)$ or $\rends(a)$ is a single isolated puncture, say $p$, then
$$\Span(a)=\bigcap_{V\in\filt_p}V,$$
hence
$$\Span(\phi(a))=\bigcap_{V\in\filt_{f_\phi(p)}}V.$$
By Lemma \ref{lem:filt_isolated}, $\phi(a)$ is a simple isotropic.
\end{proof}

The proof of Proposition \ref{prop:simple-preserving} yields two additional lemmas that we record.

\begin{lemma}\label{lem:boundariestoboundariesv1}
Let \( S \) be a surface satisfying \( (\star) \) and 
let \( \phi \) be an automorphism of \( \H1(S; \Z) \) preserving the homology end filtration.
If $X$ and $Y$ are flare surfaces satisfying $$\phi(\H1(X;\Z))=\H1(Y;\Z),$$ then $\phi([\partial X])=\pm[\partial Y]$. \qed
\end{lemma}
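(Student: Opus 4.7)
The plan is to reduce to the two-case argument already carried out in the proof of Proposition~\ref{prop:simple-preserving}: in both cases that proof actually produces a flare surface whose boundary class equals $\pm\phi([\partial X])$ and whose homology equals $\phi(\H1(X;\Z))$, so the desired conclusion will follow by identifying that surface with $Y$ via Lemmas~\ref{lem:endset} and~\ref{lem:twosep}.

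First I would record the end-level consequence of the hypothesis: the equality $\phi(\H1(X;\Z))=\H1(Y;\Z)$, together with the equivalence $\H1(W;\Z)\in\filt_e\iff e\in\lends(\partial W)$ for a flare surface $W$ and the defining property $\filt_{f_\phi(e)}=\hat\phi(\filt_e)$ of the induced homeomorphism, forces $f_\phi(\lends(\partial X))=\lends(\partial Y)$. This identification is the only bookkeeping needed to match the flare surface output by Proposition~\ref{prop:simple-preserving} with the given $Y$.

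Next I would split into the same two cases as in Proposition~\ref{prop:simple-preserving}. If $\lends(\partial X)$ is a single isolated puncture $p$, Lemma~\ref{lem:filt_isolated} gives $\Span([\partial X])=\bigcap_{V\in\filt_p}V$; applying $\phi$ and using that $f_\phi(p)$ is again isolated, the same lemma produces $\Span(\phi([\partial X]))=\Span(c)$ for a simple isotropic $c$ with $\lends(c)=\{f_\phi(p)\}=\lends(\partial Y)$, and Lemma~\ref{lem:twosep} then forces $c=[\partial Y]$. Otherwise neither $\lends(\partial X)$ nor $\rends(\partial X)$ is a single isolated puncture (the latter being excluded by the flare-surface definition), so $X$ and the closure $X^c$ of its complement are both flare surfaces with $\H1(X;\Z)\cap\H1(X^c;\Z)=\Span([\partial X])$. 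Applying $\phi$ and choosing flare surfaces $X',Y'$ with $\H1(X';\Z)=\phi(\H1(X;\Z))=\H1(Y;\Z)$ and $\H1(Y';\Z)=\phi(\H1(X^c;\Z))$, the end-level analysis of the first paragraph yields $\lends(\partial X')\sqcup\lends(\partial Y')=\Ends(S)$, and Lemma~\ref{lem:outside-curves} pins down $\H1(X';\Z)\cap\H1(Y';\Z)$ as cyclic, generated by $[\partial X']$; since this intersection is also $\Span(\phi([\partial X]))$, we obtain $\phi([\partial X])=\pm[\partial X']$. Lemmas~\ref{lem:endset} and~\ref{lem:twosep} finally identify $[\partial X']$ with $[\partial Y]$ from the equality of homologies.

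The only genuinely non-formal step is the use of Lemma~\ref{lem:outside-curves} to cut the intersection down to a cyclic subgroup generated by a boundary class — precisely the step already handled inside the proof of Proposition~\ref{prop:simple-preserving} — so I do not expect any new obstacle. The remainder of the proof is the purely formal translation between flare surfaces with equal homology via Lemma~\ref{lem:twosep}.
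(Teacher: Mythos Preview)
Your proposal is correct and follows exactly the approach the paper intends: the lemma is stated with a \qed\ immediately after, and the surrounding text says it is ``yielded'' by the proof of Proposition~\ref{prop:simple-preserving}. Your two cases are precisely the two cases of that proof, and your bookkeeping via $f_\phi(\lends(\partial X))=\lends(\partial Y)$ together with Lemmas~\ref{lem:endset} and~\ref{lem:twosep} is the right way to match the auxiliary flare surface produced there with the given $Y$; nothing further is needed.
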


\begin{lemma}
\label{lem:single-coherence}
Let \( S \) be a surface satisfying \( (\star) \).
If \( \phi \) is an automorphism of \( \H1(S; \Z) \) preserving the homology end filtration and \( a \) is a simple isotropic, then either \[ \lends(\phi(a)) = f_\phi(\lends(a)) \] or \[ \rends(\phi(a)) = f_\phi(\rends(a)). \] \qed
\end{lemma}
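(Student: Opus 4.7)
The plan is to extract and extend the case analysis used in the proof of Proposition~\ref{prop:simple-preserving}, tracking not just which side of $a$ ends up where, but how the partition $\{\lends(a), \rends(a)\}$ is matched to $\{\lends(\phi(a)), \rends(\phi(a))\}$ under $f_\phi$. The key new observation, essentially immediate from Proposition~\ref{prop:end-homeo}, is that whenever $X$ is a flare surface and $X'$ is any flare surface with $\H1(X';\Z) = \phi(\H1(X;\Z))$, then
$$ f_\phi(\lends(\partial X)) = \lends(\partial X'). $$
Indeed, by Lemma~\ref{lem:endset} one has $e \in \lends(\partial X)$ iff $\H1(X;\Z) \in \filt_e$, and the defining property $\filt_{f_\phi(e)} = \hat\phi(\filt_e)$ then translates this into $f_\phi(e) \in \lends(\partial X')$.

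First I would treat the case where neither $\lends(a)$ nor $\rends(a)$ is a single isolated puncture and $S$ is not Jacob's ladder. Following the proof of Proposition~\ref{prop:simple-preserving}, realize $a = [\partial X]$ for a flare surface $X$ and let $Y$ be the closure of $S \ssm X$, itself a flare surface with $[\partial Y] = -a$. Choose flare surfaces $X', Y'$ with $\H1(X';\Z) = \phi(\H1(X;\Z))$ and $\H1(Y';\Z) = \phi(\H1(Y;\Z))$; as in that proof, $\Ends(S) = \lends(\partial X') \sqcup \lends(\partial Y')$, and Lemma~\ref{lem:boundariestoboundariesv1} gives $\phi(a) = \pm [\partial X']$. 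By the observation above, $f_\phi(\lends(a)) = \lends(\partial X')$ and $f_\phi(\rends(a)) = \lends(\partial Y') = \rends(\partial X')$. Hence $\{\lends(\phi(a)), \rends(\phi(a))\} = \{\lends(\partial X'), \rends(\partial X')\} = \{f_\phi(\lends(a)), f_\phi(\rends(a))\}$, delivering one of the two alternatives of the lemma depending on the sign in $\phi(a) = \pm[\partial X']$.

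The puncture case, where one of $\lends(a)$ or $\rends(a)$ is a single isolated puncture $p$, is handled using Lemma~\ref{lem:filt_isolated}: from $\Span(a) = \bigcap_{V \in \filt_p} V$ and the fact that $\phi$ carries $\filt_p$ to $\filt_{f_\phi(p)}$ (by Proposition~\ref{prop:end-homeo}), one reads off $\Span(\phi(a)) = \bigcap_{V \in \filt_{f_\phi(p)}} V$, which by Lemma~\ref{lem:filt_isolated} is spanned by a simple isotropic $c$ with $\lends(c) = \{f_\phi(p)\}$. Thus $\phi(a) = \pm c$, and again the unordered partition of $\phi(a)$ matches that of $a$ under $f_\phi$. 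Jacob's ladder, excluded above, is handled by a direct check: it carries a unique pair of simple isotropics $\pm a$, each separating its two ends, so the two-element partition either matches or flips.

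The only genuine subtlety is the sign ambiguity in $\phi(a) = \pm[\partial X']$ supplied by Lemma~\ref{lem:boundariestoboundariesv1}: this is exactly what forces the disjunction in the conclusion, since the plus sign preserves the orientation of the partition while the minus sign reverses it. Once that sign is accounted for, the proof is essentially a careful unpacking of the correspondence between ends of $S$ and ultrafilters of $\filt$.
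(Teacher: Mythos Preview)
Your argument is correct and is precisely what the paper intends: the authors give no separate proof, recording the lemma as a direct byproduct of the proof of Proposition~\ref{prop:simple-preserving}, and you have written out exactly that byproduct, together with the key observation (immediate from Proposition~\ref{prop:end-homeo} and Lemma~\ref{lem:endset}) that $f_\phi(\lends(\partial X)) = \lends(\partial X')$ whenever $\phi(\H1(X;\Z)) = \H1(X';\Z)$.

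One caveat, concerning the statement rather than your reasoning: as printed, the two displayed alternatives are logically equivalent (since $\lends$ and $\rends$ are complementary and $f_\phi$ is a bijection, $\lends(\phi(a)) = f_\phi(\lends(a))$ holds iff $\rends(\phi(a)) = f_\phi(\rends(a))$ does), so taken literally the lemma would assert the first equality for every simple isotropic---which is false in general and is exactly the issue addressed by Lemma~\ref{lem:preserving-is} and Theorem~\ref{mainthm}(iii). What you actually establish, and what the paper clearly intends, is that the \emph{unordered} partitions agree; the second display is presumably meant to read $\rends(\phi(a)) = f_\phi(\lends(a))$. Your phrase ``one of the two alternatives \ldots\ depending on the sign'' is consistent with this intended reading, but note that under the minus sign neither alternative \emph{as printed} holds.
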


In the next lemma, we see that the homeomorphism of the space of ends induced by an automorphism of the homology end filtration either preserves the notion of ``to the left" or reverses it coherently across all simple isotropics. 

\begin{lemma}
\label{lem:preserving-is}
Let \( S \) be a surface satisfying \( (\star) \) and let  \( \phi \) be an automorphism of \( \H1(S; \Z) \) preserving the homology end filtration.
If there exists a simple isotropic \( a \) such that \( \lends(\phi(a)) = f_\phi(\lends(a)) \), then \( \lends(\phi(b)) = f_\phi(\lends(b)) \) for every simple isotropic \( b \) in \( \H1(S;\Z) \). 
\end{lemma}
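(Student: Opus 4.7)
The plan is to encode Lemma~\ref{lem:single-coherence} as a sign function $\sigma$ on simple isotropic classes and show that $\sigma$ is globally $+$ once it attains $+$ somewhere. For every simple isotropic $c$, set $\sigma(c)=+$ if $\lends(\phi(c))=f_\phi(\lends(c))$ and $\sigma(c)=-$ otherwise. Using $\phi(-c)=-\phi(c)$ and $\lends(-c)=\rends(c)$, one checks $\sigma(-c)=\sigma(c)$, so $\sigma$ depends only on the unoriented separating curve underlying $c$. For a clopen $L\subsetneq\Ends(S)$ with non-empty complement, let $a_L$ denote the simple isotropic class with $\lends=L$; this is well-defined up to sign by Lemma~\ref{lem:twosep}.

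The core ingredient is the following additivity property: \emph{if $u,v$ are simple isotropic classes with $\lends(u)\cap\lends(v)=\emptyset$ and $\lends(u)\cup\lends(v)\neq\Ends(S)$, then $u+v$ is itself simple isotropic with $\lends(u+v)=\lends(u)\cup\lends(v)$, and $\sigma(u)=\sigma(v)=\sigma(u+v)$.} The geometric statement is proved by choosing disjoint separating representatives of $u$ and $v$ (using flare surfaces on whichever side is legal) and joining them by a thin band, producing a single separating curve whose class is $u+v$ and whose set of left ends is $\lends(u)\cup\lends(v)$. For the sign, suppose toward a contradiction that $\sigma(u)=+$ while $\sigma(v)=-$. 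Setting $M=f_\phi(\lends(u))$ and $N=f_\phi(\lends(v))$, we have $\phi(u)=a_M$ and $\phi(v)=-a_N$, so $\phi(u+v)=a_M-a_N$. Since $M\cap N=\emptyset$ and $M\cup N\neq\Ends(S)$, we may pick an arc $\eta$ from an end in $M$ to an end in $N$; direct signed-crossing counting yields $\ai(a_M,\eta)$ and $\ai(-a_N,\eta)$ the same sign, so $|\ai(\phi(u+v),\eta)|=2$. This contradicts the fact that the algebraic intersection of any separating simple closed curve with any arc is at most $\pm1$, noting that $\phi(u+v)$ is simple isotropic by Proposition~\ref{prop:simple-preserving}. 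Hence $\sigma(u)=\sigma(v)$; in the case of matching signs, $\phi(u+v)=\pm a_{M\cup N}=\pm a_{f_\phi(\lends(u+v))}$ yields $\sigma(u+v)=\sigma(u)$.

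With additivity in hand, the result follows by casework on the position of $\lends(b)$ relative to $\lends(a)$. The cases $\lends(b)=\lends(a)$ and $\lends(b)=\rends(a)$ give $b=\pm a$ directly. If $\lends(a)\cap\lends(b)=\emptyset$ and $\lends(a)\cup\lends(b)\neq\Ends(S)$, apply additivity to $a$ and $b$. If $\lends(a)\subsetneq\lends(b)$, set $c=a_{\lends(b)\setminus\lends(a)}$ so that $a+c=b$, and additivity gives $\sigma(b)=\sigma(a)=+$; the case $\lends(b)\subsetneq\lends(a)$ is symmetric. In the remaining incomparable case, all three of $\lends(a)\cap\lends(b)$, $\rends(a)\cap\lends(b)$, and $\lends(a)\cap\rends(b)$ are non-empty clopen sets with non-empty complements, so $u=a_{\lends(a)\cap\lends(b)}$, $v=a_{\rends(a)\cap\lends(b)}$, and $w=a_{\lends(a)\cap\rends(b)}$ are well defined. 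Both decompositions $a=u+w$ and $b=u+v$ meet the hypotheses of the additivity lemma; two applications then transfer $\sigma=+$ from $a$ to $u$ and from $u$ to $b$.

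The main obstacle is the sign part of the additivity lemma: ruling out that $a_M-a_N$, with disjoint $M,N$ and proper union in $\Ends(S)$, is a simple isotropic. The nontrivial input is producing an arc whose algebraic intersection with $a_M-a_N$ equals $\pm2$, contradicting the bound $|\ai(c,\eta)|\le1$ satisfied by every simple isotropic $c$ and every arc $\eta$.
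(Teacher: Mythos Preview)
Your proof is correct and follows essentially the same strategy as the paper's: both reduce to showing that a suitable sum of two simple isotropics with incompatible signs cannot itself be simple isotropic, contradicting Proposition~\ref{prop:simple-preserving}. Your version is organized more explicitly via the sign function $\sigma$ and an additivity lemma, and you justify the non-simplicity step via the arc-intersection bound of Lemma~\ref{lem:detectingsep}, whereas the paper's more compressed two-case argument leaves that step implicit.
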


\begin{proof}
We proceed by contradiction: suppose there exists \( b \) such that \( f_\phi(\lends(b)) = \rends(\phi(b)) \).
We have two cases: either \( \lends(a) \cap \lends(b) = \emptyset \) or \( \lends(a) \cap \lends(b) \neq \emptyset \).
In the first case, \( a+b \) is a simple istropic.
It follows that in this case \( \lends(\phi(a)) \cap \lends(\phi(b)) \neq \emptyset \), but then \( \phi(a+b) = \phi(a)+\phi(b) \) is not a simple isotropic, which contradicts Proposition \ref{prop:simple-preserving}.

In the second case, we choose simple isotropics \( a' \) and \( b' \) such that \( \lends(a') = \lends(a) \ssm \lends(b) \) and \( \lends(b') = \lends(b) \ssm \lends(a) \).
Observe that \( a-a' \) is a simple isotropic.
If \( f_\phi(\lends(a')) = \rends(\phi(a')) \), then \[ \rends(\phi(a')) = f_\phi(\lends(a')) \subset f_\phi(\lends(a)) = \lends(\phi(a)) ;\] hence, \( \phi(a-a') = \phi(a)-\phi(a') \) is not a simple isotropic, again contradicting Proposition \ref{prop:simple-preserving}.
Therefore, \( f_\phi(\lends(a')) = \lends(\phi(a')) \) and, by a similar argument, \( f_\phi(\lends(b')) = \lends(\phi(b')) \).
This puts us back in the first case and arriving at another contradiction.
\end{proof}

Together, Lemma \ref{lem:boundariestoboundariesv1} and Lemma \ref{lem:preserving-is}, yield the following:

\begin{lemma}\label{lem:boundariestoboundaries}
Let \( S \) be a surface satisfying \( (\star) \) and  let  \( \phi \) be an automorphism of \( \H1(S;\Z) \) preserving the homology end filtration.
If $X$ and $Y$ are flare surfaces satisfying $$\phi(\H1(X;\Z))=\H1(Y;\Z),$$ then $\phi([\partial X])=[\partial Y]$. \qed
\end{lemma}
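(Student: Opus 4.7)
The strategy is to pin down the sign ambiguity in Lemma~\ref{lem:boundariestoboundariesv1}'s conclusion $\phi([\partial X]) = \epsilon [\partial Y]$ by translating the sign into an end-set condition and then invoking Lemma~\ref{lem:preserving-is}. The first step is to record the end-set identity
\[ f_\phi(\lends(\partial X)) = \lends(\partial Y), \]
which follows directly from the ultrafilter description of $f_\phi$ in Proposition~\ref{prop:end-homeo} together with the hypothesis $\phi(\H1(X;\Z))=\H1(Y;\Z)$: an end $e$ belongs to $\lends(\partial X)$ iff $\H1(X;\Z) \in \filt_e$ iff $\H1(Y;\Z) = \phi(\H1(X;\Z)) \in \hat\phi(\filt_e) = \filt_{f_\phi(e)}$ iff $f_\phi(e) \in \lends(\partial Y)$.

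Next, I would decode the sign $\epsilon$ in end-set terms. Using the orientation convention on flare boundaries, $\lends(\epsilon[\partial Y])$ equals $\lends(\partial Y)$ when $\epsilon = +1$ and $\rends(\partial Y)$ when $\epsilon = -1$. Comparing with the displayed identity, $\epsilon = +1$ is equivalent to
\[ \lends(\phi([\partial X])) = f_\phi(\lends([\partial X])), \]
which is exactly the ``left-preserving'' hypothesis of Lemma~\ref{lem:preserving-is} applied to the simple isotropic $a=[\partial X]$ (which is simple isotropic by the flare hypothesis and Proposition~\ref{prop:simple-preserving}).

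Finally, Lemma~\ref{lem:preserving-is} upgrades this pointwise equality to a uniform property: the sign $\epsilon$ is an invariant of $\phi$, independent of the specific pair $(X,Y)$. The main obstacle is therefore not the pair-independence but rather ensuring that this universal sign is $+1$ rather than $-1$: note that $-\phi$ preserves $\filt$ and induces the same $f_\phi$, so the two choices $\phi$ and $-\phi$ correspond to the two branches of the dichotomy in Lemma~\ref{lem:preserving-is} and yield opposite values of $\epsilon$. The content of the lemma is precisely that, once one is on the left-preserving branch of Lemma~\ref{lem:preserving-is}, the sign $\epsilon = +1$ is realized uniformly for every flare pair $(X, Y)$ with $\phi(\H1(X;\Z)) = \H1(Y;\Z)$, giving $\phi([\partial X]) = [\partial Y]$.
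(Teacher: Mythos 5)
Your argument is correct and is essentially the paper's own proof, which consists of nothing more than combining Lemma \ref{lem:boundariestoboundariesv1} (giving $\phi([\partial X])=\pm[\partial Y]$) with Lemma \ref{lem:preserving-is}; your ultrafilter computation of $f_\phi(\lends(\partial X))=\lends(\partial Y)$ and the end-set decoding of the sign are exactly the mechanism behind those two lemmas. Your closing caveat is also accurate: as literally stated the lemma cannot exclude the sign $-1$ (for instance $-\Id$ preserves $\filt$ and satisfies $\phi(\H1(X;\Z))=\H1(X;\Z)$, yet sends $[\partial X]$ to $-[\partial X]$), so the conclusion $\phi([\partial X])=[\partial Y]$ implicitly relies on the left-preserving normalization from Lemma \ref{lem:preserving-is}, which is indeed available where the lemma is applied, since the hypotheses of Theorem \ref{thm:iff} include $f_\phi(\lends(c))=\lends(\phi(c))$ for some simple isotropic $c$.
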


We now strengthen Proposition \ref{prop:end-homeo} by detecting the topological types of ends.

\begin{prop}
\label{prop:end-homeo-planar}
Let \( S \) be a surface satisfying \( (\star) \).
If \( \phi \) is an automorphism of \( \H1(S;\Z) \) preserving the homology end filtration, then the homeomorphism \( f_\phi \) of \( \Ends(S) \) induced by \( \phi \) preserves the set of planar ends.
\end{prop}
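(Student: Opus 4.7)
The plan is to detect planar ends through the interaction of the homology end filtration \( \filt \) with the set of simple isotropic classes; both are preserved by \( \phi \) (the former by hypothesis, the latter by Proposition \ref{prop:simple-preserving}). I will show that a flare surface \( X \) has genus zero if and only if \( \H1(X;\Z) \), regarded as a subgroup of \( \H1(S;\Z) \), is generated by simple isotropic elements. Writing
\[
\filt^p = \{V \in \filt : V \text{ is generated, as a subgroup of } \H1(S;\Z), \text{ by simple isotropic elements}\},
\]
the characterization identifies \( \filt^p \) with the set of homologies of genus-zero flare surfaces, and then an end \( e \) is planar exactly when \( \filt_e \cap \filt^p \neq \emptyset \). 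Since \( \phi \) preserves \( \filt^p \), the induced homeomorphism \( f_\phi \) will preserve planar ends.

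For the characterization, suppose first that \( X \) is a genus-zero flare surface. Every essential simple closed curve in \( X \) is separating in \( X \) (since \( X \) has genus zero) and hence separating in \( S \) (since \( X \) has a single boundary component in \( S \)), so its class is a simple isotropic in \( \H1(S;\Z) \). Taking a compact exhaustion \( X = \bigcup_n X_n \) by connected genus-zero subsurfaces with essential simple-closed-curve boundaries, the boundary components of each \( X_n \) generate \( \H1(X_n;\Z) \); passing to the direct limit, their images generate \( \H1(X;\Z) \), yielding a generating set of simple isotropic classes. Conversely, if \( X \) has positive (possibly infinite) genus, then \( X \) contains a pair of simple closed curves \( \alpha, \beta \) with \( i(\alpha,\beta) = 1 \), so \( \ai([\alpha],[\beta]) = \pm 1 \) in \( \H1(S;\Z) \), showing that \( [\alpha] \in \H1(X;\Z) \) is not isotropic. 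Since any subgroup of \( \H1(S;\Z) \) generated by isotropic elements consists entirely of isotropic elements, \( \H1(X;\Z) \) cannot be generated by simple isotropics.

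Given the characterization, \( \phi \) restricts to a bijection on \( \filt^p \): if \( V \in \filt^p \) is generated by simple isotropics \( \{s_i\} \), then \( \phi(V) \in \filt \) is generated by \( \{\phi(s_i)\} \), each of which is simple isotropic by Proposition \ref{prop:simple-preserving}, and the same argument applied to \( \phi^{-1} \) gives bijectivity. Next, if \( e \) is a planar end of \( S \), then \( e \) admits a genus-zero flare neighborhood \( X \), so \( \H1(X;\Z) \in \filt_e \cap \filt^p \). Conversely, if \( V = \H1(X;\Z) \in \filt_e \cap \filt^p \) with \( X \) a genus-zero flare surface, then \( e \in \lends(\partial X) \) is an end of the genus-zero surface \( X \), and hence admits a genus-zero neighborhood in \( S \), so \( e \) is planar. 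Using \( \hat\phi(\filt_e) = \filt_{f_\phi(e)} \) from Proposition \ref{prop:end-homeo} together with the preservation of \( \filt^p \), the condition \( \filt_e \cap \filt^p \neq \emptyset \) transports to \( \filt_{f_\phi(e)} \cap \filt^p \neq \emptyset \); hence \( f_\phi \) sends planar ends to planar ends, and applying the same reasoning to \( \phi^{-1} \) finishes the proof.

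The main technical point is the forward direction of the characterization: for an infinite-type genus-zero flare surface one must carefully verify, via a direct-limit argument on a compact exhaustion, that \( \H1(X;\Z) \) is indeed generated by classes of essential simple closed curves in \( X \). The remaining steps are transparent consequences of the definition of \( \filt \), Proposition \ref{prop:simple-preserving}, and Proposition \ref{prop:end-homeo}.
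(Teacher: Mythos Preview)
Your proof is correct and follows the same approach as the paper: both characterize planar ends as those $e$ for which some $H \in \filt_e$ is isotropic (equivalently, generated by simple isotropics), and then invoke Proposition~\ref{prop:simple-preserving} to show this property is preserved by $\phi$. The paper's version is terser---it simply says ``$e$ is planar if and only if there exists $H \in \filt_e$ that is an isotropic subspace''---but the appeal to Proposition~\ref{prop:simple-preserving} there implicitly uses exactly the fact you spell out, that such an $H$ is generated by simple isotropics.
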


\begin{proof}
Observe that \( e \) is planar if and only if there exists \( H \in \filt_e \) such that \( H \) is an isotropic subspace of \( \H1(S; \Z) \).
By Proposition \ref{prop:simple-preserving},  we have that \( \phi(H) \) is isotropic if and only if \( H \) is isotropic; hence if \( e \) is planar, then so is \( f_\phi(e) \). 
\end{proof}

\subsection{Characterizing the image of $\rho_S$ using simple isotropic elements}\label{sec:charwithis}
As we have seen that if a map preserves the filtration, then it preserves the set of simple isotropic vectors, it is natural to ask whether the converse holds.
More generally, we can ask if the homology end filtration requirement in Theorem \ref{mainthm} can be replaced by a condition on the action of the automorphism on the simple isotropic classes.

By considering a finite-type surface with genus $g$ and $n$ punctures (for $n\geq3$), it is easy to see that preserving the algebraic intersection form and the set of simple isotropic classes does not suffice to guarantee realizability: a mapping class cannot send a curve surrounding a single puncture to a curve surrounding two.

Nevertheless, in certain cases one can use  the set of simple isotropic classes to describe the image of the homology representation.
Namely, we can define a partial order on this set, where we say that $[\gamma]\leq [\delta]$ if $\lends(\gamma)\subset \lends(\delta)$. Lemma \ref{lem:twosep} shows that this order is well defined.
If we ask that an automorphism preserves $\ai$ and the poset of simple isotropic classes, the example mentioned above is ruled out.
In fact, one could show -- with techniques similar to those we use -- that this is enough to characterize automorphisms induced by mapping classes in the case of surfaces with at least two ends and at most one non-planar end. On the other hand, as soon as there is more than one non-planar end, this characterization does not hold: for instance, consider Jacob's ladder with the basis depicted in Figure \ref{fig:basisJacob}. The automorphism fixing all non-isotropic basis vectors and sending $[\gamma]$ to $-[\gamma]$ preserves $\ai$ and the poset of simple isotropic classes, but it cannot be given by a mapping class.

\section{Proof of the main theorem}\label{sec:proof}
The goal of this section is to prove Theorem \ref{mainthm}, characterizing the image of the mapping class group in the group of automorphisms of $\H1(S;\Z)$. We will show:
\begin{theor}\label{thm:iff}
Let \( S \) be either a planar surface with at least four ends, of finite positive genus with at least three ends,  or an infinite-genus surface different from the Loch Ness monster and the once-punctured Loch Ness monster.
The image of $\rho_S$ is the group of automorphisms of $\H1(S;\Z)$ preserving both $\ai$ and $\filt$ and such that there is a simple isotropic class $c$ for which
$$f_\phi(\lends(c))=\lends(\phi(c)).$$
\end{theor}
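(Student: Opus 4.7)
The forward implication is a routine topological check. If $\phi = \rho_S(f)$ for some mapping class $f$, then $\phi$ preserves $\ai$ by naturality of algebraic intersection under homeomorphism; any flare surface $X$ is sent by a representative of $f$ to a flare surface $f(X)$, so $\phi$ preserves $\filt$; the homeomorphism $f_\phi$ of $\Ends(S)$ induced by $\phi$ must agree with the action of $f$ on ends; and for any representative $\gamma$ of a simple isotropic class $c$, the equality $\lends(\phi(c)) = \lends([f(\gamma)]) = f(\lends(\gamma)) = f_\phi(\lends(c))$ is automatic.

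For the converse, fix $\phi$ satisfying the three conditions. By Lemma \ref{lem:preserving-is} the coherence condition upgrades from the single class $c$ to every simple isotropic class, and by Proposition \ref{prop:end-homeo-planar} the homeomorphism $f_\phi$ preserves the partition of $\Ends(S)$ into planar and non-planar ends. The strategy is a back-and-forth exhaustion in the style of Richards, mirroring the proof of Theorem \ref{thm:monsters} but with star surfaces replacing the one-holed subsurfaces there and with extra bookkeeping to keep track of how $f_\phi$ permutes the complementary flares.

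Concretely, I would fix an exhaustion $\{\Sigma_n\}$ of $S$ by star surfaces (which exists because $S$ satisfies $(\star)$) and build inductively sequences of star surfaces $A_n, B_n$ and homeomorphisms $f_n \co A_n \to B_n$ such that: (i) $\Sigma_n \subset A_n$ for odd $n$ and $\Sigma_n \subset B_n$ for even $n$; (ii) $f_n$ extends $f_{n-1}$; (iii) $(f_n)_*$ agrees with $\phi|_{\H1(A_n;\Z)}$; and (iv) for every complementary flare $U$ of $A_n$ in $S$ there is a unique complementary flare $V$ of $B_n$ with $\H1(V;\Z) = \phi(\H1(U;\Z))$ and $\lends(\partial V) = f_\phi(\lends(\partial U))$. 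The direct limit then yields a homeomorphism of $S$ inducing $\phi$. For the base case $A_1 = \Sigma_1$, Proposition \ref{prop:realize-nested} realizes $\phi$ applied to the homology of each complementary flare $U_i$ as the homology of a flare $V_i$; Lemma \ref{lem:boundariestoboundaries} gives $\phi([\partial U_i]) = [\partial V_i]$; the $V_i$ are pairwise disjoint and partition the ends of $S$ by the $f_\phi$-image of the partition given by the $U_i$, using Lemma \ref{lem:flaredisjoint} and the fact that $f_\phi$ is a homeomorphism; and $B_1 := S \smallsetminus \bigcup_i V_i$ is a star surface whose genus and puncture count match those of $A_1$ by Proposition \ref{prop:end-homeo-planar} and by comparing dimensions of non-isotropic parts. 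Then $f_1$ is constructed by sending a standard symplectic basis of $\H1(A_1;\Z)$ (produced via Lemma \ref{lem:detectingnonsep1}) to its $\phi$-image, realized as a standard symplectic basis of $\H1(B_1;\Z)$, together with the boundary matching prescribed by (iv).

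The induction step is a direct adaptation, alternating which side is enlarged to absorb $\Sigma_{n+1}$ exactly as in the proof of Theorem \ref{thm:monsters}. The main obstacle is the bookkeeping: each enlargement of $A_n$ requires not only matching symplectic classes on the $B_n$ side (as in the Loch Ness case) but also matching how the new collection of complementary flares of $A_{n+1}$ partitions $\Ends(S)$ via $f_\phi$, and how the separating boundary classes map under $\phi$. This is controlled by property (iv) together with repeated applications of Proposition \ref{prop:realize-nested} and Lemma \ref{lem:boundariestoboundaries}; the requirement that $f_\phi$ preserves planar versus non-planar ends ensures that the genera and puncture counts of the added regions agree on both sides. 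The alternation between enlarging $A$ and $B$ is essential so that boundary components of the images do not accumulate in the limit, exactly as noted in the remark following Theorem \ref{thm:monsters}; without it, the direct limit map might fail to be surjective onto $S$.
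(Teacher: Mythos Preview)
Your approach is the paper's: a back-and-forth exhaustion by star surfaces with exactly the same four invariant conditions (your (iv) is the paper's condition (4)), driven by Proposition~\ref{prop:realize-nested} and Lemma~\ref{lem:boundariestoboundaries}.

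There is one genuine gap. In the base case you assert that the flares $V_i$ are pairwise disjoint ``using Lemma~\ref{lem:flaredisjoint}'', but that lemma runs the wrong way: it deduces trivial intersection of homologies from disjointness of flares, not the converse. Proposition~\ref{prop:realize-nested} only produces a flare with prescribed homology \emph{nested inside a given flare}; applying it independently to each $\phi(\H1(U_i;\Z))$ gives no control on how the resulting $V_i$ sit relative to one another---they may well intersect geometrically even though their end sets are disjoint. The paper's remedy is an iterative construction: first realize the non-isotropic part of $\phi(\H1(A_1;\Z))$ in a one-holed subsurface $F_1$; find $V_1 \subset S\smallsetminus F_1$ via Proposition~\ref{prop:realize-nested}; join $F_1$ to $V_1$ by a simple arc and take a regular neighborhood to obtain a new one-holed region $F_2$; check that $\phi(\H1(U_2;\Z)) \subset \H1(S\smallsetminus F_2;\Z)$ and apply Proposition~\ref{prop:realize-nested} again to nest $V_2$ inside the flare $S\smallsetminus F_2$; and so on. This arc-and-absorb trick is what actually forces the $V_i$ to be disjoint, and the same device is needed inside each complementary flare at the induction step.
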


Using this, the main theorem (which we now recall) easily follows.\\

\begin{duplicate}[\ref{mainthm}]
  Let $S$ be either a finite-type surface with at least four punctures or an infinite-type surface different from the Loch Ness monster and the once-punctured Loch Ness monster. If
  $\phi$ is an automorphism of $\H1(S;\Z)$ preserving both $\ai$ and $\filt$,
  then the following hold:
  \begin{enumerate}[i)]
  \item Exactly one of $\phi$ and $-\phi$ lies in the image of $\rho_S$.
  \item $\phi$ preserves homology classes defined by separating simple
    closed curves.
  \item $\phi$ determines a homeomorphism $f_\phi$ of the space of ends of $S$, 
    and $\phi$ lies in the image of $\rho_S$ exactly if
    \[ f_\phi(\lends([\delta])) = \lends(\phi([\delta])) \]
    for some (hence any) simple separating closed curve $\delta$ which is non-trivial in 
    $H_1(S;\Z)$.
  \end{enumerate}
\end{duplicate}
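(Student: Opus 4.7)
The plan is to deduce Theorem~\ref{mainthm} directly from Theorem~\ref{thm:iff} together with the structural results of Section~\ref{sec:endfiltration}. Since Theorem~\ref{thm:iff} already identifies the image of $\rho_S$ as exactly those automorphisms preserving $\ai$ and $\filt$ for which the coherence condition $f_\phi(\lends(c)) = \lends(\phi(c))$ holds for some simple isotropic $c$, the three items (i)--(iii) amount to translating between simple isotropics and separating simple closed curves and to handling the overall sign ambiguity.

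For (ii), I would appeal to Proposition~\ref{prop:simple-preserving}: any automorphism preserving $\filt$ permutes the simple isotropic classes of $\H1(S;\Z)$, and by Lemma~\ref{lem:detectingnonsep1} a non-zero simple class is isotropic precisely when it is represented by a separating simple closed curve, which gives the claim. For (iii), the homeomorphism $f_\phi$ of $\Ends(S)$ is provided by Proposition~\ref{prop:end-homeo}; any non-trivial separating simple closed curve $\delta$ yields a non-zero simple isotropic $[\delta]$, so Lemma~\ref{lem:preserving-is} delivers the ``some $=$ any'' clause, and Theorem~\ref{thm:iff} supplies the ``if and only if''.

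The substantive step is (i). First I would note that $-\phi$ also preserves $\ai$ (by bilinearity) and $\filt$ (since $\filt$ consists of $\Z$-submodules, which are fixed under negation), and it induces the same end-homeomorphism $f_{-\phi} = f_\phi$. Lemma~\ref{lem:single-coherence} gives a dichotomy for each simple isotropic $a$: either $\lends(\phi(a)) = f_\phi(\lends(a))$, or $\phi$ reverses the sides of $a$, i.e.\ $\rends(\phi(a)) = f_\phi(\lends(a))$. Using $\lends(-\phi(a)) = \rends(\phi(a))$, the latter alternative is precisely the coherence condition for $-\phi$ on $a$. Combined with Lemma~\ref{lem:preserving-is}, this shows that exactly one of $\phi$ and $-\phi$ is coherent on every simple isotropic, and Theorem~\ref{thm:iff} then places that one in the image of $\rho_S$. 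To upgrade ``at least one'' to ``exactly one'', I would verify that $-\mathrm{Id}$ is never in the image: it fixes every subspace so $f_{-\mathrm{Id}} = \mathrm{Id}$, yet $\lends(-a) = \rends(a) \neq \lends(a)$ for any non-zero simple isotropic $a$, and the hypotheses on $S$ guarantee that such an $a$ exists (there is always a separating simple closed curve with unbounded complement on both sides).

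The only point requiring real care will be the orientation bookkeeping in the sign argument --- specifically, confirming that the two alternatives produced by Lemma~\ref{lem:single-coherence} are genuinely interchanged under the substitution $\phi \mapsto -\phi$, so that Lemma~\ref{lem:preserving-is} can be applied to exactly one of $\phi, -\phi$. Once this is set up cleanly, all three items fall out of Theorem~\ref{thm:iff} and the end-filtration machinery already in place, with no additional surface-topological input required.
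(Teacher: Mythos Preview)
Your proposal is correct and follows the same route as the paper: parts (ii) and (iii) are obtained from Proposition~\ref{prop:simple-preserving}, Proposition~\ref{prop:end-homeo}, Lemma~\ref{lem:preserving-is}, and Theorem~\ref{thm:iff} just as in the paper's proof, and your more detailed treatment of (i) simply unpacks the paper's terse ``follows from (iii) using Lemma~\ref{lem:preserving-is}'' via the same sign-swap and the observation that $-\mathrm{Id}$ fails the coherence condition. One small remark: the dichotomy you attribute to Lemma~\ref{lem:single-coherence} (either $\lends(\phi(a))=f_\phi(\lends(a))$ or $\rends(\phi(a))=f_\phi(\lends(a))$) is the intended content and is what the proof of Proposition~\ref{prop:simple-preserving} actually yields, so your use of it is sound.
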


\begin{proof}
Part ii) follows from Proposition \ref{prop:simple-preserving} and Theorem \ref{thm:iff}.

The fact that $\phi$ induces a homeomorphism of the space of ends is given by Proposition \ref{prop:end-homeo}. This together with Theorem \ref{thm:iff} yields part iii).

Part i) follows from iii) using Lemma \ref{lem:preserving-is}.
\end{proof}

Let us then prove Theorem \ref{thm:iff}.

\begin{proof}[Proof of Theorem \ref{thm:iff}]
  We prove the theorem for infinite-type surfaces. In the case of a finite-type surface, we just need to adapt the base case below.

  It is clear that any mapping class induces an automorphism
  satisfying the conditions in the statement, so we want to show that
  an automorphims with these properties is induced by a mapping class.
 Let $\phi$ be such an automorphism.

  Fix an exhaustion $\Sigma_k$ of $S$ by star surfaces such that no
    component of \( \partial \Sigma_{k+1} \) is homotopic to any
    component of \( \partial \Sigma_k \).

  The goal is to construct two nested sequences of star surfaces
  $\{A_k\}$ and $\{B_k\}$, together with homeomorphisms $f_k\co A_k\to B_k$ such
  that:
  \begin{enumerate}
  \item $\Sigma_k\subset A_k$ for every odd $k$ and $\Sigma_k\subset
    B_k$ for every even $k$;
  \item \( f_{k}|_{A_{k-1}} = f_{k-1} \);
  \item $f_k$ induces $\phi\big|_{\H1(A_k;\Z)}$.
  \end{enumerate}
  As in the proof of Theorem \ref{monster}, condition (2)
  implies the direct limit of the \( f _n \) exists and condition (1)
  implies that both sequences are exhaustions and hence
  $\displaystyle{f=\varinjlim f_n}$ is a homeomorphism of $S$.
  Condition (3) then guarantees that \( f \) acts on homology as
  $\phi$.

  We will construct surfaces and maps satisfying the additional
  condition:
  \begin{enumerate}[(4)]
  \item For every component $X$ of $S\ssm A_k$ the following
    holds: if $Y$ is the component of $S\ssm B_k$ bounded by
    $f_k(\partial X)$, then $\phi(\H1(X;\Z))=
    \phi(\H1(Y;\Z))$.
  \end{enumerate}

\begin{figure}[h]
\begin{overpic}[width=.8\textwidth]{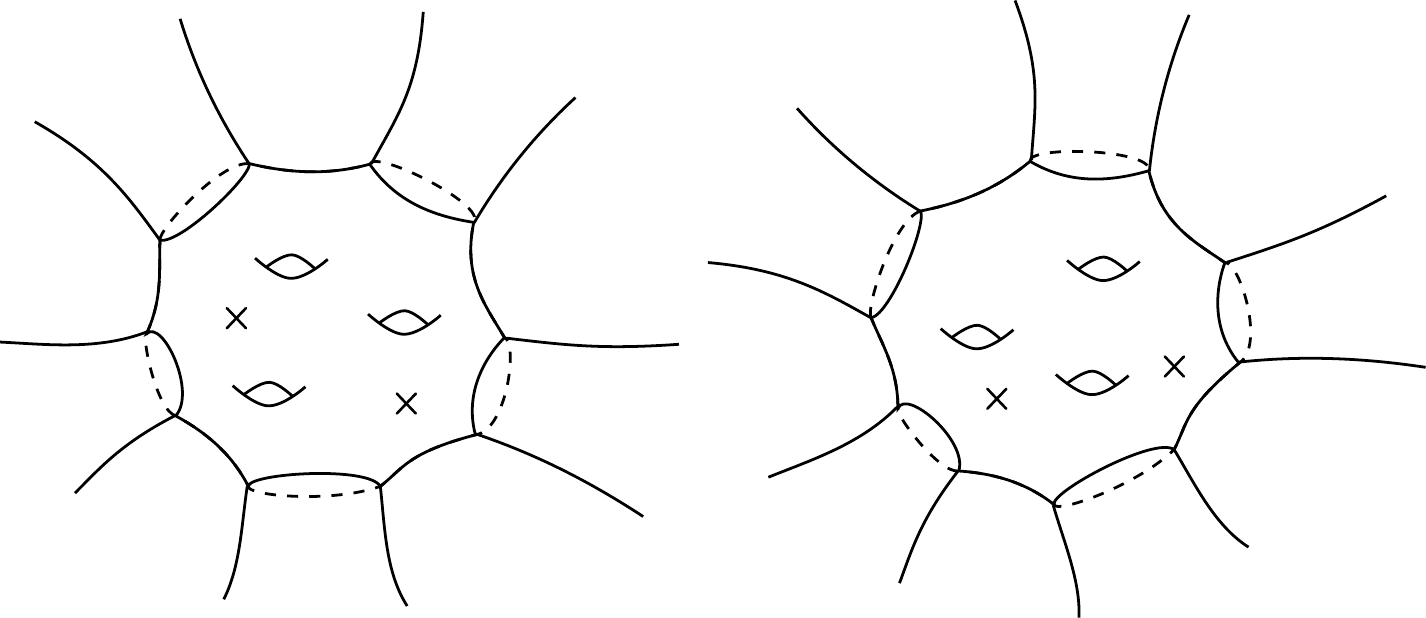}
\put(20,18){$A_k$}
\put(10,31){$X$}
\put(75,19){$B_k$}
\put(82,30){$f_k(\partial X)$}
\put(76,35){$Y$}
\end{overpic}
\caption{The subsurfaces in condition (4)}
\end{figure}

\textbf{Base case:}
Let $A_1 = \Sigma_l$ where $i_1$ is the first index so that $\Sigma_i$ has either more than one boundary component,
or contains more than one puncture. That such an index exists follows from the fact that $S$ is neither the Loch Ness
nor the once-punctured Loch Ness surface. Let $g_1$ be the genus of $A_1$.

Choose $2g_1$ non-separating curves $\alpha_1, \beta_1,\dots,\alpha_{g_1}, \beta_{g_1}$ with the standard symplectic intersection pattern.
Realize the classes $\phi([\alpha_i]),\phi([\beta_i])$ by non-separating curves $\alpha_i',\beta_i'$ with the standard symplectic intersection pattern (if $g_1=0$, we do not do anything in this step).

Choose a subsurface $F_1$ of genus $g_1$ with one boundary component and containing the $\alpha_i',\beta_i'$ as well as the images via $f_\phi$ of the punctures of $A_1$ (if $g_1=0$ and $A_1$ has no punctures, just set $F_1=\emptyset$).

Denote by $X_1, \ldots, X_b$ the complementary components of $A_1=\Sigma_1$.
By construction, for every \( j \in \{1, \ldots, b \} \), we have \( \phi(\H1(X_j; \Z)) \subset \H1( S \ssm F_1 ; \Z ) \).
Moreover, if \( j \neq j' \), then \( \phi( \H1(X_j ; \Z)) \cap \phi ( X_{j'} ; \Z ) \) is trivial, unless \( b =2  \), in which case the intersection is generated by \( \phi([\partial X_1]) \).
The goal is to realize these homology groups by disjoint flare surfaces:
By Proposition \ref{prop:realize-nested} there exists a flare surface \( Y_1 \) contained in \( S \ssm F_1 \) such that $\H1(Y_1 ; \Z ) = \phi(\H1(X_1;\Z))$.
Choose a simple arc \( \eta \) in the closure of \( S \ssm F_1 \) connecting \( \partial F_1 \) and \( \partial Y_1 \) and define \( F_2 \) to be a regular neighborhood of \( F_1 \cup \eta \cup Y_1 \)  (if $g_1=0$ and $A_1$ has no punctures, just set $F_2=Y_1$).
As $\phi(\H1(X_2,\Z))\subset \H1(S\ssm F_2;\Z)$, we can find a second flare surface $Y_2\subset S\ssm F_2$ with $\phi(\H1(X_2;\Z))=\H1(Y_2;\Z)$.
Repeating this process, we obtain \( Y_1, \ldots, Y_b \) such that \( Y_j \cap Y_{j'} = \emptyset \) whenever \( j \neq j' \) and such that \( \H1( Y_j; \Z ) = \phi( \H1( X_j ; \Z ) ) \) for every \( j \in \{1, \ldots, b\} \).

\begin{figure}[t]
\begin{overpic}{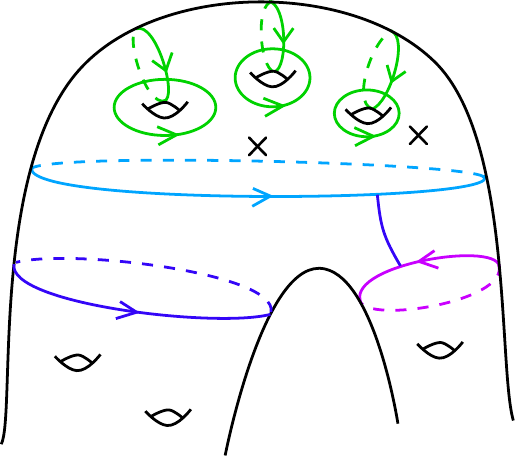}
\put(45,42){$\partial F_1$}
\put(65,40){$\eta$}
\put(100,34){$\partial Y_1$}
\put(30,18){$\partial F_2$}
\end{overpic}
\caption{Constructing the $Y_j$}
\end{figure}

Define \( B_1 = S \ssm \left( \bigcup_{j=1}^b Y_j \right) \).
Then, as \( \lends( \partial Y_j ) = f_\phi(\lends( \partial X_j ) ) \) by Lemma \ref{lem:boundariestoboundaries}, we have 
\[
\bigcup \lends(\partial Y_i )\cup f_{\phi}(\{\mbox{punctures of }A_1\})=\Ends(S)
\]
and hence \( B_1 \)  is a star surface with \( b \) boundary components and as many punctures as $A_1$.
Now, since \( B_1 \) and \( A_1 \) have the same number of boundary components, the same number of punctures, and isomorphic homology, we can conclude that \( B_1 \) is homeomorphic to \( A_1 \).
Now choose a homeomorphism $f_1\co A_1\to B_1$ mapping $\alpha_i,\beta_i,\partial X_j$ to $\alpha_i',\beta_i',\partial Y_j$, respectively, and agreeing with $f_\phi$ on the punctures of $A_1$. 
By construction, the triple \( (A_1, B_1, f_1) \) satisfies conditions (1)-(4). 

\textbf{Induction step:} Suppose we have $A_k,B_k$ and $f_k$ satisfying conditions (1)-(4).

If $k$ is even, let $K\geq k+1$ be such that $A_k\subsetneq \Sigma_K$ and set $A_{k+1}:=\Sigma_K$.

Let $X$ be a complementary component of $A_k$, and let $Y$ be the complementary component of $B_k$ bounded by $f_k(\partial X)$. Let $q$ be the genus of $X \cap A_{k+1}$. Choose curves $\alpha_1, \beta_1, \dots,\alpha_q, \beta_q$ in $X\cap A_{k+1}$ with the standard symplectic intersection pattern.

By condition (4), we can realize the classes $\phi([\alpha_1]),\dots,\phi([\beta_q])$ by curves  $\alpha_1',\dots,\beta_q'$ in $Y$ with the standard intersection pattern.
Choose a separating curve in \( Y \) bounding a surface \( F \) of genus \( q \) containing \( \alpha_1',\beta_1', \dots,\alpha_q', \beta_q'\) and the images of the punctures of \( X \cap A_{k+1} \) under \( f_\phi \).

Let $X_1, \ldots, X_r$ be the flare surfaces $X_i$ which are the
components of $X\ssm A_{k+1}$.

Arguing as in the base case, we can find disjoint flare surfaces $Y_i$
in $Y$ so that $\phi(\H1(X_i;\Z))=\H1(Y_i;\Z)$ for all $i$.  The
boundaries $\partial Y_i$, together with $\partial Y$, cut off a
compact subsurface $K_X\subset Y$ homeomorphic to $X\cap A_{k+1}$.

We can therefore choose a homeomorphisms $f_X^{k+1}\co X\cap
A_{k+1}\to K_X$ sending $\alpha_1,\beta_1, \dots,\alpha_q, \beta_q$ to
$\alpha_1',\beta_1',\dots,\alpha_q', \beta_q'$, agreeing with $f_k$ on $\partial X$
and with $f_{\phi}$ on the set of punctures of $X\cap A_{k+1}$, and
sending $\partial X_i$ to $\partial Y_i$ for all $i$.

Since all complementary flare surfaces $X$ of $A_k$ are disjoint, we can repeat this process
independently on all of them, obtaining sets $K_X$ and maps $f^{k+1}_X$.
Now let $B_{k+1}$ be the union of $B_k$ with all  $K_X$, that is,
\[
B_{k+1} = B_k \cup \left( \bigcup_{X\in \pi_0(S \ssm A_k)} K_X \right).
\] 
We form $f_{k+1}$ by
gluing the maps $f^{k+1}_X$ to $f_k$:
\[ f_{k+1} = f_k \cup \left(\bigcup f_X^{k+1}\right), \]
which is possible since $f_k, f_X^{k+1}$ have pairwise disjoint supports.
From the construction it immediately follows that $f_{k+1}$ is a homeomorphism between $A_{k+1}$ and $B_{k+1}$ which has the desired properties.

\begin{figure}[t]
\vspace{.5cm}
\begin{overpic}[width=.8\textwidth]{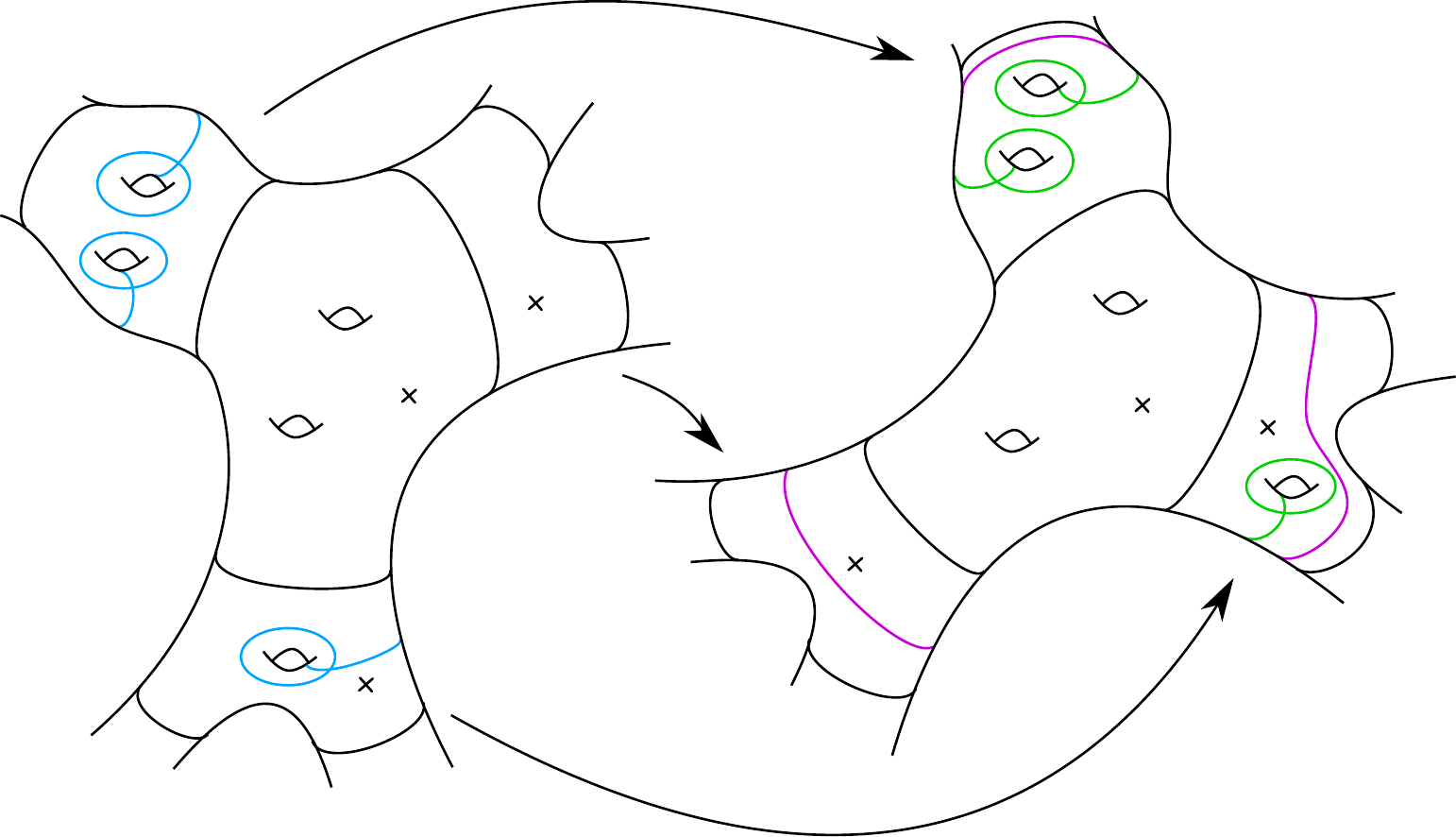}
\put(8,30){$\partial X$}
\put(40,59){$f^X_ {k+1}$}
\put(77,54){\textcolor[RGB]{200,0,220}{$\partial F$}}
\put(81,44){$\partial Y=f_k(\partial X)$}
\end{overpic}
\vspace{.5cm}
\caption{From $k$ to $k+1$ in the proof of Theorem \ref{mainthm}}
\end{figure}

When $k$ is odd we proceed similarly, but we first define $B_{k+1}$ and the curves there and then use $\phi^{-1}$ to get curves outside $A_k$ and hence $A_{k+1}$.
\end{proof}

\appendix
\section{(Co)Homology Classes, Curves and Arcs}\label{sec:curves}
In this appendix we discuss and prove various results describing relations amongst simple curves, simple arcs, and (co)homology classes. 
In the case of finite-type surfaces, most of these results are
well-known;  we collect here extensions to the infinite-type setting, as well as some new 
characterizations.

\smallskip
Let us recall the notation we will use here. We say that a homology class $x\in \H1(S,\Z)$
is \emph{realized by a simple closed curve} if there is a simple closed curve $\gamma$ so that
$[\gamma] = x$. We also say that $x$ is a \emph{simple (non-)isotropic} if $x$ is realized
by a simple closed curve and x is (non-)isotropic.
 
In the case of finite-type surfaces, the characterization of simple (non-)isotropics was done by Meeks and Patrusky \cite{mp_representing}.

Their answer requires the following construction, which is also useful in the
study of infinite-type surfaces.
Given a surface $S$, denote by $\hat{S}$ the surface obtained by filling in the 
planar ends of $S$, and gluing disks to all boundary components. Note that $\hat{S}$ is
compact if $S$ has finite genus.

Let $i\co S\to\hat{S}$ be the natural inclusion and $i_*$ the corresponding map induced 
on first homology. Observe that $\ker(i_*)$ is isotropic with respect to the
algebraic intersection pairing $\ai$ on $\H1(S;\Z)$, and therefore we have
\[ \ai(i_*x, i_*y) = \ai(x,y) \quad\forall x,y\in\H1(S;\Z). \]

In our language, we can now state the characterisation of simple (non-)isotropics for finite-type surfaces as follows.
\begin{theor}[{\cite[Theorem~1]{mp_representing}}]\label{thm:mp}
	Let $S$ be a finite-type surface with $n+1$ punctures and let $\gamma_1,\dots \gamma_{n+1}$ be disjoint curves surrounding the punctures, oriented so that the puncture is to the right.
	 Let 
	$x\in \H1(S;\Z)$ be a non-zero class.  
	\begin{enumerate}[i)]
		\item $x$ is a simple non-isotropic if and only if $i_*x\in \H1(\hat{S};\Z)$ is a non-zero primitive class.
		\item $x$ is a simple isotropic if and only if $x=\pm \sum_{i=1}^{n}\varepsilon_{i}[\gamma_i]$, where $\varepsilon_i\in\{0,1\}$.
	\end{enumerate}
\end{theor}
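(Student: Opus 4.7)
My plan is to leverage two basic structural facts: the identity $\ai(i_*x, i_*y) = \ai(x,y)$, and the fact that $\ker(i_*) \subset \H1(S;\Z)$ is the isotropic subspace spanned by the puncture classes $[\gamma_1], \ldots, [\gamma_{n+1}]$ subject to the single relation $\sum_{i=1}^{n+1}[\gamma_i] = 0$. I will also freely invoke the classical representation theorem: every primitive class in a closed finite-type surface is realized by a simple closed curve.

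Part (ii) is largely bookkeeping. For the forward direction, a simple separating curve $\gamma$ bounds a subsurface $\Sigma$ on its right whose topological boundary in $S$ consists of $\gamma$ together with the puncture loops indexed by some proper subset $J \subset \{1,\ldots,n+1\}$; hence $[\gamma] = \sum_{i\in J}[\gamma_i]$ (with signs fixed by the orientation convention), and after possibly reversing the orientation of $\gamma$ to arrange $n+1 \notin J$ (using $\sum_{i=1}^{n+1}[\gamma_i] = 0$ if necessary), the class takes the required form. For the converse, any non-empty proper subset $J \subsetneq \{1,\ldots,n+1\}$ can be enclosed by a simple separating curve bounding a planar subsurface of $S$ containing exactly the punctures indexed by $J$, and this curve realizes the prescribed class.

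The forward direction of (i) is immediate: by Lemma \ref{lem:detectingnonsep1}, a simple non-isotropic $x$ admits $y \in \H1(S;\Z)$ with $\ai(x,y) = 1$, so $\ai(i_*x, i_*y) = 1$ and $i_*x$ is primitive in $\H1(\hat{S};\Z)$.

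The main obstacle is the reverse direction of (i). My strategy is to apply the classical representation theorem to find a simple closed curve $\hat{\gamma} \subset \hat{S}$ representing $i_*x$, then isotope $\hat{\gamma}$ off the filled-in puncture points to obtain a simple closed curve $\gamma_0 \subset S$ satisfying $i_*[\gamma_0] = i_*x$; hence $[\gamma_0] = x + \sum_i a_i[\gamma_i]$ for some integers $a_i$. It remains to adjust $\gamma_0$, while preserving simplicity, by a sequence of local moves each of which changes $[\gamma_0]$ by a single $\pm[\gamma_i]$. Concretely, given a simple arc $\eta$ in $S$ from a point of $\gamma_0$ to a puncture $p_i$ and otherwise disjoint from $\gamma_0$, one reroutes a small arc of $\gamma_0$ near the endpoint by pushing it out along $\eta$ and around $p_i$; the resulting simple closed curve differs from $\gamma_0$ in $\H1(S;\Z)$ by exactly $\pm[\gamma_i]$ while remaining isotopic to $\hat{\gamma}$ in $\hat{S}$. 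The delicate point is to iterate these moves consistently, which I would handle by choosing disjoint arcs $\eta_i$ in advance, one per required correction, exploiting the fact that $\gamma_0$ is non-separating in $S$ so that its complement is connected and accommodates arcs to every puncture.
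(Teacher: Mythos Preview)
The paper does not prove this theorem; it is quoted from Meeks--Patrusky and used as a black box throughout the appendix (in particular, the paper's proofs of Lemma~\ref{lem:detectingnonsep1}, Lemma~\ref{lem:alphaprim}, and Lemma~\ref{lem:detectingsep} all invoke it). So there is no in-paper proof to compare against, and your proposal should be read as an independent proof sketch of the cited result.

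On its own terms your argument is sound, with two small caveats. First, citing Lemma~\ref{lem:detectingnonsep1} for the forward direction of (i) is formally circular in this paper's logical structure, since the nontrivial direction of that lemma is \emph{proved} via Theorem~\ref{thm:mp}. You are only using the trivial direction---a non-separating simple closed curve meets some other curve geometrically once---so just say that directly. Second, in the reverse direction of (i), your plan to choose all the correcting arcs $\eta_i$ disjointly \emph{in advance} is awkward when some $|a_i|\geq 2$, since you then need several disjoint arcs to the same puncture and must argue they do not interfere after earlier reroutings. It is cleaner to perform the corrections sequentially: after each push the curve remains simple and non-separating in $S$ (its image in $\hat S$ is still $\hat\gamma$, so $i_*$ of its class is non-zero and hence it is non-isotropic), so its complement stays connected and a fresh arc to the next required puncture always exists.
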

Observe in particular that the homology class of a separating simple closed curve $\delta$ is completely determined
by the set of punctures lying to the left of $\delta$, which is a special case of Lemma \ref{lem:twosep}.

\smallskip 
In the subsequent parts of this section, we will develop analogous characterizations of simple (non-)isotropics
for infinite-type surfaces. The characterization of simple isotropics involves (algebraic) intersections with
simple arcs joining two ends, and so we also characterize these in the final subsection.

\subsection{Geometric homology bases}\label{sec:geomhombases}
In the case of surfaces of finite type, there are standard bases for homology that are considered; in particular, those given by one curve for all but one puncture and a geometric symplectic basis for the compactified surface. We want to describe standard bases for infinite-type surfaces as well.

A \emph{geometric homology basis} for a surface $S$ is a basis of homology $\{[\alpha_i],[\beta_i]\}_{ i\in I}\cup\{[\gamma_j]\}_{j\in J}$, where the $\alpha_i,\beta_i,\gamma_j$ are all simple closed curves and such that
\begin{itemize}
	\item $\{[\gamma_j]\}_{j\in J}$ is a basis for the isotropic subspace of $\H1(S;\mathbb{Z})$,
	\item \( i(\alpha_j, \alpha_k) = i(\beta_j, \beta_k) = 0 \) for all \(j,k \in I \),
	\item $i(\alpha_j,\beta_k)=\delta_{jk}$ for all $j,k\in I$, and
	\item for any compact subset $K$ of $S$, only finitely many curves in the basis intersect $K$.
\end{itemize}

\begin{lemma}
	For any surface $S$ there exists a geometric homology basis.
\end{lemma}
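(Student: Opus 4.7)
The plan is to build the basis inductively along an exhaustion of $S$ by star surfaces. Fix such an exhaustion $\Sigma_1 \subset \Sigma_2 \subset \cdots$ with $\Sigma_k \subset \Int(\Sigma_{k+1})$ and chosen so that no component of $\partial \Sigma_{k+1}$ is isotopic to any component of $\partial \Sigma_k$; in particular, every complementary component of $\Sigma_k$ meets $R_k := \overline{\Sigma_{k+1} \ssm \Sigma_k}$. Each connected component $R_k^j$ of $R_k$ is a compact finite-type subsurface with exactly one inner boundary inherited from $\partial \Sigma_k$, finitely many outer boundaries inherited from $\partial \Sigma_{k+1}$, and possibly finitely many punctures corresponding to isolated planar ends of $S$ lying in that region.

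I would maintain by induction a finite collection of simple closed curves $\mathcal{B}_k$ in $\Sigma_k$, partitioned as $\mathcal{A}_k \sqcup \mathcal{G}_k$, such that: (i) the classes of $\mathcal{B}_k$ form a basis of $\H1(\Sigma_k;\Z)$; (ii) $\mathcal{A}_k$ consists of symplectic pairs $(\alpha_i, \beta_i)$ realizing the standard intersection pattern; (iii) $\mathcal{G}_k$ consists of pairwise disjoint separating curves, each disjoint from $\mathcal{A}_k$; and (iv) $\mathcal{B}_{k-1} \subset \mathcal{B}_k$, with the newly added curves contained in $\Int(R_{k-1})$. For the base case, apply the standard finite-type construction to $\Sigma_1$ (via a pants decomposition together with dual curves to the non-separating pants curves, which is the geometric incarnation of Theorem~\ref{thm:mp}). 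For the inductive step, work in each component $R_k^j$ separately: add one symplectic pair for each unit of genus in $R_k^j$, together with finitely many pairwise disjoint separating curves parallel to the new outer boundaries of $\Sigma_{k+1}$ and to the punctures of $R_k^j$, chosen so as to extend the existing collection to a basis of $\H1(\Sigma_{k+1};\Z)$. Since all new curves live in $R_k^j$, they are automatically disjoint from $\mathcal{B}_k$ and from each other across distinct components.

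Setting $\mathcal{B} := \bigcup_k \mathcal{B}_k$, local finiteness is immediate from (iv): any compact $K \subset S$ lies in some $\Sigma_m$, and only the finitely many curves in $\mathcal{B}_m$ can meet $K$. That the classes of $\mathcal{B}$ form a basis of $\H1(S;\Z)$ follows from the identification $\H1(S;\Z) \cong \varinjlim \H1(\Sigma_k;\Z)$ under the inclusion-induced transition maps, together with the fact that each $\mathcal{B}_k$ is a basis in a way compatible with the transition maps. The symplectic intersection pattern of the $(\alpha_i,\beta_i)$ pairs and the isotropy of the curves in $\mathcal{G} := \bigcup_k \mathcal{G}_k$ are preserved in the limit. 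The main obstacle is the bookkeeping in the inductive step: one must precisely identify how many new isotropic generators to add in each $R_k^j$, taking into account the boundary relation in $\H1(R_k^j;\Z)$ that expresses the sum of all oriented boundary and puncture classes as zero, together with whether the inner boundary of $R_k^j$ was already null in $\H1(\Sigma_k;\Z)$ or not. A rank count via the Mayer--Vietoris sequence for the decomposition $\Sigma_{k+1} = \Sigma_k \cup R_k$ confirms that the accounting works out to give exactly the right number of new basis vectors at each stage.
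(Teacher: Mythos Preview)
Your approach is essentially the same as the paper's: exhaust by compact subsurfaces with separating boundary, build the symplectic pairs $\{\alpha_i,\beta_i\}$ inductively in the successive collars, and take the isotropic generators from the boundary curves of the exhaustion. The one place the paper is slicker is in the isotropic part: rather than doing the Mayer--Vietoris bookkeeping you describe to decide exactly which separating curves to add at each stage, it simply takes the set of \emph{all} boundary components of all $\Sigma_n$ and passes to a maximal linearly independent subset, which sidesteps the accounting entirely.
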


\begin{proof}
	Consider an exhaustion of $S$ by compact subsurfaces $\Sigma_1\subset \Sigma_2\subset\dots$ whose boundary curves are all separating and are allowed to be peripheral. Construct by induction a geometric symplectic basis for $\Sigma_n$ with boundary capped off, by choosing such a basis for $\Sigma_1$ and extending the basis for $\Sigma_n$ to a basis for $\Sigma_{n+1}$. This gives the $\{\alpha_i,\beta_i\}_{i\in I}$ with the required intersection numbers. To get the basis for the isotropic part, consider the set $\{\delta_l\}_{l\in L}$ of all boundary components of all $\Sigma_n$ and let $\{\gamma_j\}_{j\in J}$ be a maximal independent subset. The space generated by $\{\gamma_j\}_{j\in J}$ is the same as the space generated by $\{\delta_l\}_{l\in L}$ and this is the isotropic subspace of $\H1(S;\mathbb{Z})$. Furthermore, by construction, all $\gamma_j$ are pairwise disjoint and disjoint from the other curves. As any compact set is contained in some $\Sigma_n$, for $n$ big enough, and each subsurface contains only finitely many curves representing basis elements, we get the third property of a geometric basis as well.
\end{proof}

In the case of the Loch Ness monster surface, there is no isotropic subspace of $\H1(S;\mathbb{Z})$ of homology and a geometric homology basis is described in Figure \ref{fig:basisLochNess}.

\begin{figure}[H]
	\begin{overpic}[width=.5\textwidth]{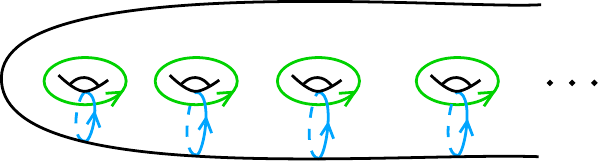}
		\put(14,19){$\alpha_1$}
		\put(33,19){$\alpha_2$}
		\put(52,19){$\alpha_3$}
		\put(75,19){$\alpha_4$}
		\put(12,-3){$\beta_1$}
		\put(31,-5){$\beta_2$}
		\put(52,-5){$\beta_3$}
		\put(76,-5){$\beta_4$}
	\end{overpic}
	\vspace*{.3cm}
	\caption{A geometric basis for the Loch Ness monster surface}
	\label{fig:basisLochNess}
\end{figure}
Note that the same homology basis could be realized by two different sets of curves, one of which gives a geometric homology basis and the other of which does not. An example is given in Figure \ref{fig:twobasesLochNess}.
The curves represent the same basis as the curves in Figure \ref{fig:basisLochNess}, but they are not a geometric homology basis: there is a compact subsurface intersecting all of the $\alpha_i'$.

\begin{figure}[H]
	\begin{overpic}[width=.55\textwidth]{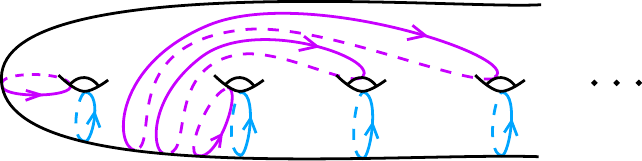}
		\put(5,15){$\alpha_1'$}
		\put(27,-5){$\alpha_2'$}
		\put(46,11){$\alpha_3'$}
		\put(72,19){$\alpha_4'$}
		\put(12,-3){$\beta_1$}
		\put(36,-5){$\beta_2$}
		\put(55,-5){$\beta_3$}
		\put(77,-5){$\beta_4$}
	\end{overpic}
		\vspace*{.3cm}
	\caption{Curves representing the same basis as the curves in Figure \ref{fig:basisLochNess}}\label{fig:twobasesLochNess}
\end{figure}

Moreover, two geometric symplectic bases do not need to be in the same mapping class group orbit, as the example in Figure \ref{fig:differentstdbases} shows.  
\begin{figure}[H]
	\includegraphics[width=.55\textwidth]{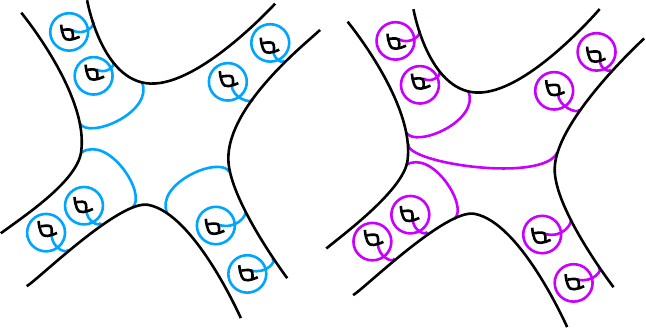}
	\caption{Two geometric symplectic bases that are not in the same mapping class group orbit.}\label{fig:differentstdbases}
\end{figure}

We will also consider the first cohomology group with integer coefficients $\C1(S;\Z)$ of a surface $S$.
We will often use the identification
$$\C1(S;\Z)\simeq \mbox{Hom}(\H1(S;\Z);\Z)$$
without explicit mention.

\subsection{Non-separating curves}\label{subsec:nonsep}
We begin with a simple criterion to detect classes realizable by non-separating simple closed curves,
which is likely well known.
\begin{lemma}[Lemma \ref{lem:detectingnonsep1}]
	Let $S$ be any surface and $x \in \H1(S;\Z)$.
	Then $x$ is a simple non-isotropic if and only if there exists $y \in \H1(S; \Z)$ such that $\ai(x,y) = 1$.
\end{lemma}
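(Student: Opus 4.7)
The forward direction is a direct geometric construction. If $x = [\alpha]$ with $\alpha$ a simple non-separating closed curve, then $S \ssm \alpha$ is connected, so the two local sides of $\alpha$ at any chosen basepoint can be joined by an arc in the complement; gluing this arc together with a short transverse arc across $\alpha$ yields a simple closed curve $\beta$ meeting $\alpha$ transversely in one point, and $y = \pm[\beta]$ gives $\ai(x,y) = 1$.

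The substance lies in the converse. Given $\ai(x,y) = 1$, two observations come for free: $x$ is primitive (any integer divisor of $x$ must divide $\ai(x,y)=1$) and $x$ is non-isotropic by the very existence of $y$. The plan for realizing $x$ by a simple closed curve is to reduce to the finite-type case handled by Meeks--Patrusky. I would represent $x$ and $y$ by loops $\ell_x, \ell_y$ meeting transversely, and let $K \subset S$ be a connected compact subsurface with separating boundary containing $\ell_x \cup \ell_y$ (and hence all their intersection points). Then $x,y \in \H1(K;\Z)$ and $\ai(x,y) = 1$ as computed inside $K$. Capping off the boundary components of $K$ to obtain a closed surface $\widehat K$, the pushforward $i_* x \in \H1(\widehat K; \Z)$ remains primitive since $\ai(i_* x, i_* y) = 1$. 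Theorem~\ref{thm:mp}(i), applied to $K$ with its boundary curves playing the role of punctures, then produces a simple closed curve $\gamma \subset K \subset S$ with $[\gamma] = x$ in $\H1(K;\Z)$ and hence in $\H1(S;\Z)$. Non-separability of $\gamma$ in $S$ is automatic: any separating simple closed curve bounds a subsurface and therefore has isotropic homology class, contradicting $\ai(x,y)=1$.

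The only real obstacle is the reduction to the compact setting, and the care required is to ensure the algebraic intersection pairing computed inside $K$ matches the one on $S$. This is guaranteed by choosing $K$ large enough to contain all transverse intersection points of $\ell_x$ and $\ell_y$. Once inside $K$, Meeks--Patrusky closes the argument immediately.
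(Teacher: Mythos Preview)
Your proof is correct and follows essentially the same approach as the paper: both handle the forward direction by constructing a dual curve, and for the converse both reduce to a finite-type subsurface, observe that $i_*x$ is primitive in the capped-off surface via $\ai(i_*x,i_*y)=1$, and invoke Meeks--Patrusky (Theorem~\ref{thm:mp}). Your version is slightly more explicit about the care needed in choosing $K$ (separating boundary, containing the intersection points), but the logical structure is identical.
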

\begin{proof}
	First observe that for any non-separating simple closed curve $\alpha$ there exists a curve $\beta$ that intersects it geometrically once, and hence $\ai([\alpha],\pm[\beta])=1$. This shows one direction of the
	lemma.
	
	\smallskip For the other direction, we begin with the case where $S$ is finite type. Since $\ai(i_*x, i_*y) = \ai(x,y) = 1$, the homology
	class $i_*x \in \H1(\hat{S};\Z)$ is primitive. By Theorem~\ref{thm:mp}, this implies that $x$ is realized
	by a non-separating simple closed curve.
	
	In the case of a general $S$, there is a finite-type subsurface $F\subset S$ so that $x,y$ can be 
	realized by loops on $F$. Applying the previous case to $F$ then shows that $x$ can be realized
	by a simple closed curve on $F$, hence $S$.
\end{proof}
\begin{lemma}\label{lem:alphaprim}
	Suppose that $\{[\alpha_i], [\beta_i]\}_{i\in I}\cup\{[\gamma_j]\}_{j\in J}$ is a geometric basis for homology, where $\{[\gamma_j]\}_{j\in J}$ is a basis for the isotropic subspace of $\H1(S;\mathbb{Z})$.
	Then any primitive element in the span of $\{[\alpha_i], [\beta_i]\}_{i\in I}$ can
	be realized by a non-separating simple closed curve.
\end{lemma}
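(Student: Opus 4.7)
The plan is to reduce to Lemma~\ref{lem:detectingnonsep1} by exhibiting an explicit $y \in \H1(S;\Z)$ with $\ai(x,y) = 1$.

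First I would observe that since homology classes are represented by finite combinations, a primitive class $x$ in the span of $\{[\alpha_i],[\beta_i]\}_{i\in I}$ can be written as
\[ x = \sum_{i=1}^n \bigl(a_i [\alpha_i] + b_i [\beta_i]\bigr) \]
for some finite index set (after reindexing), with integer coefficients $a_i, b_i$. Primitivity of $x$ in $\H1(S;\Z)$ forces $\gcd(a_1, b_1, \ldots, a_n, b_n) = 1$: indeed, if a common divisor $d>1$ existed, then $x = d\cdot x'$ with $x'$ in the span, contradicting primitivity.

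Next, by Bezout's identity applied to the tuple $(a_1, b_1, \ldots, a_n, b_n)$, there exist integers $d_1, c_1, \ldots, d_n, c_n$ such that
\[ \sum_{i=1}^n \bigl(a_i d_i - b_i c_i\bigr) = 1. \]
Set $y = \sum_{i=1}^n \bigl(c_i [\alpha_i] + d_i [\beta_i]\bigr) \in \H1(S;\Z)$. Using the standard symplectic intersection pattern of the $\alpha_i, \beta_i$ (so that $\ai([\alpha_i], [\alpha_j]) = \ai([\beta_i], [\beta_j]) = 0$ and $\ai([\alpha_i], [\beta_j]) = \delta_{ij}$), a direct bilinear expansion yields
\[ \ai(x, y) = \sum_{i=1}^n \bigl(a_i d_i - b_i c_i\bigr) = 1. \]

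Finally, Lemma~\ref{lem:detectingnonsep1} immediately gives that $x$ is a simple non-isotropic, that is, $x$ is represented by a non-separating simple closed curve. There is no substantive obstacle in this argument; the only mildly delicate point is to confirm that ``primitive in the span'' is equivalent to the gcd condition on coordinates, which is standard for a free $\Z$-module on the chosen basis.
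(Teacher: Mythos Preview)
Your proof is correct and takes a somewhat different route from the paper's. The paper first reduces to the finite-type case by passing to a compact subsurface containing the finitely many $\alpha_i,\beta_i$ appearing in $x$, and then invokes Theorem~\ref{thm:mp} directly: the map $i_*\colon \H1(S;\Z)\to\H1(\hat S;\Z)$ restricts to an isomorphism on the span of the $[\alpha_i],[\beta_i]$ (its kernel being the isotropic subspace), so a primitive $x$ in that span has primitive image $i_*x$, and Meeks--Patrusky yields the simple closed curve. Your argument instead exploits the symplectic structure explicitly: B\'ezout produces a class $y$ with $\ai(x,y)=1$, and Lemma~\ref{lem:detectingnonsep1} does the rest. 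Both routes ultimately rest on Theorem~\ref{thm:mp} (since Lemma~\ref{lem:detectingnonsep1} is itself proved via it), but yours avoids discussing $\hat S$ and hides the finite-type reduction inside the quoted lemma. One small remark: the geometric basis is defined via \emph{geometric} intersection numbers, so a priori $\ai([\alpha_i],[\beta_i])=\pm 1$ rather than necessarily $+1$; this is harmless, since the B\'ezout coefficients absorb any signs.
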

\begin{proof}
	Arguing as in the proof of Lemma~\ref{lem:detectingnonsep1}, by passing to a suitable subsurface it suffices to show this for surfaces of finite type.
	Now, the lemma follows from Theorem~\ref{thm:mp}, since $i_*$ induces an isomorphism between the span of the
	$\{[\alpha_i], [\beta_i]\}_{i\in I}$ and $\H1(\hat{S};\Z)$.
\end{proof}

Instead of requiring the existence of a class that intersects correctly, we can also characterize
classes realized by non-separating simple closed curves by an extension property.
\begin{lemma}\label{lem:detectingnonsep2}
	Let $S$ be any surface and $x \in \H1(S;\Z)$.
	Then $x$ is a simple non-isotropic if and only if $x$ is not isotropic
	and there is a basis $B$ of $\H1(S;\Z)$ so that 
	\begin{enumerate}[i)]
		\item $x\in B$, and
		\item $B$ contains a basis for the isotropic subspace of $\H1(S;\Z)$.
	\end{enumerate}
\end{lemma}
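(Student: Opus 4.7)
My plan is to split the equivalence into its two directions, with the existence of a geometric homology basis from Section \ref{sec:geomhombases} serving as the key technical input in both.

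For the forward direction, assume $x = [\alpha]$ for a non-separating simple closed curve $\alpha$. I would upgrade the construction in Section \ref{sec:geomhombases} to one that starts with $\alpha$ as a distinguished element. Concretely, fix an exhaustion $\Sigma_1 \subset \Sigma_2 \subset \cdots$ by compact subsurfaces with separating boundaries and $\alpha \subset \Sigma_1$, and inductively build a symplectic basis of each $\H1(\hat\Sigma_k;\Z)$ extending the previous one, taking as first basis element $[\alpha]$ (which is primitive because $\alpha$ is non-separating) together with a curve $\beta_1$ satisfying $i(\alpha,\beta_1)=1$. The resulting geometric basis $B = \{[\alpha_i],[\beta_i]\} \cup \{[\gamma_j]\}$ contains $x = [\alpha_1]$ by design, and $\{[\gamma_j]\}$ is a basis of the isotropic subspace by definition of a geometric basis.

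For the reverse direction, by Lemma \ref{lem:detectingnonsep1} it suffices to produce $y \in \H1(S;\Z)$ with $\ai(x,y) = 1$. Fix any geometric homology basis $\{[\alpha_i],[\beta_i]\} \cup \{[\gamma_j]\}$ and let $I$ denote the isotropic subspace and $W = \Span\{[\alpha_i],[\beta_i]\}$. Pairing against the $[\alpha_i]$ and $[\beta_i]$ shows $W \cap I = 0$, so $\H1(S;\Z) = W \oplus I$, and $\ai$ restricts to the standard symplectic form on $W$. A short linear-algebra argument shows that the projection $\pi_W \co \H1(S;\Z) \to W$ sends $B \setminus B_{iso}$ to a basis of $W$: spanning follows because $\pi_W$ kills $B_{iso}$ and $B$ generates $\H1(S;\Z)$; independence follows because any relation among the $\pi_W(b)$ would produce a non-trivial combination of $B \setminus B_{iso}$ lying in $I = \Span(B_{iso})$, contradicting that $B$ is a basis of $\H1(S;\Z)$. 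Therefore $\pi_W(x)$ is primitive in $W$, so writing $\pi_W(x) = \sum a_i[\alpha_i] + \sum b_i[\beta_i]$ one has $\gcd(\{a_i\}\cup\{b_i\}) = 1$, and B\'ezout yields $y \in W$ with $\ai(\pi_W(x),y)=1$. Since $x - \pi_W(x) \in I$, we conclude $\ai(x,y) = \ai(\pi_W(x),y) = 1$, and Lemma \ref{lem:detectingnonsep1} finishes.

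The main obstacle is the forward direction: one has to verify that a non-separating simple closed curve in $\hat\Sigma_1$ can be chosen as the first element of a symplectic basis of $\H1(\hat\Sigma_1;\Z)$ and that this choice can be threaded through the inductive extension to the subsequent $\Sigma_k$ without breaking compatibility. This reduces to the standard finite-type statement (a version of Theorem \ref{thm:mp}) that any primitive class in a closed orientable surface extends to a symplectic basis, so the step is essentially bookkeeping on top of the geometric-basis construction. Once Section \ref{sec:geomhombases} delivers the decomposition $\H1(S;\Z) = W \oplus I$ with symplectic $W$, the reverse direction is pure linear algebra over $\Z$.
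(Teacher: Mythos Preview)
Your proof is correct and follows the same strategy as the paper: both directions use a geometric homology basis, and for the reverse direction one shows that the projection $\pi_W(x)$ to the symplectic part is primitive and then invokes Lemma~\ref{lem:detectingnonsep1}. Your primitivity argument (showing $\pi_W$ carries $B\setminus B_{\mathrm{iso}}$ to a basis of $W$) is a clean variant of the paper's coefficient-comparison trick (writing $X=nX_0$ and reading off the coefficient of $x$ in $X_0$ to force $n=\pm1$), but the overall approach is identical.
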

\begin{proof}
	One direction follows since any non-separating simple closed curve can be extended to a standard basis for homology.
	
	\smallskip For the reverse direction, suppose $x$ is not isotropic and can be extended to a basis $\{x, x_k, y_l\}$ where $\{y_l\}$ is a basis for the isotropic subspace.
	
	Choose some geometric basis for homology $\{[\alpha_i], [\beta_i]\}_{i\in I}\cup\{[\gamma_j]\}_{j\in J}$, where $\{[\gamma_j]\}_{j\in J}$ is a basis for the isotropic subspace of $\H1(S;\mathbb{Z})$.
	Write $x$ in this geometric basis as the sum
	\[ x =  X + Y \]
	where $X$ is a linear combination of the $\{[\alpha_i],[\beta_i]\}$ and $Y$ is a linear combination of the $\{[\gamma_j]\}$.
	
	Since $x$ is not isotropic we have $X\neq 0$. We claim that $X$ is in fact primitive. 
	Assuming this claim, by combining Lemma~\ref{lem:alphaprim} and Lemma~\ref{lem:detectingnonsep1}, we can find a homology class $w$ so that $1 = \ai(w,X)$. We then have $1 = \ai(w,X) = \ai(w,x)$, and we are done by Lemma~\ref{lem:detectingnonsep1}.
	
	\smallskip To prove the claim, suppose $X = nX_0$ for some $n\in \Z$.	Since $Y$ is isotropic we have
	\[ x-nX_0 = Y = \sum n_j y_j. \]
	Let $m\in \Z$ be the coefficient of $x$ when writing $X_0$ in the basis $\{x, x_k, y_l\}$.
	Then by looking at the coefficient of $x$ in the previous equation, we obtain
	$$1-nm=0$$
	which implies that $nm=1$, i.e.\ $n=\pm 1$.
\end{proof}
\begin{rmk}

In Lemma~\ref{lem:detectingnonsep2} is is actually necessary to require
that the basis $B$ contains a basis for the isotropic subspace.
Namely, let $S$ be a twice-punctured torus with standard geometric basis given by two non-separating curves $\alpha$ and $\beta$ intersecting once and one of the two boundary curves denoted by $\gamma$. Consider the class $x=[\gamma]+2[\alpha]$. It cannot be realized by a non-separating curve by Theorem \ref{thm:mp}, but it is non-isotropic and $[\alpha],[\beta],x$ is a basis of homology.
\end{rmk}

\subsection{Separating Curves}\label{subsec:sep}
To characterize simple isotropics, we will use intersections with arcs. Recall that we assume arcs to be properly embedded, but allow them to be non-compact.
\begin{lemma}\label{lem:detectingsep}
	Let $S$ be any surface and $x \in \H1(S;\Z)$. Then $x$ is a simple isotropic if and only if $x$ is isotropic and $|\ai(x, a)| \leq 1$ for every simple arc.
\end{lemma}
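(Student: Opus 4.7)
For the forward direction, suppose $x=[\gamma]$ for a simple closed curve $\gamma$. Since $x$ is isotropic, $\gamma$ is separating. Making any simple arc $a$ transverse to $\gamma$, the signed intersection $\ai([\gamma],a)$ records the net change in ``side'' of $\gamma$ along $a$, which is $0$ if the two ends of $a$ (whether at $\partial S$ or in $\Ends(S)$) lie on the same side of $\gamma$ and $\pm 1$ otherwise.

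For the reverse direction, assume $x$ is isotropic and $|\ai(x,a)|\leq 1$ for every simple arc. If $|\Ends(S)|\leq 1$, every separating simple closed curve bounds a compact region (hence is null-homologous), so the isotropic subspace of $\H1(S;\Z)$ is trivial and $x=0$ is trivially simple. Assume $|\Ends(S)|\geq 2$ and fix a base end $e_0$. The plan is to define a function $c\co\Ends(S)\to\Z$ by $c(e):=\ai(x,a_e)$, where $a_e$ is any simple arc from $e_0$ to $e$. We would first verify that $c$ is well-defined (two such arcs can be isotoped to agree in a clopen neighborhood of $e$, so that their difference is a compactly supported loop against which the isotropic class $x$ pairs trivially) and continuous (a perturbation of $a_e$ inside a small enough clopen neighborhood of $e$, disjoint from a compact cycle realizing $x$, does not alter $\ai(x,a_e)$). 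The arc hypothesis applied to arcs between arbitrary pairs of ends then gives $|c(e)-c(e')|\leq 1$, forcing $c$ to take at most two consecutive integer values; replacing $x$ by $-x$ if necessary, we assume $c$ takes values in $\{0,1\}$ and set $E_L:=c^{-1}(1)$, a clopen subset of $\Ends(S)$.

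Standard surface topology then produces a separating simple closed curve $\gamma$ with $\lends(\gamma)=E_L$: write $E_L$ as the ends of a finite union $U$ of basic clopens, merge the (finitely many) boundary components of $U$ into a single curve by adding tubes, and take the resulting boundary. It remains to show $[\gamma]=x$. Set $y:=x-[\gamma]$, an isotropic class whose associated end function vanishes identically. Realize $y$ by a loop inside a star subsurface $F$ (Definition \ref{defin:subs}) with separating boundary components $\partial_1 F,\ldots,\partial_k F$ bounding unbounded complementary components $W_1,\ldots,W_k$ of $S\ssm F$; the injection $\H1(F;\Z)\hookrightarrow\H1(S;\Z)$ combined with the isotropy of $y$ yields $y=\sum n_i[\partial_i F]$. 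Taking $e_0\in W_0$ and an arc $a_e$ from $e_0$ to any $e\in\Ends(W_j)$ crossing $\partial F$ only twice yields $0=c_y(e)=\pm(n_0-n_j)$, so the $n_i$ are all equal; the relation $\sum[\partial_i F]=0$ in $\H1(S;\Z)$ (boundary of the 2-chain $F$) gives $y=0$, so $x=[\gamma]$ is a simple isotropic.

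The delicate points are handling the interaction between arcs to ends and homology pairings — specifically, the well-definedness and continuity of $c$, which require precise control over modifications of arcs near ends without changing intersection with $x$ — and ensuring that the star subsurface $F$ in the final step is chosen large enough that both $\H1(F;\Z)\hookrightarrow\H1(S;\Z)$ is injective and each complementary component carries ends accessible to the $c_y$-computation.
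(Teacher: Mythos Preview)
Your argument is correct but follows a genuinely different route from the paper's. The paper reduces immediately to the finite-type case: choose a finite-type subsurface $F$ with separating boundary containing a loop representing $x$, observe that any simple arc in $F$ extends to a simple arc in $S$ (so the arc hypothesis transfers to $F$), and then invoke the Meeks--Patrusky classification (Theorem~\ref{thm:mp}) after a short coefficient computation. Your approach instead works globally on $S$: you package the arc pairings into a locally constant function $c\co\Ends(S)\to\Z$, read off the clopen set $E_L=c^{-1}(1)$, build $\gamma$ with $\lends(\gamma)=E_L$, and then kill $y=x-[\gamma]$ by a direct computation in a star surface. Your route is more self-contained (it never appeals to Theorem~\ref{thm:mp}) and makes the link between simple isotropics and clopen partitions of $\Ends(S)$ completely explicit; the paper's route is shorter and offloads the combinatorics to the cited finite-type result.

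One technical remark: your justification of well-definedness of $c$ via ``isotoping two arcs to agree in a clopen neighborhood of $e$'' is the shakiest step as written, since you need agreement near \emph{both} ends and neighborhoods of ends need not be simply connected. A cleaner way to get the same conclusion (and continuity simultaneously) is to realize $x$ by a cycle in a compact star surface $K$, write $x=\sum c_i[\partial_i K]$, and note that $\ai(x,a)=\sum c_i\,\ai([\partial_i K],a)$ depends only on which complementary components of $K$ contain the two ends of $a$; this is essentially the computation you already do in your final paragraph. Also handle the trivial edge case $E_L=\emptyset$ (i.e.\ $c\equiv 0$) separately, since then no essential separating $\gamma$ exists and you go straight to $x=0$.
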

\begin{proof}
	One direction is easy: if $\gamma$ is a simple separating curve, then $\ai(\gamma, a)$ is $\pm 1$ if $\gamma$
	separates the ends that $a$ joins; otherwise the intersection is zero as any two successive intersections must have different signs.
	
	\smallskip We begin by showing the other direction in the case of a finite-type surface $S$ with $n+1$ punctures. 
	Let $\gamma_1, \ldots,  \gamma_{n+1}$ be loops, each surrounding a puncture and oriented so that the puncture is to the left.
	Then any collection of $n$ elements of the set $\{ \gamma_1, \ldots, \gamma_{n+1} \}$ is a basis for the isotropic subspace of $\H1(S, \Z)$. 
	Let $x$ be an isotropic homology class and suppose that $|\ai(x, a)| \leq 1$ for every arc joining punctures.
	As \( x \) is isotropic, we have
	\[ x = \sum_{i=1}^n c_i [\gamma_i]. \]
	Consider an arc $\alpha$ from $\gamma_j$ to $\gamma_{n+1}$; then
	$$1\geq|\ai(x,\alpha)|=\left|\sum_{i=1}^n c_i \ai([\gamma_i],\alpha)\right|=|c_j|$$
	so each coefficient has absolute value at most one.
	If there were two indices such that $c_j=1$ and $c_k=-1$, then any arc connecting $\gamma_j$ and $\gamma_k$ would have algebraic intersection number $\pm 2$ with $x$.
	Hence all non-zero coefficients have the same sign and using Theorem~\ref{thm:mp} we deduce that $x$ can be realized by a simple closed curve.
	
	\smallskip Finally, suppose that $S$ is of infinite type, and that $x$ is as in the lemma. Choose
	a finite type surface $F \subset S$ which contains a loop homologous to $x$ and so that every boundary
	curve of $F$ is separating in $S$. By the latter property, any simple arc $a_0 \subset F$ can be extended to a
	simple arc $a$ in $S$ so that $a \cap F = a_0$. Hence, we can apply the finite-type case to $x$ and $F$, and
	conclude that $x$ is a simple isotropic in $F$, and hence in $S$.
\end{proof}

\subsection{Arcs}\label{subsec:arcs}
Given an arc $\alpha$ joining two ends, we have an associated integral cohomology class $\ai(\alpha,\cdot)$. The goal of this section is to characterize which cohomology classes arise this way.

Given $f\in\C1(S,\Z)$, we say that:
\begin{itemize}
\item $f$ \emph{has support in $e\in\Ends(S)$} if for all $V\in \filt_e$ there is $x\in V$ such that $f(x)\neq 0$;
\item the \emph{support} $\supp(f)$ of $f$ is the set of ends in which $f$ has support;
\item $f$ is \emph{arclike} in $e\in\Ends(S)$ if for all $V\in \filt_e$ there is $x\in V$ such that $x$ isotropic and $f(x)=1$.
\end{itemize} 

The goal of the next set of lemmas is to prove the following:
\begin{prop}\label{prop:detectingarcs}
Let $f\in\C1(S;\Z)$ be such that
\begin{enumerate}
\item the support of $f$ is $\{e_1,e_2\}$, for $e_1\neq e_2\in\Ends(S)$, and
\item $f$ is arclike in $e_1$ and $e_2$.
\end{enumerate}
Then $f=\ai(\alpha,\cdot)$ for a simple arc $\alpha$ connecting $e_1$ and $e_2$.
\end{prop}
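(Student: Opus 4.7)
The plan is to build $\alpha$ as a direct limit of compatible simple sub-arcs on a compact exhaustion of $S$, in the spirit of the Richards-style arguments in the proofs of Theorems~\ref{monster} and \ref{mainthm}. First, I would fix an exhaustion $\{F_n\}$ of $S$ by compact star surfaces with $F_n\subset F_{n+1}$, arranged so that exactly two complementary components of $F_n$, call them $X_n^1$ and $X_n^2$, contain $e_1$ and $e_2$ respectively (forming nested descending sequences of flare surfaces), while every other complementary component is a flare disjoint from $\{e_1,e_2\}$. Write $\delta_n^i:=\partial X_n^i$ for $i\in\{1,2\}$ and denote the remaining boundary components of $F_n$ by $\delta_n^j$, $j\geq 3$.

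The next step is to pin down the values of $f$ on the boundary classes of $F_n$. For $j\geq 3$, the bounding flare $Y:=X_n^j$ is disjoint from $\{e_1,e_2\}$, and compactness of $\Ends(Y)$ together with the support hypothesis lets me extract finitely many disjoint sub-flares $W_1,\ldots,W_N\subset Y$ on which $f$ vanishes and whose boundaries together with $\partial Y$ bound a compact subsurface $K\subset Y$; the null-homology of $\partial K$ then forces $f([\delta_n^j])=f([\partial Y])=0$. For $i\in\{1,2\}$, I apply the arclike condition to $X_n^i$ to obtain an ambient-isotropic $y\in\H1(X_n^i;\Z)$ with $f(y)=1$, and choose a compact finite-type subsurface $F'\subset X_n^i$ containing a cycle for $y$ so that, among the boundary components of $F'$, only $\delta_n^i$ and a single inner circle $\partial'$ bound flares containing $e_i$ (the remaining inner boundaries bounding flares disjoint from $\{e_1,e_2\}$, on which $f$ vanishes by the previous computation). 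Writing $y$ in the basis of isotropic classes of $\H1(F';\Z)$ and using $f([\partial'])=-f([\delta_n^i])$ from the sum-of-boundaries relation in $F'$, one reads off $f([\delta_n^i])=\pm 1$. A sign analysis using $\sum_j[\delta_n^j]=0$ gives $f([\delta_n^1])=-f([\delta_n^2])$; after changing the overall sign of $f$ if necessary we may take $f([\delta_n^1])=1$, $f([\delta_n^2])=-1$.

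With these boundary evaluations in place, I invoke Poincar\'e–Lefschetz duality
\[ \C1(F_n;\Z) \cong H_1(F_n,\partial F_n;\Z), \]
under which $f|_{F_n}$ corresponds to a relative $1$-cycle of boundary $[\delta_n^1]-[\delta_n^2]$ in $H_0(\partial F_n;\Z)$. A standard surgery argument on oriented surfaces realizes any such class by a single simple arc $\alpha_n\subset F_n$ from $\delta_n^2$ to $\delta_n^1$, and duality yields $\ai(\alpha_n,\cdot)=f$ on $\H1(F_n;\Z)$.

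The remaining task — which I expect to be the main obstacle — is to arrange the $\alpha_n$ so that they are nested and limit to a simple proper arc. I would construct them inductively: given $\alpha_n$, each ``shell'' $X_n^i\smallsetminus X_{n+1}^i$ is a compact surface whose distinguished boundary components are $\delta_n^i$ and $\delta_{n+1}^i$, with all other boundaries lying on flares disjoint from $\{e_1,e_2\}$. The same boundary-value analysis and Poincar\'e–Lefschetz argument, applied inside the shell, yield a simple arc $\eta_n^i$ from $\delta_{n+1}^i$ to $\delta_n^i$ realizing $f$ restricted to the shell; by choosing the endpoint on $\delta_n^i$ to coincide with the endpoint of $\alpha_n$, the concatenation $\eta_n^2\cup\alpha_n\cup\eta_n^1$ is a simple arc $\alpha_{n+1}\subset F_{n+1}$ refining $\alpha_n$. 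The direct limit $\alpha:=\bigcup_n\alpha_n$ is a simple proper arc connecting $e_1$ and $e_2$, and $\ai(\alpha,\cdot)=f$ holds on $\H1(S;\Z)=\varinjlim\H1(F_n;\Z)$. The two subtleties to track carefully are (i) choosing the shells so that the boundary-value analysis works at each stage with globally consistent signs, and (ii) arranging the matching of endpoints on $\delta_n^i$ so that the arcs glue without creating self-intersections or contributing extra cohomology.
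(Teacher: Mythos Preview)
Your proposal is correct and follows essentially the same architecture as the paper: exhaust by star surfaces, show that on each piece $f$ takes values $\pm 1$ on exactly the two boundary circles facing $e_1,e_2$ and $0$ on the rest, realize $f$ on each compact piece by a simple arc, and extend inductively through the shells with matched endpoints. The only difference is in the finite-type realization step: you invoke Poincar\'e--Lefschetz duality and a surgery argument, whereas the paper (Lemma~\ref{lem:finite-arcs}) glues the two distinguished boundary circles together and caps the others to pass to a closed surface, realizes $f$ there by a simple closed curve via Lemma~\ref{lem:realise-scc}, and then cuts back open to recover the arc.
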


We start with a lemma concerning the support of cohomology classes:

\begin{lemma}\label{lem:supp&star}
Let $f\in\C1(S;\Z)$. Suppose $\Sigma$ is a star surface with a boundary component $\gamma$ such that $f([\gamma])\neq 0$.
Then $\supp(f)\cap\rends(\gamma)\neq \emptyset$.
\end{lemma}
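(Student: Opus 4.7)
The plan is to prove the contrapositive: assume $\supp(f) \cap \rends(\gamma) = \emptyset$ and deduce $f([\gamma]) = 0$. By the failure of support at each $e \in \rends(\gamma)$, for every such $e$ there is a flare surface $Y_e$ with $e \in \lends(\partial Y_e)$ and $f|_{\H1(Y_e;\Z)} = 0$. Since $\rends(\gamma)$ is clopen in the compact space $\Ends(S)$ and hence itself compact, I would extract a finite subcover, producing flare surfaces $Y_{e_1}, \ldots, Y_{e_n}$ with $\rends(\gamma) \subset \bigcup_{i=1}^n \lends(\partial Y_{e_i})$.

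Next I would enlarge the given star surface to a compact star surface $\Sigma_K \subset S$ whose interior contains $\gamma$ together with all of the compact curves $\partial Y_{e_1}, \ldots, \partial Y_{e_n}$; such an enlargement is standard. The central geometric observation is then: any complementary component $U$ of $\Sigma_K$ satisfying $\Ends(U) \subset \rends(\gamma)$ is contained as a subsurface in some $Y_{e_i}$. Indeed, $U$ is disjoint from each $\partial Y_{e_i}$, so it lies on exactly one side of each; picking any end $e \in \Ends(U) \subset \bigcup_i \lends(\partial Y_{e_i})$ forces $U$ to be on the flare-side of the corresponding $\partial Y_{e_i}$. Consequently the image of $\H1(U;\Z) \to \H1(S;\Z)$ is contained in $\H1(Y_{e_i};\Z)$, and in particular $f([\partial U]) = 0$.

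Finally, let $X$ be the complementary component of $\gamma$ with $\Ends(X) = \rends(\gamma)$. The compact subsurface $X \cap \Sigma_K$ has boundary consisting precisely of $\gamma$ together with the curves $\partial U$ for those complementary components $U$ of $\Sigma_K$ contained in $X$. Using that this boundary is null-homologous in $\H1(S;\Z)$, with orientations adjusted so that $X \cap \Sigma_K$ lies to the left, one obtains
\[
[\gamma] = -\sum_{U \subset X} [\partial U] \quad \text{in } \H1(S;\Z).
\]
Applying $f$ then yields $f([\gamma]) = 0$, contradicting the hypothesis. I expect the main subtle step to be the geometric containment $U \subset Y_{e_i}$: this requires that the $\partial Y_{e_i}$ be trapped inside $\Sigma_K$, so that each complementary component of $\Sigma_K$ is automatically confined to a single side of every $\partial Y_{e_i}$.
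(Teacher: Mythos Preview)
Your proof is correct and takes a genuinely different route from the paper's. The paper argues directly: it iteratively pushes the curve deeper into $\rends(\gamma)$ using an exhaustion by star surfaces, at each stage exploiting the relation that the boundary classes of a compact piece sum to zero to find a further-out curve $\gamma_n$ with $f([\gamma_n])\neq 0$. Either this process terminates at a loop around a puncture (which then lies in $\supp(f)$), or the $\gamma_n$ escape every compact set and any end in $\bigcap_n \rends(\gamma_n)$ is forced into $\supp(f)$. Your contrapositive-plus-compactness argument is more global: you invoke the failure of support at each end of $\rends(\gamma)$ to produce flares on which $f$ vanishes, pass to a finite subcover using compactness of $\Ends(S)$, trap the finitely many flare boundaries inside one enlarged star surface, and then a single boundary relation kills $f([\gamma])$. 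Your approach uses the definition of $\supp(f)$ more transparently and finishes in one stroke; the paper's iterative method is more hands-on and avoids an explicit compactness extraction, at the cost of a slightly delicate convergence step at the end.
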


\begin{proof}
Fix an exhaustion by star surfaces $F_n$.

Denote by $X_{\gamma}$ the component of $S\ssm\gamma$ to the right of $\gamma$. 
Choose \( N \) so that \( \gamma \) is contained in the interior of \( F_N \).
Then $\Sigma_1:=F_N\cap X_{\gamma}$ is a star surface with $\Sigma\cap\Sigma_1=\gamma$. 
The sum of the classes of the boundary components and of the curves surrounding one puncture of $\Sigma_1$ is zero in homology. So there must be one such curve $\gamma_1$, different from $\gamma$, which satisfies $f([\gamma_1])\neq 0$.

If $\rends(\gamma_1)$ is a single puncture, we are done. Otherwise we can repeat the process with $\Sigma_1$ and $\gamma_1$ instead of $\Sigma$ and $\gamma$.

If we find a $\Sigma_n$ and a curve $\gamma_n\subset \Sigma_n$ with $\rends(\gamma_n)=\{e\}$, we conclude as above that $e\in\supp(f)\cap\rends(\gamma_n)\subset\supp(f)\cap\rends(\gamma)$.

Otherwise, we get an infinite sequence of surfaces and a sequence curves $\gamma_n$ going to infinity, and hence accumulating to an end $e$ in $\rends(\gamma)$, on which $f$ is non-zero.
Thus $e\in\supp(f)$.
\end{proof}

An easy consequence of the previous lemma is the following:
\begin{cor}\label{cor:max2boundaries}
If $f\in\C1(S;\Z)$ has $\supp(f)=\{e_1,e_2\}$, then for any star surface $\Sigma$ there are at most two boundary curves $\gamma_1$ and $\gamma_2$ on which $f$ is  non-zero. \qed
\end{cor}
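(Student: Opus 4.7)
The plan is to deduce Corollary \ref{cor:max2boundaries} directly from Lemma \ref{lem:supp\&star} by a pigeonhole-type argument, exploiting the fact that distinct boundary components of a star surface have disjoint end-sets on their right-hand sides.

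First I would recall that, by definition of a star surface $\Sigma$, each boundary component is separating in $S$ and each complementary component is unbounded. Consequently, if $\gamma_1, \ldots, \gamma_k$ are the boundary components of $\Sigma$, and $X_i$ denotes the complementary component of $\Sigma$ lying to the right of $\gamma_i$ (so that $\partial X_i = \gamma_i$), then the $X_i$ are pairwise disjoint and $\rends(\gamma_i) = \Ends(X_i)$ under the natural identification. Since the $X_i$ are disjoint unbounded components of the complement of the compact set $\Sigma$, the sets $\rends(\gamma_1), \ldots, \rends(\gamma_k)$ are pairwise disjoint subsets of $\Ends(S)$ (in fact they partition $\Ends(S)$).

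Next I would assume, for contradiction, that there exist three distinct boundary components $\gamma_{i_1}, \gamma_{i_2}, \gamma_{i_3}$ of $\Sigma$ with $f([\gamma_{i_j}]) \neq 0$ for each $j$. Applying Lemma \ref{lem:supp\&star} to each of them, we obtain for each $j \in \{1,2,3\}$ an end
\[ e'_j \in \supp(f) \cap \rends(\gamma_{i_j}). \]
Since the sets $\rends(\gamma_{i_j})$ are pairwise disjoint by the preceding paragraph, the ends $e'_1, e'_2, e'_3$ are pairwise distinct. Hence $\supp(f)$ contains at least three ends, contradicting the hypothesis that $\supp(f) = \{e_1, e_2\}$.

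No step looks genuinely difficult here: the real content has already been absorbed into Lemma \ref{lem:supp\&star}, which guarantees that non-vanishing of $f$ on the class of a separating boundary curve forces an end of $\supp(f)$ on the side bounded off. The only point to verify with some care is the disjointness of the right-end-sets of distinct boundary components of a star surface, and this is immediate from the star surface definition (connected complementary components, each unbounded). Thus the corollary reduces to a short pigeonhole argument, and no further technical obstacle is anticipated.
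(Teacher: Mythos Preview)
Your argument is correct and is exactly the intended one: the paper states the corollary as an immediate consequence of Lemma~\ref{lem:supp&star} and gives no further proof, and your pigeonhole reasoning is precisely how that deduction goes. One tiny inaccuracy (harmless for the proof): the sets $\rends(\gamma_i)$ need not \emph{partition} $\Ends(S)$, since a star surface may have punctures of its own; but pairwise disjointness, which is all you use, holds for the reason you give.
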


We will also need a characterization of intersection with arcs in the case of finite-type surfaces.
It relies on the following lemma.
\begin{lemma}\label{lem:realise-scc}
Suppose that $\Sigma$ is a closed surface of finite type and that $\alpha$ is a simple closed curve.
If $f$ is a cohomology class on $\Sigma$ with $f([\alpha]) = 1$, then there is a simple closed curve $\beta$ so that 

\begin{enumerate}
\item $f(x) = \ai(x, [\beta])$ for all $x\in \H1(\Sigma;\Z)$, and
\item $\alpha$ and $\beta$ intersect in a single point.
\end{enumerate}
\end{lemma}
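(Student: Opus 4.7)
The plan is to use Poincaré duality to recast the problem in homology and then realize the relevant class by a curve meeting $\alpha$ exactly once. Since $\Sigma$ is a closed oriented surface of finite type, the pairing $\ai\co\H1(\Sigma;\Z)\times\H1(\Sigma;\Z)\to\Z$ is non-degenerate, so I would first produce the unique class $b\in\H1(\Sigma;\Z)$ with $f(x)=\ai(x,b)$ for all $x$. The hypothesis $f([\alpha])=1$ becomes $\ai([\alpha],b)=1$, and Lemma \ref{lem:detectingnonsep1} then says that $[\alpha]$ and $b$ are both simple non-isotropic; in particular $\alpha$ is non-separating and $b$ is primitive. Condition (1) of the lemma follows at once from uniqueness of Poincaré duals as soon as I exhibit a simple closed curve $\beta$ with $[\beta]=b$ meeting $\alpha$ in a single point.

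To find $\beta$, I would cut $\Sigma$ along $\alpha$ to obtain a compact surface $\Sigma'$ of genus $g-1$ with two boundary circles $\alpha^+$ and $\alpha^-$, and denote the gluing map by $q\co\Sigma'\to\Sigma$. Simple closed curves in $\Sigma$ transverse to $\alpha$ and meeting it in exactly one point correspond under $q$ to properly embedded simple arcs in $\Sigma'$ joining $\alpha^+$ to $\alpha^-$. Excision and the long exact sequence of the pair $(\Sigma,\alpha)$ give
\[
\H1(\Sigma',\partial\Sigma';\Z)\;\cong\;\H1(\Sigma,\alpha;\Z)\;\cong\;\H1(\Sigma;\Z)/\Z[\alpha],
\]
so $b$ descends to a relative class $b'\in\H1(\Sigma',\partial\Sigma';\Z)$ whose boundary, thanks to $\ai([\alpha],b)=1$, is exactly $[\alpha^+]-[\alpha^-]\in H_0(\partial\Sigma';\Z)$.

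The remaining step is to realize $b'$ by a properly embedded simple arc $A$ from $\alpha^+$ to $\alpha^-$ in $\Sigma'$. Since $b$ is primitive and $\partial b'$ takes the value $\pm 1$ on the two relevant boundary components, $b'$ should be representable by a single simple arc. I would check this by capping $\alpha^+$ and $\alpha^-$ with disks and marking a point in each, applying Theorem \ref{thm:mp} to realize the corresponding closed-up class on the resulting closed surface by a simple closed curve through the two marked points, and then reopening the disks to turn that curve back into an arc. Gluing $A$ back via $q$ then produces the desired $\beta=q(A)$, with $[\beta]=b$ and $i(\alpha,\beta)=1$.

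The step I expect to require the most care is precisely this arc realization. An alternative I considered is to skip the cutting and invoke Theorem \ref{thm:mp} directly on $\Sigma$ to obtain some simple closed curve $\beta_0$ with $[\beta_0]=b$, and then reduce $i(\alpha,\beta_0)$ from its initial odd value down to $1$ by successive innermost-bigon surgeries along $\alpha$. This route is conceptually lighter, but making sure that each surgery preserves both connectedness of the curve and its homology class is exactly the bookkeeping that the cut-and-reglue approach is designed to bypass.
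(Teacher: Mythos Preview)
Your opening is exactly the paper's: use non-degeneracy of $\ai$ on the closed surface to produce $b\in\H1(\Sigma;\Z)$ with $f=\ai(\cdot,b)$, note that $\ai([\alpha],b)=1$ forces $b$ to be primitive, and invoke Meeks--Patrusky (Theorem~\ref{thm:mp}) to realize $b$ by a simple closed curve, giving (1). The divergence is entirely in (2). The paper does not cut along $\alpha$; it simply observes that the argument of \cite[Theorem~6.4]{FM_Primer} (the inductive geometric realization of a symplectic basis) lets one choose the realizing curve $\beta$ to meet the already-given $\alpha$ exactly once.

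Your cut-and-reglue route has a genuine gap at the arc-realization step. After capping $\alpha^\pm$ to get the closed surface $\hat\Sigma'$, the relative class $b'\in\H1(\Sigma',\partial\Sigma';\Z)$ does not canonically determine a class in $\H1(\hat\Sigma';\Z)$: it lives in $\H1(\hat\Sigma',\{p_+,p_-\};\Z)$, and the exact sequence $0\to\H1(\hat\Sigma';\Z)\to\H1(\hat\Sigma',\{p_+,p_-\};\Z)\to\Z\to 0$ has no preferred splitting, so there is no well-defined ``closed-up class'' to feed into Theorem~\ref{thm:mp}. Even after choosing a lift, Theorem~\ref{thm:mp} yields a simple closed curve with no control over whether it passes through $p_+$ and $p_-$; arranging that is the original problem back again. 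Your alternative is closer in spirit to what the paper cites, but ``innermost-bigon surgery'' in the usual sense only takes $\alpha,\beta_0$ to minimal position, where $i(\alpha,\beta_0)$ may still exceed $1$. The surgery that actually reduces $i$ --- cutting $\beta_0$ along a subarc of $\alpha$ between adjacent intersections of opposite sign and tracking connectedness and homology --- is precisely the content of \cite[Theorem~6.4]{FM_Primer}, so the cleanest fix is to cite it directly as the paper does.
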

\begin{proof}
The algebraic intersection form $\ai$ is a non-degenerate symplectic form in this case, so there exists a homology class $b$ with $f(x) = \ai(x, b)$ for all $x\in \H1(\Sigma;\Z)$.
Since   $f(\alpha)=1$, the class $b$ is primitive, and can thus (by Theorem \ref{thm:mp}) be realized by a simple closed curve $\beta$, showing (1). 
The fact that $\beta$ can be chosen to intersect $\alpha$ in a single point can be shown as in \cite[Theorem 6.4]{FM_Primer}.
\end{proof}

We can now prove the finite-type version of Proposition \ref{prop:detectingarcs}.

\begin{lemma}\label{lem:finite-arcs}
Let $\Sigma$ be a compact surface and \( \gamma_1, \ldots, \gamma_n \) be its boundary components. Suppose $f\in\C1(\Sigma;\Z)$ is such that there are two indices $i_1, i_2$ for which
\[ f([\gamma_{i_1}]) = -f([\gamma_{i_2}]) = 1,\]
 and
\[ f([\gamma_j]) = 0, \quad\forall j \neq i_1, i_2. \]

Then $f = \ai(\alpha,\cdot)$ for a simple arc $\alpha$ connecting $\gamma_{i_1}$ and $\gamma_{i_2}$.
\end{lemma}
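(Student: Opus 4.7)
The plan is to reduce to the closed-surface statement Lemma \ref{lem:realise-scc} by closing up $\Sigma$ to a closed surface $\hat\Sigma$ in such a way that the prospective arc $\alpha$ becomes a piece of a simple closed curve $\beta$ whose existence is guaranteed by that lemma; the arc $\alpha$ is then recovered by cutting $\beta$ along the ``extra'' region.

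Concretely, I would form $\hat\Sigma$ by gluing a disk $D_j$ to each boundary component $\gamma_j$ with $j\notin\{i_1,i_2\}$ and gluing an oriented annulus $A$, with core curve $c$, whose two boundary components are identified with $\gamma_{i_1}$ and $\gamma_{i_2}$ in the unique way making $\hat\Sigma$ oriented. The kernel of the inclusion-induced map $\iota_*\co\H1(\Sigma;\Z)\to\H1(\hat\Sigma;\Z)$ is generated by the classes $[\gamma_j]$ with $j\notin\{i_1,i_2\}$ together with $[\gamma_{i_1}]+[\gamma_{i_2}]$, and by hypothesis $f$ vanishes on each of these generators. Hence $f$ descends to a cohomology class $\hat f\in\C1(\hat\Sigma;\Z)$, and $\hat f([c])=\pm 1$ because $[c]=\pm[\gamma_{i_1}]$ in $\hat\Sigma$; orient $c$ so that $\hat f([c])=1$.

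Now Lemma \ref{lem:realise-scc} applied to $(\hat f,c)$ supplies a simple closed curve $\beta\subset\hat\Sigma$ satisfying $\hat f(x)=\ai(x,[\beta])$ for all $x\in\H1(\hat\Sigma;\Z)$ and $|\beta\cap c|=1$. Isotope $\beta$ off each disk $D_j$ (any arc of $\beta\cap D_j$ bounds a disk in $D_j$ and can be pushed out) and simplify $\beta\cap A$ so that it consists of a single essential arc crossing $c$ once: peripheral components of $\beta\cap A$ can be isotoped into $\Sigma$, and in minimal position the number of essential components equals $|\beta\cap c|=1$. Setting $\alpha:=\beta\cap\Sigma$ then yields a simple arc in $\Sigma$ with one endpoint on $\gamma_{i_1}$ and one on $\gamma_{i_2}$. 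To verify the cohomological identity, represent an arbitrary $x\in\H1(\Sigma;\Z)$ by a loop $\ell$ in the interior of $\Sigma$: since the complement of $\alpha$ in $\beta$ lies in $A$, the loop $\ell$ meets $\beta$ only along $\alpha$, so $f(x)=\hat f(x)=\ai([\ell],[\beta])=\pm\ai(\alpha,x)$. Reversing the orientation of $\alpha$ if necessary gives $f=\ai(\alpha,\cdot)$.

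The step I expect to require the most care is the topological cleanup of $\beta$ in $\hat\Sigma$: one must confirm that $\beta$ really can be arranged so that $\beta\cap A$ is a single essential arc and $\beta$ misses every $D_j$, all without destroying the property $|\beta\cap c|=1$. This is a standard innermost-disk and bigon argument in the disks and in the annulus, but it is the one nonalgebraic ingredient of the proof. Tracking orientations -- both when writing $[c]=\pm[\gamma_{i_1}]$ and when comparing $\ai(\cdot,[\beta])$ with $\ai(\alpha,\cdot)$ -- is a secondary bookkeeping matter that is cleanly resolved at the end by possibly reversing the orientation of $c$, and hence of $\beta$ and $\alpha$.
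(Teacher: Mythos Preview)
Your proposal is correct and follows essentially the same route as the paper: close up $\Sigma$ (the paper identifies $\gamma_{i_1}$ with $\gamma_{i_2}$ directly rather than via an annulus, and caps the remaining boundaries), push $f$ to the closed surface, invoke Lemma~\ref{lem:realise-scc} against the curve coming from $\gamma_{i_1}$ (your core $c$), then cut the resulting $\beta$ back open to an arc. One small wording issue: since $\iota_*$ is not surjective (there is an extra $\Z$ summand dual to $c$), $f$ does not literally ``descend'' to $\hat f$; it descends to the image of $\iota_*$ and must then be \emph{extended} to all of $\H1(\hat\Sigma;\Z)$---the paper makes this explicit by setting the extension to $0$ on the extra summand, and you should too.
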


\begin{proof}
  Let $F$ be the surface obtained from $\Sigma$ by gluing $\gamma_{i_1}$ and $\gamma_{i_2}$. We can describe the homology of $F$ as follows:
  
  \[ \H1(F;\Z) = \Z\oplus \H1(\Sigma;\Z) / \langle [\gamma_{i_1}] +
    [\gamma_{i_2}] \rangle, \] where the obvious map from $\Sigma$ to
  $F$ corresponds to the quotient map
  $$\H1(\Sigma;\Z) \to \H1(\Sigma;\Z) / \langle [\gamma_{i_1}] +
  [\gamma_{i_2}] \rangle.$$

  By the first assumption, the cohomology class $f$ on $\Sigma$ descends to a
  form on
 $\H1(\Sigma;\Z) / \langle [\gamma_{i_1}] +
  [\gamma_{i_2}] \rangle$, and we extend this to a form $f_F$ on
  $\H1(F;\Z)$ by letting it be $0$ on the summand $\Z$.

  Let $S$ be the closed surface obtained by gluing a disc $D_j$
  to the boundary component $\gamma_j$ of $F$ for each
  $j\neq i_1,i_2$.   Observe that
  \[ \H1(S;\Z) = \H1(F;\Z) / \langle [\gamma_j], j\neq i_1,i_2
    \rangle \] 
   By the second assumption on $f$, the class $f_F$
  descends to a cohomology class $f_S$ on $S$. Observe that
  $f_S(\gamma_{i_1}) = 1$, and hence we can apply
  Lemma~\ref{lem:realise-scc} to find a curve $\bar{\beta}$
  which intersects $\gamma_{i_1}$ in a single point, and which satisfies
  \[ \iota(x,\bar{\beta}) = f_S(x), \quad\forall x \in
    \H1(S;\Z). \]
 We may assume that $\bar{\beta}$
  is disjoint from all the discs $D_j$, and therefore defines a curve
  $\beta\subset F$, which now has
  \[ \iota(x,\beta) =f_F(x), \quad\forall x \in \H1(F;\Z). \]
  and still intersects $\gamma_{i_1}$ in a single point.

  The preimage of $\beta$ on $\Sigma$ is then the desired arc.
\end{proof}

\begin{proof}[Proof of Proposition \ref{prop:detectingarcs}]
In this proof, we will allow subsurfaces to have boundary components homotopic to punctures and  to be annuli with (both) boundary curves homotopic to a puncture.

Consider an isotropic class $x$ such that $f(x)=1$ and let $\Sigma_0$ be a compact star surface such that $x\in\H1(\Sigma_0,\Z)$.
Let $\gamma_0^1,\dots \gamma_0^m$ be the boundary components of $\Sigma_0$.

Choose a compact exhaustion $F_n$ such that $\Sigma_0=F_0$ and no two boundary components are homotopic, unless they are homotopic to a single puncture.

By Corollary \ref{cor:max2boundaries}, there are at most two boundary components of $\Sigma_0$, say $\gamma_0^1$ and $\gamma_0^2$, on which $f$ is not zero.
Since the sum of the boundary components of \( \Sigma_0 \)  is zero in homology, $f(\gamma_0^1)=-f(\gamma_0^2)$. 
Furthermore $\{[\gamma_0^i]\st i=1,\dots, m-1\}$ is a basis of the isotropic part, hence there are $c_i\in\Z$ with
$$x=\sum_{i=1}^{m-1}c_i[\gamma_0^i].$$
Applying $f$ we get
$$1=c_1 f(\gamma_0^1)+c_2 f(\gamma_0^2)=(c_1-c_2)f(\gamma_0^1)$$
which implies that $f(\gamma_0^1)=\pm 1$ and $f(\gamma_0^2)=\mp 1$.
Note that by Lemma \ref{lem:supp&star} (and up to changing the labels of the end in the support of $f$), we have $e_i\in \lends(\gamma_0^i)$.
As $f$ has support only in two ends, up to enlarging $\Sigma_0$ we can assume that $f\neq 0$ only on $\Sigma_0$ union the two connected components of $S\smallsetminus \Sigma_0$ which have $\gamma_0^1$ or $\gamma_0^2$ in their boundary, i.e.\ those containing $e_1$ or $e_2$.
Now apply Lemma \ref{lem:finite-arcs} to get that $f\vert_{\Sigma_0}=\ai(\alpha_0,\cdot)$ for some simple arc $\alpha_0\subset \Sigma_0$ connecting $\gamma_0^1$ to $\gamma_0^2$.
Denote by $X_0^i$ the flare surface to the right of $\gamma_0^i$.

\begin{figure}[h]
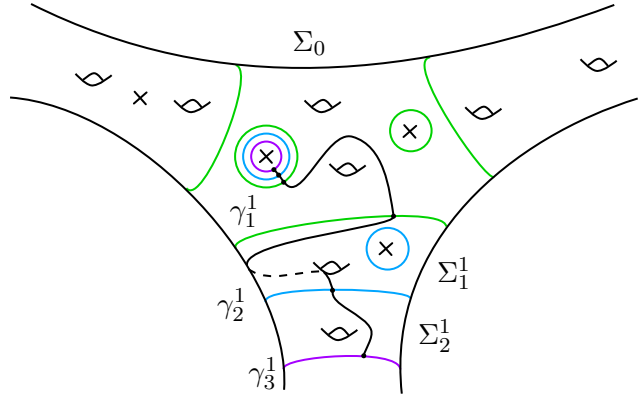

\begin{overpic}{connectingarcs}
\put(45,55){$\Sigma_0$}
\put(35,28){$\gamma_1^1$}
\put(68,18){$\Sigma_1^1$}
\put(33,13){$\gamma_2^1$}
\put(65, 8){$\Sigma_2^1$}
\put(38,2){$\gamma_3^1$}
\end{overpic}
\caption{An example of the sequence of subsurfaces constructed in the proof of Proposition \ref{prop:detectingarcs}}
\end{figure}

Consider $\Sigma_1^i:=F_1\cap X_0^i$ for $i=1,2$ and if necessary enlarge them so that $f$ is zero on all components of $S\smallsetminus (\Sigma_0\cup\Sigma_1^1\cup\Sigma_1^2)$ not containing $e_1$ or $e_2$.
Repeat the argument to get that $f$ is given by  intersection in arcs $\alpha_1^i$ in these subsurfaces.
Slide the endpoints on the boundary components so that the arcs can be glued to $\alpha_0$; the resulting arc defines $f$ on the union of the three star surfaces.

Repeat the process to get that $f$ is represented by $\ai(\alpha,\cdot)$ on a (countably infinite) union of star surfaces, where $\alpha$ is a simple arc joining $e_1$ and $e_2$. By construction, $f$ is zero outside of the union, hence $f=\ai(\alpha,\cdot)\in\C1(S;\Z)$. 
\end{proof}

\bibliographystyle{alpha}
\bibliography{references_MCGhomology}

\end{document}